\documentclass[10.5pt]{article}
\usepackage{color}
\usepackage{xcolor}
\usepackage{amsfonts}

\usepackage{graphicx}
\usepackage{amsmath,enumerate}
\usepackage{cases}
\usepackage{amsthm}
\usepackage{amssymb}
\usepackage{amsfonts}
\usepackage{latexsym,amssymb,amsfonts}
\usepackage{mathrsfs}
\usepackage{graphicx}
\usepackage{psfrag}
\usepackage{amsmath, amsbsy}
\usepackage{amsopn, amstext}
\usepackage{amsmath,multicol,amsopn}
\usepackage{latexsym,amssymb}
\usepackage{hyperref}
\usepackage{array}
\usepackage{booktabs}
\usepackage{tabularx}
\usepackage{float}
\usepackage{lineno}%  line numbers  %>>> ISSUE: Line 22: garbled comment - should be "% line numbers"
\def\dbF{{\mathbb{F}}}

\def\dbN{{\mathbb{N}}}

\def\dbR{{\mathbb{R}}}

\def\dbV{{\mathbb{V}}}

\def\e{\varepsilon}

\def\f{\varphi}

\def\3n{\negthinspace \negthinspace \negthinspace }
\def\2n{\negthinspace \negthinspace }
\def\1n{\negthinspace }
\def\ns{\noalign{\smallskip} }

\def\ds{\displaystyle}
\def\cC{{\cal C}}
\def\cD{{\cal D}}

\def\cF{{\cal F}}

\def\cJ{{\cal J}}

\def\cU{{\cal U}}
\def\cV{{\cal V}}

\def\mE{{\mathbb{E}}}

\def\ss{\smallskip}

\def\q{\quad}
\def\qq{\qquad}

\def\liminf{\mathop{\underline{\rm lim}}}

\def\wt{\widetilde}
\def\cd{\cdot}

\def\as{\hbox{\rm a.s.{ }}}

\def\({\Big (}
\def\){\Big )}
\def\[{\Big[}
\def\]{\Big]}

\def\={\buildrel \triangle \over =}

\def\ee{\end{equation}}
\def\bea{\begin{eqnarray}}
\def\eea{\end{eqnarray}}
\def\bt{\begin{theorem}}
\def\et{\end{theorem}}
\def\bc{\begin{corollary}}
\def\ec{\end{corollary}}
\def\bl{\begin{lemma}}
\def\el{\end{lemma}}
\def\bp{\begin{proposition}}
\def\ep{\end{proposition}}
\def\br{\begin{remark}}
\def\er{\end{remark}}
\def\ba{\begin{array}}
\def\ea{\end{array}}
\def\bde{\begin{definition}}
\def\ede{\end{definition}}

\newtheorem{lemma}{Lemma}[section]
\newtheorem{remark}{Remark}[section]
\newtheorem{example}{Example}[section]

\newtheorem{theorem}{Theorem}[section]
\newtheorem{corollary}{Corollary}[section]

\newtheorem{definition}{Definition}[section]
\newtheorem{proposition}{Proposition}[section]

\newtheorem{assumption}{Assumption}

\def\punct{}
\newtheoremstyle{dotless}{}{}{\rm}{}{\bf}{\punct}{.5em}{}
\theoremstyle{dotless}

\allowdisplaybreaks[4]
\makeatletter

\@addtoreset{equation}{section}
\makeatother

\title{\bf Stackelberg Stochastic Linear-Quadratic Differential Games: A Closed-Loop Equilibrium Approach
}

\author{
Qi L\"{u}\thanks{School
of Mathematics, Sichuan University, Chengdu
610064, China. (Email: {\tt lu@scu.edu.cn}). This author is supported in part by the NSF of China under grant  12025105.}
~~~~
Bowen Ma\thanks{School of Mathematical Sciences, Chengdu University of Technology, Chengdu 610059, China
(Email: {\tt albertmabowen@gmail.com}).  This author is supported in part by  Sichuan Science and Technology Program under 2026NSFSC0779.}
~~~~
Hanxiao Wang\thanks{School of Mathematical Sciences, Shenzhen University, Shenzhen
518060, China (Email: {\tt hxwang@szu.edu.cn}).  This author is supported in part by the  NSF of China under grant 12522121.}
}

\date{ }

\begin{document}
\maketitle

\vspace{-5mm}

\begin{abstract}
This paper addresses a Stackelberg stochastic linear-quadratic (LQ) differential game under closed-loop information, a problem inherently time-inconsistent. Existing approaches rely on solving two coupled Hamilton-Jacobi-Bellman (HJB) equations derived via time discretization and a limiting argument, whose convergence remains an open problem (see Bensoussan et al. \cite{Bensoussan-Chen-Chutani-Sethi-Siu-Yam-2019}).

We propose an alternative framework based on closed-loop equilibrium strategies. Under closed-loop information, we reformulate the leader's problem as a forward-backward optimal control problem involving a coupled system of forward stochastic differential equations and backward Riccati equations. Due to the presence of controlled Riccati equations, the leader's problem becomes essentially nonlinear. Using a variational method, we characterize the leader's closed-loop equilibrium strategy and derive the associated equilibrium Riccati equation (ERE).

A key conceptual distinction from \cite{Bensoussan-Chen-Chutani-Sethi-Siu-Yam-2019} is that  the follower adopts a globally optimal strategy against any admissible control of the leader, whereas in \cite{Bensoussan-Chen-Chutani-Sethi-Siu-Yam-2019} the follower's strategy is only locally optimal along the leader's specific equilibrium path. This makes the follower's strategy more robust and the leader's commitment more credible. Interestingly, in our LQ setting, the resulting ERE coincides exactly with the coupled HJB system from the literature, showing that the leader's closed-loop equilibrium strategy is equivalent to the so-called feedback Stackelberg solution studied in \cite{Bensoussan-Chen-Chutani-Sethi-Siu-Yam-2019}. Thus, our framework provides not only an alternative derivation but also a rigorous justification of the limiting argument in \cite{Bensoussan-Chen-Chutani-Sethi-Siu-Yam-2019}.

We establish a priori estimates for the ERE under suitable conditions, covering both one-dimensional and high-dimensional cases, thereby ensuring its global well-posedness for any finite horizon. This significantly extends existing results of Ba\c{s}ar and Olsder \cite{Basar-Olsder-1999} and Huang and Shi \cite{Huang-Shi-2024}, which require either a sufficiently short time horizon or control-independent diffusion. An application to an asset management problem with numerical simulations is provided to illustrate the theoretical results.
\end{abstract}

\section{Introduction}

In this paper, we consider a Stackelberg stochastic differential game in the linear-quadratic (LQ) setting. To formulate the game, we introduce the following conventions. Let $W$ be a standard one-dimensional Brownian motion defined on a complete filtered probability space $(\Omega,\mathcal{F},\mathbb{F},\mathbb{P})$, where $\mathbb{F}=\{\mathcal{F}_t\}_{t\geq 0}$ is the augmented natural filtration generated by $W$. Denote by $L_{\mathcal{F}_t}^2(\Omega;\mathbb{R}^n)$ the space of $\mathcal{F}_t$-measurable, square-integrable $\mathbb{R}^n$-valued random variables. Let $\mathbb{S}^n$ be the subspace of $\mathbb{R}^{n\times n}$ consisting of symmetric matrices, and let $\mathbb{S}_+^n$ denote the subset of $\mathbb{S}^n$ comprising positive semidefinite matrices. We use $\mathcal{C}(\kappa_1,\kappa_2,\dots)$ to denote generic constants depending on $\{\kappa_1,\kappa_2,\dots\}$, and simply write $\mathcal{C}$ when no ambiguity arises.

For any given initial pair $(t, \xi)\in [0,T) \times L_{\mathcal{F}_t}^2(\Omega;\mathbb{R}^n)$, consider the following stochastic differential equation (SDE) controlled by two players (Player $1$ uses control $u$ and Player $2$ uses control $v$):
\vspace{-1mm}
\begin{equation}\label{state}
\!\!\!\begin{cases}\ds
dX(s)=\big(A(s)X(s)+ B_1(s)u(s)+B_2(s) v(s) \big)ds \\ \ns\ds\qq \qq+ \big(C(s)X(s)+ D_1(s)u(s)+ D_2(s)v(s) \big)dW(s), & s\in [t,T],\\
\ns\ds X(t)=\xi.
\end{cases}
\end{equation}
Here $A,C \in L^{\infty}(0,T;\mathbb{R}^{n \times n})$,
$B_1,D_1 \in L^{\infty}(0,T;\mathbb{R}^{n \times m_1})$, and $B_2,D_2 \in L^{\infty}(0,T;\mathbb{R}^{n \times m_2})$ for $n,m_1,m_2\in\dbN$, and the controls $u$ and $v$ belong to the strategy sets $\mathcal{U}$ and $\mathcal{V}$, respectively, which will be specified later.
The cost functional of Player $1$ is
\vspace{-2mm}
\begin{equation}\label{cost-leader}
\mathcal{J}_1(t,\xi;u,v)=\frac{1}{2}\mathbb{E}_t \bigg[ \int_{t}^{T} \Big( \langle Q_1(s)X(s),X(s) \rangle + \langle R_1(s) u(s),u(s) \rangle \Big) ds + \langle G_1 X(T),X(T) \rangle \bigg],%\vspace{-1mm}
\end{equation}
\vspace{-1mm}
and the cost functional of Player $2$ is
\vspace{-1mm}
\begin{equation}\label{cost-follower}
\!\!\!\!\!\!\!\!\!\!\!\!\mathcal{J}_2(t,\xi;u,v)= \frac{1}{2}\mathbb{E}_t \bigg[ \int_{t}^{T} \Big( \langle Q_2(s)X(s),X(s) \rangle + \langle R_2(s) v(s),v(s) \rangle \Big) ds + \langle G_2 X(T),X(T) \rangle \bigg],%\vspace{-1mm}
\end{equation}
where $\mathbb{E}_t$ denotes conditional expectation with respect to $\mathcal{F}_t$, $Q_1,Q_2 \in L^{\infty}(0,T;\mathbb{S}^n)$, $R_1 \in L^{\infty}(0,T;\mathbb{S}^{m_1})$, $R_2 \in L^{\infty}(0,T;\mathbb{S}^{m_2})$, and $G_1,G_2 \in \mathbb{S}^n$.

In what follows, we omit the subscript for inner products, as the space is clear from the context, and suppress the time argument whenever no confusion arises.

In the above, Player $1$ (the leader) holds a dominant position by announcing her control $u$ before Player $2$ (the follower). Given the leader's decision, the follower optimally responds with $v^*(u)$. Anticipating this rational reaction, the leader chooses $u^*$ to optimize her own performance along the follower's best-response curve. The pair $(u^*, v^*(u^*))$ constitutes a {\it Stackelberg solution}, summarized as follows:
\vspace{-2mm}
\begin{equation}\label{De-Sta-sol}
\begin{cases}
\mathcal{J}_2(0,x;u,v^*(u)) \leq \mathcal{J}_2(0,x;u,v),\\[4pt]
\mathcal{J}_1(0,x;u^*,v^*(u^*)) \leq \mathcal{J}_1(0,x;u,v^*(u)),
\end{cases}
\qquad \forall (u,v)\in \mathcal{U}\times \mathcal{V}.
\vspace{-1mm}
\end{equation}
This hierarchical solution concept was first introduced by H. von Stackelberg \cite{Stackelberg-1934} for games where some firms dominate others. In such games, the information structure of the strategy sets $\mathcal{U}$ and $\mathcal{V}$ significantly affects the solution. We now describe several types of information structures, present their corresponding Stackelberg solutions, and discuss the resulting time-inconsistency, which motivates the need for alternative solution concepts.

\ss

{\bf Open-loop Stackelberg Solution.} The first one is the so-called {\it (adapted) open-loop information structure}, with control strategy sets defined as follows:
\vspace{-2mm}
\begin{equation}\label{AOL-control}
\begin{cases}
\mathcal{U} = \big\{ u \mid u: \{x\} \times \Omega \times [0,T] \to \mathbb{R}^{m_1} \text{ is an } \mathbb{F}\text{-adapted process} \big\},\\[2pt]
\mathcal{V} = \big\{ v \mid v: \{x\} \times \Omega \times [0,T] \times \mathcal{U} \to \mathbb{R}^{m_2} \text{ is an } \mathbb{F}\text{-adapted process} \big\}.
\end{cases}\vspace{-1mm}
\end{equation}
The associated Stackelberg solution is called the {\it open-loop Stackelberg solution}, first systematically studied by Yong \cite{Yong-2002} for stochastic LQ differential games. Subsequent extensions include asymmetric information \cite{Shi-Wang-Xiong-2016}, jump-diffusion models \cite{Moon-2021}, zero-sum games \cite{Sun-Wang-Wen2021}, multilevel hierarchy \cite{Li-Yu-2018}, and time-delay systems \cite{Xu-Shi-Zhang-2018}.

Under standard positivity assumptions, the maximum principle provides both necessary and sufficient conditions for characterizing the open-loop Stackelberg solution. The resulting optimal Hamiltonian system is a coupled linear forward-backward stochastic differential equation (FBSDE) with four components, whose well-posedness remains largely unresolved. Although a decoupling field can be derived as a Riccati-type ordinary differential equation, its solvability is currently known only in very special cases \cite{Bensoussan-Chen-Sethi-2015,Yong-2002}.

\ss

{\bf Closed-loop Stackelberg Solution.}  The second one is the {\it closed-loop (memoryless) information structure}, under which the admissible control strategy sets are defined as follows: 
\vspace{-2mm}
\begin{equation}\label{CLM-control}
\begin{cases}
\mathcal{U} = \big\{ u \mid u: \{x\} \times \Omega \times [0,T] \times \mathbb{R}^n \to \mathbb{R}^{m_1} \text{ is an } \dbF\text{-adapted process} \big\},\\[6pt]
\mathcal{V} = \big\{ v \mid v: \{x\} \times \Omega \times [0,T] \times \mathbb{R}^n \times \mathcal{U} \to \mathbb{R}^{m_2} \text{ is an } \dbF\text{-adapted process} \big\}.
\end{cases}\vspace{-1mm}
\end{equation}
The key difference from \eqref{AOL-control} is that players now use current state information when choosing controls. This seemingly mild modification substantially complicates the derivation of the Stackelberg solution, now termed the {\it closed-loop Stackelberg solution}.

First, variations in the follower's control $v \in \mathcal{V}$ affect the leader's control $u$, since $u$ depends explicitly on the current state. Thus, when seeking an optimal response, the follower must account for the retroactive effect of her control on the leader's decision. Consequently, the leader faces a nonstandard FBSDE optimal control problem where both $u$ and its state derivative $\partial u/\partial x$ enter the state dynamics.

Second, Bensoussan, Chen, and Sethi \cite{Bensoussan-Chen-Sethi-2015} reformulate this problem as a classical one by restricting the leader's control to the affine form $u(t,x)=u_1(t)+u_2(t)x$, with $u_2$ satisfying a priori bounds. Under this approach, the maximum principle yields a nonlinear FBSDE with four components, whose well-posedness remains largely unresolved. Indeed, closed-loop Stackelberg solutions have long been recognized as intrinsically difficult to analyze; see, e.g., Ba\c{s}ar and Olsder \cite{Basar-Olsder-1999}, Simaan and Cruz \cite{Simaan-Cruz-1973}, and Hern\'{a}ndez et al. \cite{Hernandez-etal-2024}.

\vspace{-2mm}

\subsection{Time-inconsistency}
\vspace{-1mm}

In both open-loop and closed-loop Stackelberg solutions, the leader commits to a strategy at the initial time and dominates the entire game horizon. However, as early as 1973, Simaan and Cruz \cite{Simaan-Cruz-1973} observed in a discrete-time example that the closed-loop Stackelberg solution is time-inconsistent: a solution derived at the initial time may fail to remain a Stackelberg solution at later times. Papavassilopoulos and Cruz \cite{Papavassilopoulos-Cruz-1979} later investigated this phenomenon in continuous-time settings.

To illustrate the issue, we construct Example \ref{example1.1} below. But first, we provide a mathematical definition of time-inconsistency in Stackelberg games.

Let $(u^*, v^*)$ be a Stackelberg solution for the game starting at time $0$ with initial state $x_0$. For any intermediate time $t \in (0, T)$, consider the same game starting at $(t, X^*(t))$, where $X^*$ is the state trajectory induced by $(u^*, v^*)$. Let $(\tilde{u}^*, \tilde{v}^*)$ be a Stackelberg solution for this subgame. The original solution $(u^*, v^*)$ is called time-consistent if
\vspace{-1mm}
$$
\big(u^*(s, \cdot), v^*(s, \cdot, u^*)\big) = \big(\tilde{u}^*(s, \cdot), \tilde{v}^*(s, \cdot, \tilde{u}^*)\big) \quad \text{for almost all } s \in [t, T],\vspace{-1mm}
$$
otherwise it is  time-inconsistent.

Equivalently, from a dynamic programming perspective, time-inconsistency means that the Bellman optimality principle fails: the optimality of a global strategy does not imply the optimality of its restriction to any subinterval.
We now present a concrete example illustrating this phenomenon.%\vspace{-2mm}
\begin{example}\label{example1.1}
Consider the Stackelberg LQ game \eqref{state}--\eqref{cost-follower} with $C=D_1=D_2=0$ and $A,B_1,B_2,Q_1,Q_2$, $R_1,R_2,G_1,G_2$ being constants. We assume that $\min\{R_1,R_2,G_1,G_2,Q_1\}>0$ and $0 \not\in \{B_1,B_2,x_0\}$. By the maximum principle derived in \cite{Bensoussan-Chen-Sethi-2015}, we can show the time-inconsistency under both open-loop and closed-loop information structures as follows:

1. {\bf Open-loop Stackelberg solution.}
From \cite[Theorem 5.1]{Bensoussan-Chen-Sethi-2015}, $(u^*,v^*)$ is an open-loop Stackelberg solution over $[0,T]$ if and only if the following FBSDE admits a solution
\vspace{-1mm}
\begin{equation}\label{optimal-Hamiltonian-open}
\begin{cases}
\dot{x}^* =Ax^* -B_1^2R_1^{-1}p^*_1 -B_2^2R_2^{-1}p^*_2, &\mbox{ in } [0,T],\\
\dot{p}^*_2 =-A p^*_2 -Q_2x^*, &\mbox{ in } [0,T],\\
\dot{y}^* =Ay^* +B_2^2R_2^{-1}p^*_1, &\mbox{ in } [0,T],\\
\dot{p}^*_1 =-Ap^*_1 +Q_2y^*(t)-Q_1x^*, &\mbox{ in } [0,T], \\
x^*(0)=x_0,\qq y^*(0)=0,\\
p^*_1(T)=-G_2y^*(T)+G_1x^*(T),\qq p^*_2(T)=G_2x^*(T),
\end{cases}%\vspace{-1mm}
\end{equation}
with $(u^*,\,v^*)=-(R_1^{-1}B_1p^*_1,\,R_2^{-1}B_2p^*_2)$.
\ss

2. {\bf Closed-loop Stackelberg solution.}
From \cite[Subsection 5.2]{Bensoussan-Chen-Sethi-2015}, we know that the leader will lose nothing if she chooses the affine function $u(t,x)=u_2(t)x+u_1(t)$, and to ensure that the Hamiltonian is finite we need to impose an a priori bound $|u_2|\leq K$.
Then if $\big((u_1^*,u_2^*)^\top,v^*\big)$ is a closed-loop Stackelberg solution, it  satisfies the following necessary condition:
\vspace{-1mm}
\begin{equation}\label{optimal-Hamiltonian-closed1}
u_1^*= u_2^*x^*-R_1^{-1}B_1p_1^*,\qq u_2^*={\rm sgn}(B_1y^*p_2^*)K,\qq v^*=-R_2^{-1}B_2p_2^*,
\end{equation}
\vspace{-3mm}
\begin{equation}\label{optimal-Hamiltonian-closed}
\begin{cases}
\dot{x}^* =(A+B_1u_2^*)x^* +B_1u^*_1-B_2^2R_2^{-1}p^*_2, &\mbox{ in } [0,T],\\
\dot{p}^*_2=-(A+B_1u_2^*) p^*_2 -Q_2x^*,  &\mbox{ in } [0,T],\\
\dot{y}^*=(A+B_1u_2^*)y^* +B_2^2R_2^{-1}p^*_1,  &\mbox{ in } [0,T],\\
\dot{p}^*_1=-(A+B_1u_2^*)p^*_1 +Q_2y^* -Q_1x^* -R_1u_2^* (u_2^* x^* +u_1^*), &\mbox{ in } [0,T], \\
x^*(0)=x_0,\qq y^*(0)=0,\\
p^*_1(T)=-G_2y^*(T)+G_1x^*(T),\qq p^*_2(T)=G_2x^*(T),
\end{cases}
\end{equation}
with\vspace{-1mm}
$$
{\rm sgn}(x)=\begin{cases}
1& \text{if } x>0,\\0 & \text{if } x=0,\\ -1 &\text{if }x<0.
\end{cases}\vspace{-1mm}
$$

For both cases, suppose for contradiction that time-consistency holds. At any $\tilde t \in (0,T)$, restart the game from $x^*(\tilde t)$ and let $(\tilde x^*,\tilde p_2^*,\tilde y^*,\tilde p_1^*)$ with $(\tilde u^*,\tilde v^*)$ (or $((\tilde u_1^*,\tilde u_2^*)^\top,\tilde v^*)$) be the corresponding solution over $[\tilde t,T]$. Time-consistency then implies $u^* = \tilde u^*$ and $x^* = \tilde x^*$ on $[\tilde t,T]$ (or $u_1^* = \tilde u_1^*$, $u_2^* = \tilde u_2^*$, $x^* = \tilde x^*$), hence $p_1^* = \tilde p_1^*$ on $[\tilde t,T]$ and consequently $y^*(T) = \tilde y^*(T)$. Since $y^*$ and $\tilde y^*$ satisfy the same equation with the same terminal condition, we obtain $\tilde y^* = y^*$ on $[\tilde t,T]$, yielding $y^*(\tilde t) = 0$. By the arbitrariness of $\tilde t \in (0,T)$, $y^* \equiv 0$ on $[0,T]$. Substituting this into the equation for $y^*$ gives $p_1^* \equiv 0$ on $[0,T]$, and thus $u^* \equiv 0$ (or $u_1^* = u_2^* \equiv 0$) on $[0,T]$. Finally, from the equation for $p_1^*$, we obtain $x^* \equiv 0$ on $[0,T]$, contradicting $x_0 \neq 0$.
\end{example}

To address the time-inconsistency under the open-loop information structure, Moon and Yang \cite{Moon-Yang-2020} derived the corresponding equilibrium Riccati equations via a variational approach. However, the solvability of these equations remains unclear due to the lack of symmetry and the intrinsic complexity of the system.

For time-inconsistency under the closed-loop information structure, Simaan and Cruz \cite{Simaan-Cruz-1973} introduced the notion of a {\it feedback Stackelberg solution} for discrete-time systems. Motivated by dynamic programming (DPP), this solution concept is defined recursively by solving a sequence of static Stackelberg games at each stage in a backward manner. Later, Ba\c{s}ar and Haurie \cite{Basar-Haurie-1984} extended this concept to the continuous-time setting via time discretization and a limiting argument, deriving a system of coupled Hamilton-Jacobi-Bellman (HJB) equations involving a static Stackelberg game at the Hamiltonian level.   Although recognized as essential to the methodology, the justification of the limiting procedure was left as a challenging open problem \cite{Bensoussan-Chen-Chutani-Sethi-Siu-Yam-2019,Bensoussan-Chen-Sethi-2014}. Instead, they bypassed the limiting procedure by introducing the notion of a \textbf{feedback Stackelberg equilibrium}, which is intrinsically time-consistent by definition: if for any $(t,x)\in [0,T]\times \mathbb{R}^n$,
\vspace{-1mm}
\begin{equation}\label{De-feed-Sta}
\begin{cases}
\mathcal{J}_2(t,x;u^*,v^*(u^*)) \leq \mathcal{J}_2(t,x;u^*,v(u^*)),\\[4pt]
\mathcal{J}_1(t,x;u^*,v^*(u^*)) \leq \mathcal{J}_1(t,x;u,v^*(u)),
\end{cases}\qquad \forall (u,v)\in \mathcal{U}\times \mathcal{V},\vspace{-1mm}
\end{equation}
where
\vspace{-1mm}
\begin{equation}\label{feed-Sta-control}
\begin{cases}
\mathcal{U} = \big\{ u \mid u: [0,T]\times \mathbb{R}^n \to \mathbb{R}^{m_1} \text{ and } u(s,x) \text{ is Lipschitz continuous in } x \big\},\\[2pt]
\mathcal{V} = \big\{ v \mid v: [0,T] \times \mathbb{R}^n \times \mathbb{R}^{m_1} \to \mathbb{R}^{m_2} \text{ and } v(s,x,u) \text{ is Lipschitz continuous in } (x,u) \big\},
\end{cases}\vspace{-1mm}
\end{equation}
then the pair of control strategies $(u^*,v^*)\in \mathcal{U}\times \mathcal{V}$ is called a \emph{feedback Stackelberg equilibrium}. Nevertheless, the only available approach in the literature for obtaining the feedback Stackelberg equilibrium \eqref{De-feed-Sta} still relies on solving the coupled HJB system mentioned above.

Along this line, Bensoussan, Chen, and Sethi \cite{Bensoussan-Chen-Sethi-2014} studied an infinite-horizon Stackelberg game; Bensoussan et al. \cite{Bensoussan-Chen-Chutani-Sethi-Siu-Yam-2019} investigated a mixed leadership game; and Huang and Shi \cite{Huang-Shi-2024} considered a finite-horizon LQ game where the control enters the diffusion term. However, even in the LQ case, where the HJB system reduces to coupled Riccati equations, solvability is only known when the diffusion is control-independent and $T$ is sufficiently small \cite[Chapter 7.6.2]{Basar-Olsder-1999} \cite[Theorem 2]{Huang-Shi-2024}. When the control enters the diffusion term, the problem becomes much harder and remains largely open \cite[Remark 3]{Huang-Shi-2024}.

Moreover, Definition \eqref{De-feed-Sta} reveals that the follower's strategy is only locally optimal. This is unrealistic, as the leader cannot force the follower to accept a local optimum in place of a global one. A locally optimal strategy also lacks robustness against the leader's choice. Therefore, in contrast to the existing literature on feedback Stackelberg equilibrium, we insist that the follower adopt a globally optimal strategy, as in open-loop or closed-loop Stackelberg solutions. This is the fundamental starting point of our paper.

\vspace{-1mm}

\subsection{Time-inconsistent control approach and closed-loop equilibrium solution}

Based on the above considerations, this paper aims to introduce a new approach to addressing the time-inconsistency of Stackelberg games under a closed-loop information structure. The main idea is motivated by time-inconsistent control theory \cite{Bjork-2017,Ekeland2008,Hu-2012,Hu-2017,Yong-2014,Yong-2017}, building in particular on our previous work on time-inconsistent optimal control for forward-backward stochastic control problems \cite{Lu-2024,Lu-Ma-Wang-2025,Wang-Yong-Zhou-2024}.

The main procedure of our approach can be divided into four steps:

\vspace{-1mm}

\textbf{Step 1.} We assume the leader takes the linear feedback control $u = \Theta_1 X$ with $\Theta_1 \in L^\infty(0,T; \mathbb{R}^{m_1 \times n})$. For each initial pair $(t,\xi) \in [0,T) \times L_{\mathcal{F}_t}^2(\Omega; \mathbb{R}^n)$, the follower faces a standard stochastic LQ optimal control problem, which admits a unique (time-consistent) closed-loop optimal strategy $\Theta_2^*$ that depends on $\Theta_1$. Observe that the follower's strategy $\Theta_2^*$ is optimal for any given $\Theta_1$; hence, it is globally optimal.

\ss

\textbf{Step 2.} After anticipating the follower's rational reaction, we reformulate the leader's problem as the following optimal control problem for a coupled system consisting of a forward SDE and a backward Riccati equation, together with a quadratic cost functional:
\vspace{-2mm}
\begin{eqnarray}\label{SDE-Riccati}
\begin{cases}
\!\!\begin{cases}
dX=[(A+ B_1 \Theta_1 ) X+ B_2 \Theta_2^*(\Theta_1)X ]ds+ [(C+D_1 \Theta_1 )X + D_2 \Theta_2^*(\Theta_1)X ]dW(s),\q \mbox{in }[t,T],\\\ns\ds
\dot{P}_2=- ( A+B_1\Theta_1)^\top P_2 - P_2(A+B_1\Theta_1)- (C+D_1\Theta_1)^\top P_2 (C+D_1\Theta_1) - Q_2\\\ns\ds
\hspace{2.5em}+[P_2B_2\!+\! (C\!+\!D_1\Theta_1)^\top P_2 D_2 ][R_2\!+\! D_2^\top P_2 D_2]^{-1} [B_2^\top P_2\!+\! D_2^\top P_2 (C\!+\!D_1\Theta_1) ],\q \mbox{in }[t,T],\\
X(t)=\xi,\q P_2(T)=G_2.
\end{cases}\\\ns\ds
\cJ_1(t,\xi; \Theta_1 X,\Theta_2^*(\Theta_1)X)=
\frac{1}{2}\mE_t \Big[ \int_{t}^{T}  \langle  ( Q_1+ \Theta_1^\top R_1 \Theta_1  )X,X \rangle ds +  \langle G_1X(T),X(T) \rangle  \Big],
\end{cases}%\vspace{-1mm}
\end{eqnarray}
where
\begin{equation}\label{In-Theat2}
\Theta_2^*(\Theta_1) = - \big[R_2 + D_2^\top P_2 D_2\big]^{-1} \big[ B_2^\top P_2 + D_2^\top P_2 (C + D_1\Theta_1) \big].%\vspace{-1mm}
\end{equation}
The leader aims to minimize the cost functional $\mathcal{J}_1(t,\xi; \Theta_1 X, \Theta_2^*(\Theta_1) X)$ by choosing $\Theta_1$ in the space $L^\infty(0,T; \mathbb{R}^{m_1 \times n})$.

\ss

\textbf{Step 3.} The leader faces an optimal control problem for a nonlinear FBSDE, which is time-inconsistent. To address the leader's time-inconsistency, we introduce the notion of a (linear) closed-loop equilibrium strategy (see Definition \ref{De-closedloop}). By a variational approach together with decoupling techniques, the following equilibrium Riccati equation (ERE) is derived:
\vspace{-1mm}
\begin{align}\label{Intro-ERE}
\!\!\begin{cases}
\dot{P}_2 = - (A + B_1 \bar{\Theta}_1)^\top P_2 - P_2(A + B_1 \bar{\Theta}_1) - (C + D_1 \bar{\Theta}_1)^\top P_2 (C + D_1 \bar{\Theta}_1) - Q_2 \\[2pt]
\qquad + \big[P_2 B_2 + (C + D_1 \bar{\Theta}_1)^\top P_2 D_2 \big] \big[R_2 + D_2^\top P_2 D_2\big]^{-1} \big[B_2^\top P_2 + D_2^\top P_2 (C + D_1\bar{\Theta}_1) \big], \quad \text{in } [0,T],\\[2pt]
\dot{P}_1 = - \big[ \mathbf{A}(\bar{\Theta}_1,P_2)^\top P_1 + P_1 \mathbf{A}(\bar{\Theta}_1,P_2) + \mathbf{C}(\bar{\Theta}_1,P_2)^\top P_1 \mathbf{C}(\bar{\Theta}_1,P_2) + Q_1 + \bar{\Theta}_1^\top R_1 \bar{\Theta}_1 \big], \quad \text{in } [0,T],\\[2pt]
P_2(T) = G_2, \quad P_1(T) = G_1,
\end{cases}\vspace{-1mm}
\end{align}
with
\vspace{-1mm}
\begin{equation}\label{Intro-bar-Theta1}
\!\!\!\bar{\Theta}_1 = \!- \big[R_1 + \mathbf{D}(P_2)^\top\! P_1 \mathbf{D}(P_2)\big]^{-1} \!\big\{ \mathbf{B}(P_2)^\top\! P_1 + \mathbf{D}(P_2)^\top\! P_1 \big[ C - D_2 (R_2 + D_2^{\top} P_2 D_2)^{-1} (B_2^\top P_2 + D_2^\top P_2 C) \big] \big\},\vspace{-1mm}
\end{equation}
which provides an equivalent characterization for the leader's (linear) closed-loop equilibrium strategy (see Theorem \ref{th1}). Combining this closed-loop equilibrium strategy $\bar{\Theta}_1$ with the follower's rational reaction mapping $\Theta_2^*(\bar{\Theta}_1)$ yields a time-consistent solution to the global Stackelberg game under the closed-loop information structure.

\ss

\textbf{Step 4.} By establishing appropriate a priori estimates, we show that the ERE admits a unique solution (Theorems \ref{th3} and \ref{th4}) over an arbitrary time horizon $[0,T]$, provided that any of the following conditions holds:

(i) $D_2 = 0$, with $\operatorname{Rank} (B_2(t)) = n$, and $B_2(\cdot)$ and $R_2(\cdot)$ are continuously differentiable on $[0,T]$;

(ii) $m_1 = m_2 = n = 1$ with $|D_2| > \delta > 0$;

(iii) $m_1 = m_2 = n = 1$ with $D_2 = 0$.

\noindent
Intuitive explanations of the above technical assumptions are also provided. Lastly, we give an application of the obtained results to an asset management problem (Section \ref{sec5}).

\ss

An interesting finding is that the closed-loop equilibrium strategy pair $(\bar{\Theta}_1, \Theta_2^*(\bar{\Theta}_1))$ constitutes a feedback Stackelberg equilibrium in the sense of Definition~\eqref{De-feed-Sta} (see Theorem \ref{th2}). Moreover, the resulting ERE coincides with the coupled Riccati equation derived from the HJB system of a static Stackelberg game at the Hamiltonian level. A detailed comparison between the closed-loop equilibrium strategy \eqref{De-closedloop} and the feedback Stackelberg equilibrium \eqref{De-feed-Sta} is given in Table \ref{Tab1} and Remark \ref{remark3.3}. The key distinction is that in our approach, the follower adopts a globally optimal strategy, whereas in \cite{Bensoussan-Chen-Chutani-Sethi-Siu-Yam-2019} the follower's strategy is only locally optimal, implying stronger robustness for the follower in our framework.

The main technical challenges are twofold. First, because the leader's   system involves a Riccati equation, the leader's problem becomes a nonlinear forward-backward stochastic control problem. Thus, although the original Stackelberg game \eqref{state}--\eqref{cost-leader}--\eqref{cost-follower} is LQ, the leader's problem falls outside the LQ framework, distinguishing our work from existing studies \cite{Bensoussan-Chen-Sethi-2015,Li-Yu-2018,Moon-2021,Shi-Wang-Xiong-2016,Sun-Wang-Wen2021,Xu-Shi-Zhang-2018,Yong-2002}, where the leader's problem is a linear FBSDE optimal control problem. A key point is that we always take the leader's control in state-feedback form. Second, due to singularity and nonlinearity, solvability of the Riccati equation \eqref{Intro-ERE} is largely open \cite{Basar-Olsder-1999,Bensoussan-Chen-Chutani-Sethi-Siu-Yam-2019,Bensoussan-Chen-Sethi-2014,Huang-Shi-2024}. Existing results \cite{Basar-Olsder-1999,Huang-Shi-2024} typically require $T$ to be sufficiently small and the diffusion to be control-independent, allowing a standard fixed-point argument. In contrast, we establish global well-posedness of \eqref{Intro-ERE} (Theorems \ref{th3} and \ref{th4}) under the assumptions in Step 4, significantly improving existing results in two aspects: (1) the short-time constraint is removed; (2) the diffusion can be controlled. The key lies in establishing appropriate a priori estimates.

The rest of this paper is organized as follows. Section~\ref{se2} reformulates the leader's problem under closed-loop information and collects preliminary results. Section~\ref{se3} is divided into two subsections: Subsection~\ref{se3.1} provides an equivalent characterization of the leader's closed-loop equilibrium strategy and derives the corresponding ERE, while Subsection~\ref{se3.2} shows that this strategy together with the follower's rational reaction forms a feedback Stackelberg equilibrium. Section~\ref{se4} establishes global well-posedness of the ERE under suitable conditions. An application is given in Section~\ref{sec5}, and all proofs are collected in Section~\ref{Appendix}.

\vspace{-2mm}

\subsection{Literature review on time-inconsistent control problems}

The earliest mathematical study of time-inconsistent optimal control is due to Strotz \cite{Strotz-1955}, followed by later works \cite{Bjork-2017,Ekeland-Lazrak-2010,Ekeland2008,Hu-2012,Hu-2017,Yong-2014,Yong-2017}. For forward-backward controlled systems, Wei, Yong, and Yu \cite{Wei-Yong-Yu-2017} studied a recursive problem; Moon and Yang \cite{Moon-Yang-2020} considered an LQ mean-field leader-follower Stackelberg game with open-loop information; Wang, Yong, and Zhou \cite{Wang-Yong-Zhou-2024} investigated a forward-backward system with a recursive cost given by a backward stochastic Volterra integral equation; L\"u and Ma \cite{Lu-2024} characterized closed-loop equilibrium strategies for a time-inconsistent forward-backward LQ problem via a nonsymmetric ERE; and L\"u, Ma, and Wang \cite{Lu-Ma-Wang-2025} established a priori estimates for that ERE and proved its well-posedness in the high-dimensional case. However, none of these works can address the leader's problem \eqref{SDE-Riccati}--\eqref{In-Theat2} in Stackelberg stochastic LQ games. This paper aims to resolve this issue.

\vspace{-2mm}
\section{Preliminary}\label{se2}

In Subsection \ref{se-2.1}, we reformulate the leader's control problem under a closed-loop information structure in detail. Subsection \ref{se-2.2} collects some preliminary results that will be used later.

\vspace{-1mm}
\subsection{Reformulation of the leader's problem}\label{se-2.1}

We impose the following assumptions on the coefficients and weighting matrices:
\begin{assumption}\label{H2}
The weighting matrices satisfy
\vspace{-1mm}
$$
Q_1 \geq 0, \quad Q_2 \geq 0, \q G_1\geq 0,\q G_2\geq 0, \quad \text{and} \quad R_1 \geq \delta I, \quad R_2 \geq \delta I,\vspace{-1mm}
$$
for some $\delta > 0$.
\end{assumption}
For each initial pair $(t,\xi) \in [0,T) \times L_{\mathcal{F}_t}^2(\Omega;\mathbb{R}^n)$, we posit that Player 1 (the leader) adopts the following linear feedback control
\begin{equation}\label{se2.1-eq1}
u = \Theta_1 X \quad \text{with} \quad \Theta_1 \in L^\infty(0,T;\mathbb{R}^{m_1 \times n}),
\end{equation}
which is a particular class of control within the closed-loop information structure.
\begin{remark}
One may consider a more general form of the control ansatz $u = \Theta_1 X + \varphi$. Nevertheless, as will be shown later in Theorem \ref{th1}, the particular form \eqref{se2.1-eq1} is sufficient to address the time-inconsistency inherent in Example \ref{example1.1}.
\end{remark}
With \eqref{se2.1-eq1}, the control system becomes
\vspace{-1mm}
\begin{equation}\label{se2.1-state}
\begin{cases}
dX  = \big[(A + B_1 \Theta_1) X  + B_2 v \big] ds + \big[(C + D_1 \Theta_1) X  + D_2 v \big] dW(s), & \mbox{ in } [t,T],\\
X(t) = \xi,
\end{cases}
\end{equation}
and the cost functional for Player 2 (the follower) is given by
\vspace{-1mm}
\begin{equation}\label{se2.1-follower-cost}
\mathcal{J}_2(t,\xi;\Theta_1 X, v) = \frac{1}{2} \mathbb{E}_t \bigg[ \int_t^T \Big( \langle Q_2 X, X \rangle + \langle R_2 v, v \rangle \Big) ds + \langle G_2 X(T), X(T) \rangle \bigg].\vspace{-1mm}
\end{equation}

By the standard theory of stochastic LQ optimal control \cite{Yong-Zhou1999}, the follower's problem is summarized as follows.
\begin{proposition}\label{pro2.1}
Let Assumption \ref{H2} hold and let $\Theta_1 \in L^\infty(0,T;\mathbb{R}^{m_1 \times n})$ be fixed. Then the follower's problem \eqref{se2.1-state}--\eqref{se2.1-follower-cost} admits a unique optimal control
$v^* = \Theta_2^*(\Theta_1) X$,
where
\vspace{-1mm}
\begin{equation}\label{Th2*}
\Theta_2^*(\Theta_1) = - \big[R_2 + D_2^{\top} P_2 D_2 \big]^{-1} \big[ B_2^\top P_2 + D_2^\top P_2 (C + D_1\Theta_1) \big],\vspace{-1mm}
\end{equation}
$X$ is the unique solution of the closed-loop system with the control $v^* = \Theta_2^*(\Theta_1) X$, and $P_2$ (depending on $\Theta_1$) is the unique solution of the following Riccati equation:
\vspace{-1mm}
\begin{equation}\label{P_2}
\begin{cases}
\dot{P}_2 + (A + B_1\Theta_1)^\top P_2 + P_2(A + B_1\Theta_1) + (C + D_1\Theta_1)^\top P_2 (C + D_1\Theta_1) + Q_2 \\[4pt]
\quad - \big[P_2 B_2 + (C + D_1\Theta_1)^\top P_2 D_2 \big] \big[R_2 + D_2^{\top} P_2 D_2 \big]^{-1} \big[B_2^\top P_2 + D_2^\top P_2 (C + D_1\Theta_1)\big] = 0, & \mbox{ in } [0,T],\\[2pt]
P_2(T) = G_2.
\end{cases}\vspace{-1mm}
\end{equation}
Moreover,  
\vspace{-1mm}
\begin{equation*}\label{value-2}
\mathcal{J}_2\big(t,\xi;\Theta_1 X, \Theta_2^*(\Theta_1) X\big) = \frac{1}{2} \langle P_2(t)\xi, \xi \rangle.\vspace{-1mm}
\end{equation*}
\end{proposition}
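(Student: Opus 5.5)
The plan is to reduce the follower's problem to a standard stochastic LQ problem and quote the classical theory from \cite{Yong-Zhou1999}. Once $\Theta_1\in L^\infty(0,T;\mathbb{R}^{m_1\times n})$ is fixed, the state equation \eqref{se2.1-state} is linear in $(X,v)$. Its coefficients are
$$
\widetilde A=A+B_1\Theta_1,\qquad \widetilde C=C+D_1\Theta_1,
$$
which lie in $L^\infty$, with control matrices $B_2$ and $D_2$. The cost \eqref{se2.1-follower-cost} is quadratic with weights $Q_2\ge0$, $G_2\ge0$ and $R_2\ge\delta I$ by Assumption \ref{H2}. This is a standard definite case: the control weight is uniformly positive and the state weights are nonnegative. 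In this case the classical theory asserts that the Riccati equation
$$
\dot P_2+\widetilde A^\top P_2+P_2\widetilde A+\widetilde C^\top P_2\widetilde C+Q_2-(P_2B_2+\widetilde C^\top P_2D_2)(R_2+D_2^\top P_2D_2)^{-1}(B_2^\top P_2+D_2^\top P_2\widetilde C)=0
$$
with $P_2(T)=G_2$ has a unique solution $P_2\in C([0,T];\mathbb{S}^n_+)$. This equation is exactly \eqref{P_2}. Since $P_2\ge0$, we get $R_2+D_2^\top P_2D_2\ge\delta I$, so $\Theta_2^*(\Theta_1)$ in \eqref{Th2*} is well defined and bounded.

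To make the argument self-contained at the key point, I will verify optimality by completing the square. For any admissible $v$, apply It\^o's formula to $\langle P_2X,X\rangle$ on $[t,T]$ and take $\mathbb{E}_t$. Substituting the Riccati equation then yields
$$
2\mathcal{J}_2(t,\xi;\Theta_1X,v)=\langle P_2(t)\xi,\xi\rangle+\mathbb{E}_t\int_t^T\big\langle (R_2+D_2^\top P_2D_2)(v-\Theta_2^*X),\,v-\Theta_2^*X\big\rangle ds .
$$
The local martingale term has zero conditional expectation because $X$ is square integrable and $P_2$ is bounded. Because $R_2+D_2^\top P_2D_2\ge\delta I$, the integral is nonnegative and vanishes only when $v=\Theta_2^*(\Theta_1)X$ a.e. This gives existence and uniqueness of the optimal control, the feedback form of $v^*$, and the value formula $\frac12\langle P_2(t)\xi,\xi\rangle$ in one step. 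Solvability of the closed-loop SDE is immediate because its coefficients are bounded.

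The main obstacle is the global solvability of the Riccati equation, which is a nonlinear matrix ODE that could in principle blow up. I will cite the standard result for this. If a sketch is wanted, the local solution is controlled by two-sided a priori bounds. The lower bound $P_2\ge0$ follows because $\langle P_2(s)x,x\rangle$ equals the follower's value on $[s,T]$, by the identity above applied where the solution exists, and that value is nonnegative. The upper bound comes from comparing with the cost of the control $v=0$, which gives $P_2(s)\le\mathcal{C}I$ with $\mathcal{C}$ depending on $\|\Theta_1\|_\infty$. These bounds prevent blow-up, so the local solution extends to all of $[0,T]$. Uniqueness follows from the local Lipschitz continuity of the right-hand side on bounded sets where $R_2+D_2^\top P_2D_2\ge\delta I$.
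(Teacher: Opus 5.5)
Your proposal is correct and follows the same route as the paper. The paper gives no separate proof and simply invokes the standard stochastic LQ theory of \cite{Yong-Zhou1999} for the problem with coefficients $A+B_1\Theta_1$, $C+D_1\Theta_1$, $B_2$, $D_2$. Your completion-of-squares identity, combined with the cited global solvability of the Riccati equation, is exactly what that theory provides. One small repair to your optional blow-up sketch: before $P_2\ge 0$ is known, $\langle P_2(s)x,x\rangle$ should be identified as twice the cost of the feedback control $\Theta_2^*(\Theta_1)$, not as the value. That quantity is still nonnegative, so it gives the lower bound without circularity, and the upper bound then follows as you describe.
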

Thus, at time $t$, the leader's problem reduces to an optimal control problem for the following coupled system (forward SDE + backward Riccati equation) with a quadratic cost: 
\vspace{-1mm}
\begin{equation}\label{ConPro-leader}
\begin{cases}\2n
\begin{cases}
dX=[(A+ B_1 \Theta_1 ) X+ B_2 \Theta_2^*(\Theta_1)X ]ds+ [(C+D_1 \Theta_1 )X + D_2 \Theta_2^*(\Theta_1)X ]dW(s),\q \mbox{in } [t,T],\\\ns\ds
\dot{P}_2=- ( A+B_1\Theta_1)^\top P_2 - P_2(A+B_1\Theta_1)- (C+D_1\Theta_1)^\top P_2 (C+D_1\Theta_1) - Q_2\\\ns\ds
\hspace{2.5em}+[P_2B_2\!+\! (C\!+\!D_1\Theta_1)^\top P_2 D_2 ][R_2\!+\! D_2^\top P_2 D_2]^{-1} [B_2^\top P_2\!+\! D_2^\top P_2 (C\!+\!D_1\Theta_1) ], \q \mbox{in } [t,T],\\
X(t)=\xi,\q P_2(T)=G_2.
\end{cases}\\\ns\ds
\cJ_1(t,\xi; \Theta_1 X,\Theta_2^*(\Theta_1)X)=
\frac{1}{2}\mE_t \Big[ \int_{t}^{T}  \langle  ( Q_1+ \Theta_1^\top R_1 \Theta_1  )X,X \rangle ds +  \langle G_1X(T),X(T) \rangle  \Big].
\end{cases}\vspace{-1mm}
\end{equation}
\begin{remark}
Note that once the leader fixes the linear feedback control $u = \Theta_1 X$, the follower's problem becomes a standard (and hence time-consistent) stochastic LQ problem. Thus, only the leader's problem may suffer from time-inconsistency, which we address using ideas from time-inconsistent forward-backward LQ control \cite{Lu-2024,Lu-Ma-Wang-2025}.
\end{remark}

\subsection{Preliminary results}\label{se-2.2}

We first introduce some necessary definitions and results from matrix analysis.
\begin{definition}
For any $A = (a_{ij}) \in \mathbb{R}^{n \times m}$, the \textit{vec-operator} stacks the columns of $A$ into a vector:
\vspace{-1mm}
\begin{equation}
\operatorname{vec}(A) = (a_{11}, \dots, a_{n1}, a_{12}, \dots, a_{n2}, \dots, a_{1m}, \dots, a_{nm})^\top.
\end{equation}
\end{definition}
\begin{definition}
For any $A \in \mathbb{R}^{n \times m}$ and $B \in \mathbb{R}^{p \times q}$, the \textit{Kronecker product} of $A$ and $B$ is defined by
\vspace{-1mm}
\begin{equation}
A \otimes B = \begin{pmatrix}
a_{11}B & \cdots & a_{1m}B \\
\vdots & \ddots & \vdots \\
a_{n1}B & \cdots & a_{nm} B
\end{pmatrix}.\vspace{-1mm}
\end{equation}
\end{definition}
\begin{lemma}\cite[Chapter 4.3]{Horn-Johnson-1991}\label{lm2.1}
For $A, B, C \in \mathbb{R}^{n \times n}$, the following matrix equations are equivalent:
\vspace{-1mm}
\begin{equation}
AX + XB = C \quad \Longleftrightarrow \quad (I_n \otimes A + B^\top \otimes I_n) \operatorname{vec}(X) = \operatorname{vec}(C).
\end{equation}
\end{lemma}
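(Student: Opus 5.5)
The plan is to reduce the matrix identity to a column-by-column identity, using the vec-operator and the Kronecker product definitions directly. Write $X=(x_1,\dots,x_n)$ and $C=(c_1,\dots,c_n)$ in columns, so that $\operatorname{vec}(X)=(x_1^\top,\dots,x_n^\top)^\top$ and similarly for $C$. Since $\operatorname{vec}$ is a linear bijection from $\mathbb{R}^{n\times n}$ onto $\mathbb{R}^{n^2}$, the equation $AX+XB=C$ holds if and only if $\operatorname{vec}(AX+XB)=\operatorname{vec}(C)$. It therefore suffices to prove
$$
\operatorname{vec}(AX)=(I_n\otimes A)\operatorname{vec}(X),\qquad \operatorname{vec}(XB)=(B^\top\otimes I_n)\operatorname{vec}(X),
$$
and then add the two identities using the linearity of $\operatorname{vec}$.

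For the first identity, the $j$-th column of $AX$ is $Ax_j$. By definition, $I_n\otimes A$ is block diagonal with $n$ copies of $A$ on the diagonal. Applied to $\operatorname{vec}(X)$, it produces the stacked vector $(Ax_1;\dots;Ax_n)$, which is exactly $\operatorname{vec}(AX)$.

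For the second identity, the $j$-th column of $XB$ is $\sum_{k} b_{kj}x_k$. The matrix $B^\top\otimes I_n$ has $(j,k)$ block equal to $(B^\top)_{jk}I_n=b_{kj}I_n$. Hence the $j$-th block of $(B^\top\otimes I_n)\operatorname{vec}(X)$ is $\sum_k b_{kj}x_k$, which is the $j$-th column of $XB$.

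Combining the two gives $\operatorname{vec}(AX+XB)=(I_n\otimes A+B^\top\otimes I_n)\operatorname{vec}(X)$, and injectivity of $\operatorname{vec}$ yields the equivalence in both directions. The only point requiring care is the index bookkeeping in the second identity. Because $\operatorname{vec}$ stacks columns rather than rows, the transpose falls on $B$, so the block entry is $b_{kj}$ and not $b_{jk}$. No analytic obstacle arises; this is the standard argument from \cite[Chapter 4.3]{Horn-Johnson-1991}.
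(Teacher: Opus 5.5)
Your proposal is correct. You check $\operatorname{vec}(AX)=(I_n\otimes A)\operatorname{vec}(X)$ and $\operatorname{vec}(XB)=(B^\top\otimes I_n)\operatorname{vec}(X)$ column by column, with the index check $(B^\top)_{jk}=b_{kj}$, and this is the standard argument: the paper gives no proof of its own and only cites Horn and Johnson, where the result follows by specializing the general identity $\operatorname{vec}(AXB)=(B^\top\otimes A)\operatorname{vec}(X)$ to each of these two cases, exactly what you verify directly.
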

\begin{lemma}\cite[Chapter 4.2, Chapter 4.4]{Horn-Johnson-1991}\label{lm2.2}
Let $A, B \in \mathbb{R}^{n \times n}$, and let $\sigma(A) = \{\lambda_i\}_{i=1}^n$ and $\sigma(B) = \{\mu_j\}_{j=1}^n$ be the sets of eigenvalues of $A$ and $B$, respectively. Then the following properties hold:
\begin{itemize}
\item[(i)] $(A \otimes B)^\top = A^\top \otimes B^\top$;
\item[(ii)] The eigenvalues of $I_n \otimes A + B \otimes I_n$ are given by $\{ \lambda_i + \mu_j \mid 1 \leq i \leq n, \; 1 \leq j \leq n \}$.
\end{itemize}
\end{lemma}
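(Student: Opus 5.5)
The plan is to prove the two claims of Lemma \ref{lm2.2} separately, working over $\mathbb{C}$ where needed. Part (i) is a direct block computation. Part (ii) rests on simultaneous triangularization.

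For (i), I will write $A\otimes B$ as the $n\times n$ block matrix whose $(i,j)$ block is $a_{ij}B$. Taking the transpose of a block matrix does two things: it swaps the block positions and transposes each block. So the $(i,j)$ block of $(A\otimes B)^\top$ is $a_{ji}B^\top$. This is precisely the $(i,j)$ block of $A^\top\otimes B^\top$, which proves (i).

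For (ii), I will first record the mixed-product rule
$$(A_1\otimes B_1)(A_2\otimes B_2)=(A_1A_2)\otimes(B_1B_2).$$
It follows by multiplying block matrices: the $(i,j)$ block of the left side is $\sum_k a^{(1)}_{ik}a^{(2)}_{kj}B_1B_2$. One consequence is that $U\otimes V$ is unitary with inverse $U^{*}\otimes V^{*}$ whenever $U$ and $V$ are unitary; the complex-conjugate version of (i) gives the adjoint.

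By Schur's theorem I choose unitary $U,V$ with $U^{*}AU=T_A$ and $V^{*}BV=T_B$ upper triangular. The diagonal of $T_A$ is $(\lambda_1,\dots,\lambda_n)$ and the diagonal of $T_B$ is $(\mu_1,\dots,\mu_n)$. I then conjugate by $W:=V\otimes U$. Using the mixed-product rule,
$$W^{*}(I_n\otimes A+B\otimes I_n)W=I_n\otimes T_A+T_B\otimes I_n .$$
The matrix $I_n\otimes T_A$ is block diagonal with upper-triangular diagonal blocks $T_A$, so it is upper triangular. The matrix $T_B\otimes I_n$ has blocks $(T_B)_{jk}I_n$ that vanish for $j>k$ and are diagonal for $j=k$, so it is also upper triangular. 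Their sum is therefore upper triangular. Its diagonal entry in position $(j-1)n+i$ equals $\lambda_i+\mu_j$. Since similarity preserves the spectrum with multiplicities, the eigenvalues of $I_n\otimes A+B\otimes I_n$ are $\{\lambda_i+\mu_j\}$.

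The one point that needs care is the ordering of the factors in $W=V\otimes U$. The matrix $U$ must act on the $A$ factor (the second slot) and $V$ on the $B$ factor (the first slot), so that both summands are triangularized simultaneously. Everything else is bookkeeping.
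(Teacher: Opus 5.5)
Your proof is correct. The block-transpose computation settles (i). For (ii), conjugating by the unitary $W=V\otimes U$ built from Schur triangularizations of $A$ and $B$ turns $I_n\otimes A+B\otimes I_n$ into the upper-triangular matrix $I_n\otimes T_A+T_B\otimes I_n$ with diagonal entries $\lambda_i+\mu_j$, which even gives the eigenvalues with algebraic multiplicities. The paper gives no proof of this lemma and only cites Horn--Johnson, where the Kronecker-sum eigenvalue result is proved by this same Schur-triangularization and mixed-product argument.
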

We recall existence and uniqueness results for ordinary differential equations with discontinuous right-hand side. To this end, we introduce the \textit{Carathéodory condition} in a domain $\mathcal{D}$ of the $(t,x)$-space:
\begin{equation*}
\text{Carath\'eodory condition: } \begin{cases}
\text{(i) the function $f(t,x)$ is defined and continuous in $x$ for almost all $t$;}\\[2pt]
\text{(ii) the function $f(t,x)$ is measurable in $t$ for each $x$;}\\[2pt]
\text{(iii) $|f(t,x)| \leq m(t)$, where $m(t)$ is integrable.}
\end{cases}
\end{equation*}

With this concept, we summarize the following results.

\begin{lemma}\cite[Chapter 1.1]{Filippov-1988}\label{lm2.3}
Let $\mathcal{D} = [0,T] \times \{ x \in \mathbb{R}^n \mid |x - x_T| \leq b \}$, and suppose
\begin{enumerate}
\item[(i)] the function $f$ satisfies the Carathéodory condition on $\mathcal{D}$;
\item[(ii)] $|f(t,x) - f(t,y)| \leq l(t) |x - y|$ for any $(t,x)$ and $(t,y)$ in $\mathcal{D}$, where $l$ is an integrable function on $[0,T]$.
\end{enumerate}
Then for the ordinary differential equation
\vspace{-1mm}
\begin{equation} \label{bode}
\dot{x}(t) = f(t, x(t)), \quad x(T) = x_T,\vspace{-1mm}
\end{equation}
we have
\begin{enumerate}
\item[(1)] there exists $\delta > 0$ such that \eqref{bode} admits a unique solution over $[T-\delta, T]$;
\item[(2)] the solution can be uniquely continued up to the boundary $\partial \mathcal{D}$ of the domain $\mathcal{D}$.
\end{enumerate}
\end{lemma}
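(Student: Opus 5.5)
The statement just above is the classical Carathéodory local existence, uniqueness and continuation result for the backward problem \eqref{bode}. I would prove it by the standard Picard/Banach fixed-point scheme in integral form. First, since $f$ satisfies the Carathéodory condition, for any continuous curve $x(\cdot)$ with values in the ball $\{|x-x_T|\le b\}$ the map $s\mapsto f(s,x(s))$ is measurable: approximate $x$ uniformly by step functions, use (ii) for each step and continuity in $x$ from (i). It is also dominated by $m(s)$. Hence \eqref{bode} is equivalent to the integral equation
$$
x(t)=x_T-\int_t^T f(s,x(s))\,ds ,
$$
with $x$ absolutely continuous. I would then choose $\delta>0$ so small that $\int_{T-\delta}^T m(s)\,ds\le b$ and $\int_{T-\delta}^T l(s)\,ds<1$. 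This is possible by absolute continuity of the integral, since $m,l\in L^1$.

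On the closed set $\mathcal{K}=\{x\in C([T-\delta,T];\mathbb{R}^n): |x(t)-x_T|\le b\}$, consider the operator
$$
(\Phi x)(t)=x_T-\int_t^T f(s,x(s))\,ds .
$$
The bound $|f|\le m$ gives $\Phi\mathcal{K}\subset\mathcal{K}$. The Lipschitz hypothesis gives
$$
\|\Phi x-\Phi y\|_\infty\le \Big(\int_{T-\delta}^T l\,ds\Big)\|x-y\|_\infty ,
$$
so $\Phi$ is a contraction. Its unique fixed point is the unique solution on $[T-\delta,T]$; this proves (1). Uniqueness among all solutions staying in the ball follows from the same contraction estimate, or alternatively from Gronwall's inequality with the integrable weight $l$.

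For (2), I would let $t_*$ be the infimum of times $\tau$ such that a solution exists on $[\tau,T]$ with graph in $\mathcal{D}$. By uniqueness, solutions on overlapping intervals agree and glue to a single solution on $(t_*,T]$. Because $|\dot x|\le m\in L^1$, the limit $x(t_*^+)$ exists by the Cauchy criterion. If the point $(t_*,x(t_*^+))$ lies in the interior of $\mathcal{D}$, meaning $t_*>0$ and $|x(t_*^+)-x_T|<b$, I would apply the local result again with terminal point $x(t_*^+)$ in a smaller ball contained in $\mathcal{D}$. That extends the solution below $t_*$ and contradicts the definition of $t_*$. Hence the graph reaches $\partial\mathcal{D}$, either $t_*=0$ or $|x-x_T|=b$, and the continuation is unique by the uniqueness part.

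The main obstacle I expect is the measurability of $s\mapsto f(s,x(s))$ under only the Carathéodory hypotheses. The remaining steps are routine once the integrable moduli $m$ and $l$ replace uniform constants in the choice of $\delta$.
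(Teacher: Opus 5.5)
The paper gives no proof of this lemma; it is quoted from Filippov. Your argument is a correct, self-contained proof of it.

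It does, however, take a different route from the standard Carath\'eodory theory the citation points to. There, the pieces are separate results:
\begin{itemize}
\item local existence is derived from the Carath\'eodory conditions alone, by a compactness argument (approximate solutions, equicontinuity from $|f|\le m$, Arzel\`a--Ascoli, passage to the limit by dominated convergence);
\item uniqueness comes from the integrable Lipschitz bound via a Gronwall-type estimate;
\item continuation up to $\partial\mathcal{D}$ is a third statement.
\end{itemize}
You instead use the Lipschitz hypothesis (ii) already at the existence stage. Choosing $\delta$ with $\int_{T-\delta}^T m\,ds\le b$ and $\int_{T-\delta}^T l\,ds<1$ makes the integral operator a contraction on $\mathcal{K}$. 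Existence and local uniqueness then come out together, and constructively, via Picard iteration. The cost is that non-Lipschitz Carath\'eodory right-hand sides are not covered. That is harmless here, since the lemma assumes (ii) and the proof of Theorem \ref{th3} verifies exactly such a Lipschitz bound. In short, the compactness route buys generality (existence without (ii)), while yours buys brevity.

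One small point in your continuation step. Gluing solutions defined on $[\tau_1,T]$ and $[\tau_2,T]$ needs uniqueness on intervals of arbitrary length. So you should invoke the Gronwall form with weight $l$ there, or repeat the contraction on consecutive short subintervals, rather than the single contraction on $[T-\delta,T]$. You already mention this alternative, so nothing is missing. Also take $\delta\le T$.
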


We recall an implicit mapping theorem for matrix functions.
\begin{lemma}\cite[Chapter VI, pp. 417]{Edwards-1973}\label{lm2.4}
Let $A, B \in \mathbb{R}^{n \times n}$, and let $F(X,Y): \mathbb{R}^{n \times n} \times \mathbb{R}^{n \times n} \to \mathbb{R}^{n \times n}$ be continuously differentiable. Denote by $D_X F(A,B): \mathbb{R}^{n \times n} \to \mathbb{R}^{n \times n}$ the partial Fr\'echet derivative of $F$ with respect to $X$ at $(A,B)$, given by
\vspace{-1mm}
\begin{equation*}
D_X F(A,B)[H] := \lim_{\varepsilon \to 0} \frac{F(A + \varepsilon H, B) - F(A,B)}{\varepsilon}.\vspace{-1mm}
\end{equation*}
Suppose $F(A,B) = 0$ and $D_X F(A,B)$ is an isomorphism. Then there exist a neighborhood $U$ of $B$ in $\mathbb{R}^{n \times n}$, a neighborhood $W$ of $(A,B)$ in $\mathbb{R}^{n \times n} \times \mathbb{R}^{n \times n}$, and a continuously differentiable mapping $\varphi: U \to \mathbb{R}^{n \times n}$ such that the following holds: if $(X,Y) \in W$ and $Y \in U$, then $F(X,Y) = 0$ if and only if $X = \varphi(Y)$.
\end{lemma}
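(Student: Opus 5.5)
Since the statement is the classical implicit function theorem written in matrix notation, the plan is to reduce it to the Euclidean case through the vec-operator and then run the standard contraction-mapping argument.

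\textbf{Step 1: transfer to Euclidean space.} The map $\operatorname{vec}:\mathbb{R}^{n\times n}\to\mathbb{R}^{n^2}$ is a linear isometric isomorphism when $\mathbb{R}^{n\times n}$ carries the Frobenius norm. Define $f(x,y):=\operatorname{vec}\big(F(\operatorname{vec}^{-1}x,\operatorname{vec}^{-1}y)\big)$ on $\mathbb{R}^{n^2}\times\mathbb{R}^{n^2}$. Then:
\begin{itemize}
\item $f$ is continuously differentiable.
\item $f(a,b)=0$, where $a=\operatorname{vec}(A)$ and $b=\operatorname{vec}(B)$.
\item The partial Jacobian $\partial_x f(a,b)$ is the matrix representing $H\mapsto D_XF(A,B)[H]$ in vec-coordinates. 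For example, if $F(X,Y)=AX+XB-C$, this is the Kronecker matrix of Lemma \ref{lm2.1}.
\item Since $D_XF(A,B)$ is an isomorphism, $L:=\partial_x f(a,b)$ is an invertible $n^2\times n^2$ matrix.
\end{itemize}
It therefore suffices to prove the Euclidean implicit function theorem for $f$ and pull everything back by $\operatorname{vec}^{-1}$.

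\textbf{Step 2: construction of $\varphi$ by contraction.} For fixed $y$, define $T_y(x):=x-L^{-1}f(x,y)$. Its zeros in $x$ are exactly the fixed points of $T_y$. We have $\partial_x T_y(x)=I-L^{-1}\partial_x f(x,y)$, which vanishes at $(a,b)$. By continuity of $\partial_x f$, choose $r>0$ so that $\|\partial_xT_y(x)\|\le \tfrac12$ whenever $|x-a|\le r$ and $|y-b|\le r$. The mean value inequality then shows that $T_y$ is a $\tfrac12$-contraction on the closed ball $\bar B_r(a)$.

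Next, since $|T_y(a)-a|=|L^{-1}f(a,y)|\to 0$ as $y\to b$, shrink to $|y-b|<s$ with $s\le r$ so that $|T_y(a)-a|\le r/2$. Then $T_y$ maps $\bar B_r(a)$ into itself. Banach's fixed point theorem gives a unique fixed point $\varphi(y)$ in this ball.

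Set $U=B_s(b)$ and $W=B_r(a)\times B_s(b)$, pulled back to matrices. The uniqueness of the fixed point is exactly the "if and only if" statement: for $(X,Y)\in W$, $F(X,Y)=0\iff X=\varphi(Y)$.

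\textbf{Step 3: regularity of $\varphi$.} This is the main obstacle, since it requires a quantitative estimate rather than just the fixed-point argument.
\begin{itemize}
\item \emph{Lipschitz continuity.} Subtract the fixed-point identities $\varphi(y)=T_y(\varphi(y))$ and $\varphi(y')=T_{y'}(\varphi(y'))$. Using the contraction bound together with the Lipschitz bound of $f$ in $y$ on the compact ball gives $|\varphi(y)-\varphi(y')|\le \mathcal{C}|y-y'|$.
\item \emph{Differentiability.} Shrink $r$ if necessary so that $\partial_xf$ remains invertible on $W$. Expand
\[
0=f(\varphi(y+h),y+h)-f(\varphi(y),y)
\]
to first order. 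The Lipschitz bound from the previous item makes the remainder $o(|h|)$. This yields
\[
D\varphi(y)=-\big[\partial_xf(\varphi(y),y)\big]^{-1}\partial_yf(\varphi(y),y).
\]
\item \emph{Continuity of the derivative.} This expression is continuous in $y$, because matrix inversion is continuous on invertible matrices and $\partial_x f$, $\partial_y f$, and $\varphi$ are all continuous.
\end{itemize}
Hence $\varphi$ is continuously differentiable. Translating back through $\operatorname{vec}^{-1}$ gives the claim, with derivative
\[
D\varphi(Y)=-\big(D_XF(\varphi(Y),Y)\big)^{-1}\circ D_YF(\varphi(Y),Y).
\]
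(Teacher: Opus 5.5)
Your proposal is correct. The paper gives no proof of its own and simply imports the lemma from Edwards, and your argument is the standard successive-approximation (contraction-mapping) proof of the implicit mapping theorem, made concrete in $\mathbb{R}^{n^2}$ via $\operatorname{vec}$. One cosmetic point: the step ``shrink $r$ if necessary so that $\partial_x f$ remains invertible'' is unnecessary, since $\|I-L^{-1}\partial_x f(x,y)\|\le \tfrac12$ on the ball already gives invertibility of $\partial_x f(x,y)$ by a Neumann series. If you did shrink $r$ after constructing $\varphi$, you would also have to re-choose $s$.
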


\vspace{-1mm}

\section{Closed-loop equilibrium strategy of the leader}\label{se3}

In this section, we first introduce the leader's closed-loop equilibrium strategy (Definition \ref{De-closedloop}) and characterize it via an equilibrium Riccati equation (ERE). We then show that this strategy, together with the follower's rational response, forms a feedback Stackelberg equilibrium \eqref{De-feed-Sta}. A comparison of the two solution concepts is given in Table \ref{Tab1}.

\vspace{-1mm}

\subsection{Equilibrium Riccati equation for Problem \eqref{ConPro-leader}}\label{se3.1}

We begin with the definition of a closed-loop equilibrium strategy for the leader's problem \eqref{ConPro-leader}. This definition captures the idea that the leader should have no incentive to deviate from her strategy at any time, thereby resolving time-inconsistency.

%\vspace{-1mm}
%
\begin{definition}\label{De-closedloop}
A matrix-valued function $\bar{\Theta}_1 \in L^\infty(0,T;\dbR^{m_1\times n})$ is called a closed-loop equilibrium strategy for Problem \eqref{ConPro-leader}
if  for any $ (t,\xi,\dbV)  \in [0,T)\times L_{\cF_t}^2(\Omega;\dbR^n)\times L^\infty(0,T;\dbR^{m_1\times n}),$
the following variational inequality holds:
\vspace{-1mm}
\begin{equation}
\liminf_{\e\to 0+} \frac{\cJ_1\big(t,\xi;\bar{\Theta}^\e_1 X^\e,\Theta_2^*(\bar{\Theta}^\e_1) X^\e\big)-\cJ_1\big(t,\xi;\bar{\Theta}_1 \bar{X},\Theta_2^*(\bar{\Theta}_1) \bar{X} \big)}{\e}\geq 0,  \q \as
\end{equation}
where $\bar{\Theta}_1^\e(s)=\bar{\Theta}_1(s)+1_{[t,t+\e]}\dbV(s)$,
and $X^\e(\cd)$ denotes the solution of the state equation in \eqref{ConPro-leader} corresponding to the perturbed control pair $\big( \bar{\Theta}^\e_1 X^\e,\Theta_2^*(\bar{\Theta}^\e_1) X^\e\big)$.
\end{definition}
\begin{remark}
Definition \ref{De-closedloop} implies that once the leader adopts $\bar{\Theta}_1$, no unilateral deviation at any time $t$ can improve her outcome. Thus, the leader has no regret after committing to this strategy, offering a rational resolution to time-inconsistency. This is analogous to the notion of ``consistent planning" in time-inconsistent control \cite{Bjork-2017,Strotz-1955}.
\end{remark}
%
%\vspace{-3mm}
%
\begin{remark}
The specific perturbation form $\bar{\Theta}_1^\epsilon(s) = \bar{\Theta}_1(s) + \mathbf{1}_{[t,t+\epsilon]}(s)\mathbb{V}(s)$ is chosen for the following reasons:

1.  Richness: As $\epsilon \to 0$, this class of perturbations can generate  any  element in $L_{\mathcal{F}_t}^2(\Omega;\mathbb{R}^n)$ at time $t$, making it sufficiently rich to capture the essential feature of the closed-loop equilibrium strategy.

2. Locality: The perturbation is active only on the infinitesimal interval $[t, t+\epsilon]$, which allows us to isolate the marginal effect of changing the strategy at time $t$.

3.  Analytical tractability: This form leads to a clean variational inequality that can be characterized via an ERE, as shown in Theorem \ref{th1}.

Although different perturbation forms appear in the literature (see, e.g., \cite{Dou-2020,Lu-Ma-Wang-2025,Wang-Yong-Zhou-2024,Yong-2017}), they lead to the same ERE and hence the same equilibrium strategy.
\end{remark}
%
%\vspace{-3mm}
%
\begin{remark}
We focus on linear equilibrium strategies, as this paper concerns sufficiency--i.e., the existence of equilibrium strategies. Whether nonlinear equilibria also exist remains open.
\end{remark}
%
%\vspace{-1mm}
To state the main characterization, we introduce the following shorthand notations:
\vspace{-1mm}
\begin{equation}\label{nota}
\begin{cases}
\ds\mathbf{B}(P_2):= B_1 -B_2(R_2+ D_2^\top P_2 D_2)^{-1} D_2^\top P_2 D_1,\\
\ns\ds \mathbf{D}(P_2):= D_1 -D_2(R_2+ D_2^\top P_2 D_2)^{-1} D_2^\top P_2 D_1,\\
\ns\ds \mathbf{A}(\Theta_1,P_2) =A- B_2\big( R_2+ D_2^{\top} P_2 D_2 \big)^{-1} \big(  B_2^\top P_2+ D_2^\top P_2 C \big) + \mathbf{B}(P_2)\Theta_1,\\
\ns\ds\mathbf{C}(\Theta_1,P_2) =  C  -D_2 \big( R_2+ D_2^{\top} P_2 D_2 \big)^{-1} \big( B_2^\top P_2+ D_2^\top P_2 C \big)+ \mathbf{D}(P_2)\Theta_1.
\end{cases}\vspace{-1mm}
\end{equation}
where $P_2$ depends on $\Theta_1$ through Riccati equation \eqref{P_2}.

Moreover, we introduce the following equation derived via decoupling (see Subsection \ref{Appendix-pr-th1}):
\vspace{-1mm}
\begin{equation}\label{P_1}
\begin{cases}
\dot{P}_1+ \mathbf{A}(\Theta_1,P_2)^\top P_1+ P_1 \mathbf{A}(\Theta_1,P_2) + \mathbf{C}(\Theta_1,P_2)^\top P_1 \mathbf{C}(\Theta_1,P_2) + Q_1+ \Theta_1^\top R_1 \Theta_1 =0,\q \mbox{in } [0,T],\\
P_1(T)=G_1.
\end{cases}\vspace{-1mm}
\end{equation}
Now we can give the equivalent characterization for the closed-loop equilibrium strategy in Definition \ref{De-closedloop} and derive the corresponding ERE. 
\begin{theorem}\label{th1}
Let Assumption \ref{H2} hold. Then a control strategy $\bar{\Theta}_1\in L^\infty(0,T;\dbR^{m_1\times n})$ is a closed-loop equilibrium strategy defined in Definition \ref{De-closedloop} if and only if
\vspace{-1mm}
\begin{equation}\label{th1-eq1}
R_1 \bar{\Theta}_1+   \mathbf{B}(P_2)^\top P_1+ \mathbf{D}(P_2)^\top P_1 \mathbf{C} (\bar{\Theta}_1,P_2)=0.\vspace{-1mm}
\end{equation}
\end{theorem}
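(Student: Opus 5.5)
The plan is to compute the first-order expansion in $\e$ of the leader's cost under the spike perturbation $\bar\Theta_1^\e=\bar\Theta_1+1_{[t,t+\e]}\dbV$. The key structural observation is that, for a fixed feedback $\Theta_1$, the closed-loop state in \eqref{ConPro-leader} is a linear SDE with drift $\mathbf{A}(\Theta_1,P_2)X$ and diffusion $\mathbf{C}(\Theta_1,P_2)X$. This follows by substituting \eqref{Th2*} and using the notation \eqref{nota}. By It\^o's formula applied to $\langle P_1X,X\rangle$, with $P_1$ solving the Lyapunov equation \eqref{P_1} with data $(\Theta_1,P_2)$, we get
$$\cJ_1\big(t,\xi;\Theta_1X,\Theta_2^*(\Theta_1)X\big)=\tfrac12\langle P_1(t)\xi,\xi\rangle .$$
The variational inequality is therefore a statement about $P_1^\e(t)-\bar P_1(t)$, where $P_2^\e,P_1^\e$ solve \eqref{P_2}, \eqref{P_1} with $\Theta_1=\bar\Theta_1^\e$.

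Both $P_2$ and $P_1$ are backward equations, and $\bar\Theta^\e_1=\bar\Theta_1$ on $(t+\e,T]$. Hence $P_2^\e=\bar P_2$ and $P_1^\e=\bar P_1$ on $[t+\e,T]$. On $[t,t+\e]$ the right-hand side of \eqref{P_2} is bounded uniformly in $\e$. Here I use Assumption \ref{H2} and Proposition \ref{pro2.1} to get $0\le P_2^\e\le \cC$ and $R_2+D_2^\top P_2^\e D_2\ge\delta I$. Consequently $|P_2^\e-\bar P_2|\le \cC\e$ on $[t,t+\e]$, and the same holds for $P_1^\e$ once a uniform bound on $P_1^\e$ is in hand. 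Therefore, in the driver of the $P_1^\e$-equation on $[t,t+\e]$, I may replace $P_2^\e$ by $\bar P_2$ and $P_1^\e$ by $\bar P_1$ at a cost of $O(\e)$ pointwise, i.e. $O(\e^2)$ after integration over $[t,t+\e]$. Integrating \eqref{P_1} over $[t,t+\e]$ for both strategies and subtracting then gives
$$P_1^\e(t)-\bar P_1(t)=\int_t^{t+\e}\Big[\dbV^\top\Lambda+\Lambda^\top\dbV+\dbV^\top\big(R_1+\mathbf{D}^\top\bar P_1\mathbf{D}\big)\dbV\Big]ds+O(\e^2),$$
where $\Lambda:=R_1\bar\Theta_1+\mathbf{B}(\bar P_2)^\top\bar P_1+\mathbf{D}(\bar P_2)^\top\bar P_1\mathbf{C}(\bar\Theta_1,\bar P_2)$. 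This uses that $\mathbf{A}$ and $\mathbf{C}$ are affine in $\Theta_1$, with slopes $\mathbf{B}(\bar P_2)$ and $\mathbf{D}(\bar P_2)$, so that the difference of drivers expands exactly into a linear part in $\dbV$ plus the quadratic term $\dbV^\top(R_1+\mathbf{D}^\top\bar P_1\mathbf{D})\dbV$.

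Dividing by $\e$ and letting $\e\to0^+$ (at Lebesgue points $t$ of the bounded integrand), the equilibrium condition becomes
$$\big\langle[\dbV^\top\Lambda+\Lambda^\top\dbV+\dbV^\top(R_1+\mathbf{D}^\top\bar P_1\mathbf{D})\dbV](t)\,\xi,\xi\big\rangle\ge0\quad\text{for all }\xi,\dbV.$$
For necessity, I replace $\dbV$ by $\lambda\dbV$, divide by $\lambda$, and let $\lambda\to0^\pm$. This yields $\langle \dbV^\top\Lambda\xi,\xi\rangle=0$ for all $\dbV,\xi$. Choosing $\dbV=\eta\xi^\top$ then gives $\Lambda(t)\xi=0$, so $\Lambda=0$, which is exactly \eqref{th1-eq1}. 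For sufficiency, if $\Lambda=0$ the remaining quadratic term is nonnegative. This holds because $\bar P_1\ge0$, as the solution of a Lyapunov equation with nonnegative data $Q_1+\bar\Theta_1^\top R_1\bar\Theta_1\ge0$ and $G_1\ge0$, and $R_1\ge\delta I$.

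The main obstacle is making the $O(\e^2)$ remainder rigorous and uniform. Specifically, one needs a bound on $P_2^\e$ and $P_1^\e$ on $[t,t+\e]$ that is uniform in $\e$ despite the nonlinear Riccati term. One also has to handle the ``for every $t$'' in Definition \ref{De-closedloop} with merely $L^\infty$ coefficients. For the first, I would rely on $0\le P_2^\e\le\cC$ coming from the value-function representation in Proposition \ref{pro2.1}, combined with Gronwall on the Lyapunov equation for $P_1^\e$. For the second, I would prove the characterization at Lebesgue points and read the identity \eqref{th1-eq1} in the a.e. sense.
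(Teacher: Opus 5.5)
Your proof is correct, and it takes a genuinely different route from the paper. The paper works at the level of the state process. It derives $L^2$ estimates for $X^\varepsilon-X$ and splits the cost difference into two terms $J_1,J_2$, each represented through an adjoint BSDE. It decouples these via the ansatz $Y_1=P_1X_0^\varepsilon+\widetilde P_3X^\varepsilon$ (with $|\widetilde P_3|=O(\varepsilon)$) and $Y_2=P_1X$, and then freezes $X,X^\varepsilon$ at $\xi$ on $[t,t+\varepsilon]$.

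You instead use the fact that, for homogeneous linear feedback with deterministic coefficients, the leader's cost is exactly $\frac12\langle P_1(t)\xi,\xi\rangle$. Everything then reduces to a spike variation of the deterministic backward pair $(P_2,P_1)$. Since $\mathbf{A},\mathbf{C}$ are affine in $\Theta_1$ with slopes $\mathbf{B}(P_2),\mathbf{D}(P_2)$, the change in the Lyapunov driver is computed exactly. The only remainder is the $O(\varepsilon^2)$ incurred by freezing $(P_2^\varepsilon,P_1^\varepsilon)$ at $(\bar P_2,\bar P_1)$. Both routes arrive at the same first-order quantity $\frac12\langle(R_1+\mathbf{D}^\top\bar P_1\mathbf{D})\mathbb{V}\xi,\mathbb{V}\xi\rangle+\langle\Lambda\xi,\mathbb{V}\xi\rangle$.

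Your version is shorter and needs no SDE/BSDE estimates. Because the identity is deterministic, the almost-sure quantifier and the randomness of $\xi$ become trivial. You are also more careful than the paper on two points it passes over. The first is the $\varepsilon$-uniform bound $0\le P_2^\varepsilon\le\mathcal{C}$. The second is that, with $L^\infty$ data, the limit exists only at Lebesgue points, so \eqref{th1-eq1} is an a.e.\ identity. For necessity, take $\mathbb{V}$ constant, so that a single null set of non-Lebesgue points of $\Lambda$ and $R_1+\mathbf{D}^\top\bar P_1\mathbf{D}$ serves all $\mathbb{V}$. Sufficiency then holds at every $t$, because the linear term integrates to zero.

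What the paper's adjoint-based variational method buys is generality. It does not rely on the cost being a deterministic quadratic form in $\xi$. It is therefore the version that extends to affine feedback $\Theta_1X+\varphi$, random coefficients, or recursive forward-backward costs, as in the authors' earlier work.
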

By Assumption \ref{H2} and  standard properties of Lyapunov differential equations  \cite[Lemma 7.3]{Yong-Zhou1999}, we have $\mathbf{D}(P_2)^\top P_1 \mathbf{D}(P_2)+ R_1\geq \delta I$.
This together with \eqref{nota} and \eqref{th1-eq1} implies
\vspace{-1mm}
\begin{align}\label{bar-Theta1}
\!\!\!\!\bar{\Theta}_1 \!=\!-\big[R_1\!+\! \mathbf{D}(P_2)^\top\! P_1 \mathbf{D}(P_2)  \big]^{-1}\! \big\{  \mathbf{B}(P_2)^\top\!  P_1\!+\! \mathbf{D}(P_2)^\top\! P_1  \big[ C  \!-\!D_2 \big( R_2\!+\! D_2^{\top} P_2 D_2 \big)^{-1}\! \big( B_2^\top P_2\!+\! D_2^\top P_2 C \big)\big]   \big\}.\vspace{-1mm}
\end{align}
Combining this with the Riccati equation \eqref{P_2} for $P_2$ and the decoupled equation \eqref{P_1} for $P_1$, we obtain the ERE
\vspace{-1mm}
\begin{equation}\label{ERE}
\!\!\begin{cases}
\dot{P}_2=- ( A+B_1 \bar{\Theta}_1)^\top P_2 - P_2(A+B_1 \bar{\Theta}_1)- (C+D_1 \bar{\Theta}_1)^\top P_2 (C+D_1 \bar{\Theta}_1) - Q_2\\\ns\ds
\hspace{3.5em} +[P_2B_2+ (C+D_1 \bar{\Theta}_1)^\top P_2 D_2 ] [R_2+ D_2^\top P_2 D_2]^{-1} [B_2^\top P_2+ D_2^\top P_2 (C+D_1\bar{\Theta}_1) ], &\mbox{in }[0,T],\\\ns\ds
\dot{P}_1=-\big[ \mathbf{A}(\bar{\Theta}_1,P_2)^\top P_1+ P_1 \mathbf{A}(\bar{\Theta}_1,P_2) + \mathbf{C}(\bar{\Theta}_1,P_2)^\top P_1 \mathbf{C}(\bar{\Theta}_1,P_2) + Q_1+ \bar{\Theta}_1^\top R_1 \bar{\Theta}_1 \big], &\mbox{in }[0,T],\\
P_2(T)=G_2,\q P_1(T)=G_1.
\end{cases}\vspace{-1mm}
\end{equation}
The (linear) closed-loop equilibrium strategy is obtained by solving the ERE \eqref{bar-Theta1}--\eqref{ERE}.

\vspace{-2mm}

\subsection{Feedback Stackelberg equilibrium}\label{se3.2}

We now show that the closed-loop equilibrium strategy pair $(\bar{\Theta}_1, \Theta_2^*(\bar{\Theta}_1))$ constitutes a feedback Stackelberg equilibrium in the sense of \eqref{De-feed-Sta}. This establishes a connection between our time-consistent approach and the classical feedback Stackelberg concept.

First, we rewrite the ERE \eqref{ERE} in a more symmetric form. From \eqref{Th2*} and \eqref{nota}, we have  
\vspace{-1mm}
\begin{equation*}
\begin{aligned}
\mathbf{A}(\bar{\Theta}_1,P_2) = A + B_1 \bar{\Theta}_1 + B_2 \Theta_2^*(\bar{\Theta}_1), \qquad
\mathbf{C}(\bar{\Theta}_1,P_2) = C + D_1 \bar{\Theta}_1 + D_2 \Theta_2^*(\bar{\Theta}_1).
\end{aligned} \vspace{-1mm}
\end{equation*}
Substituting these into \eqref{ERE} yields the following compact form: 
\vspace{-1mm}
\begin{eqnarray}\label{ERE-2}
\!\!\begin{cases}\ns\ds
\dot{P}_2\!=\!-\big[ \mathbf{A}(\bar{\Theta}_1,P_2)^\top P_2\!+\! P_2 \mathbf{A}(\bar{\Theta}_1,P_2) \!+ \!\mathbf{C}(\bar{\Theta}_1,P_2)^\top P_2 \mathbf{C}(\bar{\Theta}_1,P_2) \!+\! Q_2 \!+\! \Theta_2^*(\bar{\Theta}_1)^\top R_2 \Theta_2^*(\bar{\Theta}_1) \big], &\mbox{in }[0,T],\\\ns\ds
\dot{P}_1\!=\!-\big[ \mathbf{A}(\bar{\Theta}_1,P_2)^\top P_1\!+\! P_1 \mathbf{A}(\bar{\Theta}_1,P_2) \!+\! \mathbf{C}(\bar{\Theta}_1,P_2)^\top P_1 \mathbf{C}(\bar{\Theta}_1,P_2) \!+\! Q_1\!+\! \bar{\Theta}_1^\top R_1 \bar{\Theta}_1 \big], &\mbox{in }[0,T],\\
P_2(T)=G_2,\q P_1(T)=G_1,
\end{cases} \vspace{-1mm}
\end{eqnarray}
with
\vspace{-1mm}
\begin{equation}\label{Th1-Th2}
\begin{cases}
\Theta_2^*(\bar{\Theta}_1) = -\big[ R_2 + D_2^{\top} P_2 D_2 \big]^{-1} \big[ B_2^\top P_2 + D_2^\top P_2 (C + D_1 \bar{\Theta}_1) \big],\\[6pt]
\bar{\Theta}_1 = -\big[ R_1 + \mathbf{D}(P_2)^\top P_1 \mathbf{D}(P_2) \big]^{-1} \Big\{ \mathbf{B}(P_2)^\top P_1 \\[4pt]
\qquad\q + \mathbf{D}(P_2)^\top P_1 \big[ C - D_2 (R_2 + D_2^{\top} P_2 D_2)^{-1} (B_2^\top P_2 + D_2^\top P_2 C) \big] \Big\}.
\end{cases}%\vspace{-1mm}
\end{equation}

Define the control strategies $(u^*, v^*)$ by
\begin{align}
u^*(t,x) &:= \bar{\Theta}_1(t) \, x, \label{u*}\\
v^*(t,x,u) &:= -\big[ (R_2 + D_2^{\top} P_2 D_2)^{-1} (B_2^\top P_2 + D_2^\top P_2 C) \big](t) \, x - \big[ (R_2 + D_2^{\top} P_2 D_2)^{-1} D_2^\top P_2 D_1 \big](t) \, u, \label{v*}
\end{align}
From these definitions, we immediately obtain the consistency relation
$v^*(t,x, u^*(t,x)) = \big[\Theta_2^*(\bar{\Theta}_1)\big](t) \, x$.

\begin{theorem}\label{th2}
Let Assumption \ref{H2} hold and assume that the ERE \eqref{ERE-2}--\eqref{Th1-Th2} admits a solution. Then the control pair $(u^*, v^*)$ defined by \eqref{u*}--\eqref{v*} is a feedback Stackelberg equilibrium as defined in \eqref{De-feed-Sta}.
\end{theorem}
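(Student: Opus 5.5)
We plan to verify the two inequalities in \eqref{De-feed-Sta} directly by a completion-of-squares (verification) argument built on the value functions $V_i(t,x)=\frac12\langle P_i(t)x,x\rangle$, $i=1,2$, where $(P_1,P_2)$ solve \eqref{ERE-2}--\eqref{Th1-Th2}. Throughout, fix $(t,x)$ and apply It\^o's formula to $\langle P_i X,X\rangle$ along the state driven by an arbitrary admissible pair. Since all controls in \eqref{feed-Sta-control} are Lipschitz in the state, the state equation is well posed with square-integrable moments, so the stochastic integrals are true martingales and vanish under $\mathbb{E}_t$.

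\emph{Step 1 (follower's inequality).} Fix the leader's strategy $u^*=\bar\Theta_1 x$ and let $v=v(s,X,u^*(s,X))$ be arbitrary; write $v(s)$ for the realized process. The system is then \eqref{se2.1-state} with $\Theta_1=\bar\Theta_1$. The first equation in \eqref{ERE-2} coincides, by the identity $\mathbf A=A+B_1\bar\Theta_1+B_2\Theta_2^*$ and the relation before \eqref{ERE-2}, with the Riccati equation \eqref{P_2} for $\Theta_1=\bar\Theta_1$. Completing the square gives
\begin{equation*}
\mathcal J_2(t,x;u^*,v(u^*))=\tfrac12\langle P_2(t)x,x\rangle+\tfrac12\mathbb{E}_t\int_t^T\big\langle (R_2+D_2^\top P_2D_2)(v-\Theta_2^*(\bar\Theta_1)X),\,v-\Theta_2^*(\bar\Theta_1)X\big\rangle ds .
\end{equation*}
We need $P_2\ge0$, which holds because $P_2$ solves a Lyapunov equation with nonnegative data, so that $R_2+D_2^\top P_2D_2\ge\delta I$. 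The minimum is then attained at $v^*(s,X,u^*)=\Theta_2^*(\bar\Theta_1)X$, which is the first inequality. This is essentially Proposition \ref{pro2.1}.

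\emph{Step 2 (leader's inequality).} Now let $u\in\mathcal U$ be arbitrary and let the follower respond with $v^*(s,X,u)=K_0X+K_1u$, where $K_0=-(R_2+D_2^\top P_2D_2)^{-1}(B_2^\top P_2+D_2^\top P_2C)$ and $K_1=-(R_2+D_2^\top P_2D_2)^{-1}D_2^\top P_2D_1$. Substituting, the drift becomes $(A+B_2K_0)X+\mathbf B(P_2)u$ and the diffusion $(C+D_2K_0)X+\mathbf D(P_2)u$. Here $A+B_2K_0$ and $C+D_2K_0$ are exactly $\mathbf A(0,P_2)$ and $\mathbf C(0,P_2)$ in \eqref{nota}, with $P_2$ frozen as the deterministic ERE solution. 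The leader therefore faces a standard LQ problem with state coefficients $(\mathbf A(0,P_2),\mathbf B,\mathbf C(0,P_2),\mathbf D)$ and cost weights $(Q_1,R_1,G_1)$. Apply It\^o to $\langle P_1X,X\rangle$ and use the $P_1$-equation in \eqref{ERE-2}. Expanding $\mathbf A(\bar\Theta_1,P_2)=\mathbf A(0,P_2)+\mathbf B\bar\Theta_1$ and similarly for $\mathbf C$, this equation is equivalent to the standard Riccati equation
\begin{equation*}
\dot P_1+\mathbf A_0^\top P_1+P_1\mathbf A_0+\mathbf C_0^\top P_1\mathbf C_0+Q_1-\bar\Theta_1^\top(R_1+\mathbf D^\top P_1\mathbf D)\bar\Theta_1=0,
\end{equation*}
where $\mathbf A_0=\mathbf A(0,P_2)$ and $\mathbf C_0=\mathbf C(0,P_2)$. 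The equivalence uses \eqref{th1-eq1}, i.e.\ $(R_1+\mathbf D^\top P_1\mathbf D)\bar\Theta_1=-(\mathbf B^\top P_1+\mathbf D^\top P_1\mathbf C_0)$, to collapse the cross terms. Completing the square then yields
\begin{equation*}
\mathcal J_1(t,x;u,v^*(u))=\tfrac12\langle P_1(t)x,x\rangle+\tfrac12\mathbb{E}_t\int_t^T\big\langle (R_1+\mathbf D^\top P_1\mathbf D)(u-\bar\Theta_1X),\,u-\bar\Theta_1X\big\rangle ds,
\end{equation*}
and $R_1+\mathbf D^\top P_1\mathbf D\ge\delta I$ because $P_1\ge0$, as noted after Theorem \ref{th1}. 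Hence $u^*$ minimizes, giving the second inequality.

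The main obstacle is the algebra in Step 2. One must check carefully that the $\bar\Theta_1$-dependent quadratic and cross terms in the $P_1$-equation of \eqref{ERE-2} reorganize exactly into $-\bar\Theta_1^\top(R_1+\mathbf D^\top P_1\mathbf D)\bar\Theta_1$ using \eqref{Th1-Th2}. One must also be careful that $P_2$ is treated as a fixed deterministic coefficient: a deviation of the leader does not alter the follower's map $v^*$, which is defined through the ERE's $P_2$. This is exactly the local character of \eqref{De-feed-Sta}, and it is what makes the verification a standard LQ argument. Finally, the nonnegativity $P_1,P_2\ge0$ should be justified from the Lyapunov forms in \eqref{ERE-2} together with Assumption \ref{H2}.
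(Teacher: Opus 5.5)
Your proposal is correct and follows essentially the same verification argument as the paper: It\^o's formula applied to $\frac12\langle P_iX,X\rangle$ along an arbitrary admissible deviation. Where the paper uses the pointwise Hamiltonian minimizations \eqref{pr-th2-eq1}--\eqref{pr-th2-eq2}, you complete the square explicitly, with weights $R_2+D_2^\top P_2D_2\ge\delta I$ and $R_1+\mathbf{D}(P_2)^\top P_1\mathbf{D}(P_2)\ge\delta I$. Your Step 2 rewrites the $P_1$-equation of \eqref{ERE-2}, via \eqref{Th1-Th2}, as the standard Riccati equation of the reduced LQ problem with coefficients $(\mathbf{A}(0,P_2),\mathbf{B}(P_2),\mathbf{C}(0,P_2),\mathbf{D}(P_2))$; this usefully makes explicit the leader's case, which the paper dismisses with ``Similarly.''
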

\vspace{-4mm}
\begin{table}[htbp]
    \centering
    \renewcommand{\arraystretch}{1.5} % 撑开行距
    \caption{Comparison between feedback Stackelberg equilibrium and closed-loop equilibrium strategy}
    \label{Tab1}
    \renewcommand{\tabularxcolumn}[1]{m{#1}} % 保证所有列垂直居中
    
    % 注意看这里的定义，我加了首尾的竖线 | ，并且所有的横线都换成了 \hline
    \begin{tabularx}{\textwidth}{|>{\centering\arraybackslash}m{3cm} | >{\centering\arraybackslash}X |>{\centering\arraybackslash}X|}
        \hline
        \textbf{Feature} & \textbf{Feedback Stackelberg equilibrium (1.10)} & \textbf{Closed-loop equilibrium strategy (3.1)} \\
        \hline
        Follower's optimality & Locally optimal along the leader's specific equilibrium path & \textbf{Globally optimal} against \textbf{any} admissible leader control \\
        \hline
        Leader's optimality & Locally optimal given the follower's response & Time-consistent, no-regret equilibrium strategy \\
        \hline
        Time-consistency & By definition  & Proved via ERE and variational method \\
        \hline
        Robustness to leader's deviation & Low (follower cannot re-optimize) & High (follower always responds optimally) \\
        \hline
        Information requirement for follower & Needs to know $u^*$ (leader's specific strategy) & Needs only the feedback form of leader's control \\
        \hline
        Derivation method & Dynamic programming + static Stackelberg at Hamiltonian level & Variational method + ERE \\
        \hline
        Global well-posedness & Requires short horizon or control-independent diffusion \cite{Basar-Olsder-1999,Li-Yu-2018} & Established for arbitrary finite horizon under three scenarios (Theorem \ref{th4})\\
        \hline
    \end{tabularx}
\end{table}
% %
% \begin{table}[htbp]
% 	\centering
% 	\caption{Comparison between feedback Stackelberg equilibrium and closed-loop equilibrium strategy}
% 	\label{Tab1}
% 	\renewcommand{\tabularxcolumn}[1]{m{#1}}
% 	\begin{tabularx}{\textwidth}{>{\centering\arraybackslash}m{3cm} | >{\centering\arraybackslash}X |>{\centering\arraybackslash}X}
% 		\toprule
% 		\textbf{Feature} & \textbf{Feedback Stackelberg equilibrium (1.10)} & \textbf{Closed-loop equilibrium strategy (3.1)} \\
% 		\midrule
% 		Follower's optimality & Locally optimal along the leader's specific equilibrium path & \textbf{Globally optimal} against \textbf{any} admissible leader control \\
% 		\midrule
% 		Leader's optimality & Locally optimal given the follower's response & Time-consistent, no-regret equilibrium strategy \\
% 		\midrule
% 		Time-consistency & By definition  & Proved via ERE and variational method \\
% 		\midrule
% 		Robustness to leader's deviation & Low (follower cannot re-optimize) & High (follower always responds optimally) \\
% 		\midrule
% 		Information requirement for follower & Needs to know $u^*$ (leader's specific strategy) & Needs only the feedback form of leader's control \\
% 		\midrule
% 		Derivation method & Dynamic programming + static Stackelberg at Hamiltonian level & Variational method + ERE \\
% 		\midrule
% 		Global well-posedness & Requires short horizon or control-independent diffusion \cite{Basar-Olsder-1999,Li-Yu-2018} & Established for arbitrary finite horizon under three scenarios (Theorem \ref{th4})\\
% 		\bottomrule
% 	\end{tabularx}
% \end{table}

\begin{remark}\label{remark3.3}
In the feedback Stackelberg equilibrium \eqref{De-feed-Sta}, the follower optimizes only along the curve induced by the leader's specific strategy $u^*$, requiring advance knowledge of $u^*$--impractical if the follower does not know the leader's cost functional. In contrast, in our closed-loop equilibrium formulation (Definition \ref{De-closedloop}), the follower selects a globally optimal response to any admissible leader control, while the leader adopts a time-consistent, no-regret strategy. Thus, the follower's strategy is more robust and the leader retains flexibility. This conceptual distinction is summarized in Table \ref{Tab1}.
\end{remark}

\begin{remark}
The ERE \eqref{ERE-2}--\eqref{Th1-Th2} derived via our variational method coincides with the coupled Riccati equations from the HJB system of a static Stackelberg game at the Hamiltonian level \cite{Basar-Olsder-1999,Bensoussan-Chen-Sethi-2014,Huang-Shi-2024}. Thus, our approach offers an alternative derivation and, more importantly, establishes global well-posedness (Section \ref{se4})--a result previously unavailable except under restrictive short-time or control-independent diffusion assumptions.
\end{remark}

\vspace{-2mm}
\section{Global well-posedness of ERE \eqref{bar-Theta1}--\eqref{ERE}}\label{se4}

In this section, we present the global well-posedness of the ERE \eqref{bar-Theta1}--\eqref{ERE} for three distinct scenarios, including both one-dimensional and high-dimensional settings. The analysis relies on deriving suitable a priori estimates that prevent finite-time blow-up.  Detailed proofs are deferred to Subsections \ref{Appendix-pr-th3}--\ref{Appendix-pr-th4}; here we present the main statements and an intuitive discussion of the assumptions.

The following theorem shows that once a priori bounds are available, global existence and uniqueness follow via a standard continuation argument.

\begin{theorem}\label{th3}
Let Assumption \ref{H2} hold, and suppose that ERE \eqref{ERE-2}--\eqref{Th1-Th2} admits a priori estimates in the following sense: whenever \eqref{ERE-2}--\eqref{Th1-Th2} admits a solution $(P_1,P_2)\in C([0,T];\mathbb{S}_+^n\times\mathbb{S}_+^n)$, there exists a constant $\cC_{\mathrm{pri}}$, depending only on the system parameters, such that
\vspace{-1mm}
\begin{equation}\label{th3-eq1}
\sup_{s\in [0,T]}\left|\begin{pmatrix}
\operatorname{vec}(P_1(s))\\\operatorname{vec}(P_2(s))
\end{pmatrix} \right |\leq \mathcal{C}_{\mathrm{pri}}.\vspace{-1mm}
\end{equation}
Then ERE \eqref{ERE-2}--\eqref{Th1-Th2} admits a unique solution on the entire interval $[0,T]$.
\end{theorem}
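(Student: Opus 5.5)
We will treat ERE \eqref{ERE-2}--\eqref{Th1-Th2} as a backward ODE $\dot{\mathbf P}=F(t,\mathbf P)$, $\mathbf P(T)=(G_1,G_2)$, in the unknown $\mathbf P=(\operatorname{vec}(P_1),\operatorname{vec}(P_2))$, and apply Lemma \ref{lm2.3}. Here $F$ is obtained by substituting $\Theta_2^*(\bar\Theta_1)$ and $\bar\Theta_1$ from \eqref{Th1-Th2}. The argument has three steps: local existence and uniqueness, preservation of positive semidefiniteness, and continuation using the a priori bound \eqref{th3-eq1}.

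\textbf{Step 1 (local solvability).} Fix the domain
$$\mathcal D=[0,T]\times\{\mathbf P:|\mathbf P-\mathbf P_T|\le b\},$$
with $b$ to be chosen later. $F$ is measurable in $t$, since the coefficients are only $L^\infty$. It is a rational function of $\mathbf P$. Its only possible singularities are the inverses $(R_2+D_2^\top P_2D_2)^{-1}$ and $[R_1+\mathbf D(P_2)^\top P_1\mathbf D(P_2)]^{-1}$. To control them, we work on the set where the symmetric parts satisfy $P_1\ge -\eta I$ and $P_2\ge-\eta I$, with $\eta>0$ small relative to $\delta$ and the $L^\infty$ norms of $D_1,D_2$. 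On that set, Assumption \ref{H2} gives $R_2+D_2^\top P_2D_2\ge \frac\delta2 I$. Moreover $\mathbf D(P_2)$ is then bounded, so $R_1+\mathbf D^\top P_1\mathbf D\ge\frac\delta2 I$ as well.

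We therefore replace $F$ by a modification that agrees with it on this set. Alternatively, we simply note that on $\mathcal D$ intersected with a bounded, nearly-PSD region, $F$ is smooth in $\mathbf P$ with bounds uniform in $t$. In either case the Carathéodory condition and the local Lipschitz condition of Lemma \ref{lm2.3} hold. Lemma \ref{lm2.3} then yields a unique solution on some interval $[T-\delta_0,T]$, and this solution continues uniquely until it reaches the boundary of the admissible region.

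\textbf{Step 2 (positivity along the solution).} On any interval where the solution exists, we show that $P_1,P_2$ stay in $\mathbb S^n_+$. Symmetry follows from uniqueness, because the transposed pair solves the same equation.

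\begin{itemize}
\item For $P_2$: freeze $\bar\Theta_1$ as given by the solution. It lies in $L^\infty$ on that interval because the inverses are bounded there. Then $P_2$ is exactly the Riccati solution of Proposition \ref{pro2.1} with $\Theta_1=\bar\Theta_1$. That proposition gives $\frac12\langle P_2(t)\xi,\xi\rangle$ as an optimal value of a nonnegative cost, so $P_2\ge0$.
\item For $P_1$: it solves a linear Lyapunov equation with coefficients $\mathbf A,\mathbf C$ in $L^\infty$, nonnegative source $Q_1+\bar\Theta_1^\top R_1\bar\Theta_1$, and terminal value $G_1\ge0$. By \cite[Lemma 7.3]{Yong-Zhou1999}, $P_1\ge0$.
\end{itemize}

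Consequently the solution never reaches the part of the boundary where positivity would fail. It also never reaches the part where the inverses degenerate, since with $P_i\ge0$ both matrices are $\ge\delta I$.

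\textbf{Step 3 (continuation).} Take $b$ larger than $2\mathcal C_{\mathrm{pri}}+|\mathbf P_T|$. Let $\tau$ be the infimum of the times down to which a solution in $C([\tau,T];\mathbb S^n_+\times\mathbb S^n_+)$ exists.

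Suppose $\tau>0$. Restricted to $[s,T]$, the solution is a solution of the ERE on that subinterval. Since the hypothesis's constant depends only on the system parameters, and any horizon $\le T$ is dominated by the $[0,T]$ constant, \eqref{th3-eq1} gives $|\mathbf P(s)|\le\mathcal C_{\mathrm{pri}}$ on $(\tau,T]$. Within the region $|\mathbf P|\le\mathcal C_{\mathrm{pri}}$, $P_i\ge0$, $F$ is bounded and uniformly Lipschitz. Hence $\mathbf P(\tau^+)$ exists, and restarting Lemma \ref{lm2.3} from time $\tau$ extends the solution below $\tau$. This contradicts the definition of $\tau$, so the solution exists on all of $[0,T]$. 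Uniqueness follows from the local Lipschitz property on bounded PSD sets via Gronwall.

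\textbf{Main obstacle.} The main obstacle is the application of the a priori hypothesis. It is stated for solutions on the whole interval $[0,T]$, whereas we have only a solution on $[\tau,T]$.

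The first fix is time translation: extend the coefficients to $[0,T-\tau']$ by shifting, and note that the a priori constants of the paper depend only on norms of the coefficients and on $T$, monotonically. The fallback is to interpret \eqref{th3-eq1} as holding on every terminal subinterval $[t,T]$. In that reading the bound on $[\tau,T]$ is exactly the hypothesis applied to the subinterval, and the argument goes through directly.
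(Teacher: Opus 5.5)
Your argument is essentially the paper's: vectorize the ERE, get Lipschitz bounds for the right-hand side on a bounded domain, apply Lemma \ref{lm2.3}, and use the a priori bound to keep the solution away from the spatial part of $\partial\mathcal{D}$ so that it continues down to $t=0$. Your additional care makes explicit three points the paper's proof uses tacitly: keeping the two inverses nondegenerate near the positive semidefinite cone, propagating $P_1,P_2\geq 0$ along the solution, and reading \eqref{th3-eq1} as a bound for solutions on every terminal interval $[t,T]$. One small correction: symmetry does not follow from ``the transposed pair solves the same equation''. The feedback terms $\bar\Theta_1$ and $\Theta_2^*(\bar\Theta_1)$ are not transpose-equivariant for nonsymmetric arguments, so that claim fails. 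Symmetry does follow because the right-hand side maps $\mathbb{S}^n\times\mathbb{S}^n$ into itself, so you can pose the ODE on the symmetric subspace from the outset.
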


We now present three scenarios under which the a priori estimate \eqref{th3-eq1} holds, each corresponding to a different structural condition on the follower's control authority over the drift and diffusion.

\begin{theorem}\label{th4}
Let Assumption \ref{H2} hold. Then ERE \eqref{ERE-2}--\eqref{Th1-Th2} admits a priori estimates under any of the following conditions:
%\vspace{-1mm}
\begin{itemize}
\item {\bf Case (i):} \text{One-dimensional case with $|D_2|>\delta>0$};

\item {\bf Case (ii):} \text{One-dimensional case with $D_2=0$};

\item {\bf Case (iii):} \text{High-dimensional case with $D_2=0$},
$
\mathrm{Rank}\big(B_2(t)\big)=n$,  $B_2(\cdot)$ and $R_2(\cdot)$ are continuously differentiable on $[0,T]$.
\end{itemize}
%\vspace{-1mm}
\end{theorem}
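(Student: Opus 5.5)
The plan is to bound $P_1$ and $P_2$ separately: a uniform lower bound $0$ and a uniform upper bound, with constants depending only on the coefficients and $T$. Since both are positive semidefinite, this gives \eqref{th3-eq1}.

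\textbf{Step 1: nonnegativity and Lyapunov form.} Suppose $(P_1,P_2)\in C([0,T];\mathbb{S}_+^n\times\mathbb{S}_+^n)$ solves \eqref{ERE-2}--\eqref{Th1-Th2}. Then $\bar\Theta_1$ and $\Theta_2^*(\bar\Theta_1)$ are bounded measurable, so both equations in \eqref{ERE-2} are linear Lyapunov equations driven by the common closed-loop pair $(\mathbf{A},\mathbf{C})$. Their sources $Q_i+\Theta_i^\top R_i\Theta_i$ are nonnegative, so $P_i\ge 0$ automatically.

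\textbf{Step 2: bound on $P_2$.} By Proposition \ref{pro2.1}, $\frac12\langle P_2(t)x,x\rangle$ is the follower's value function under the leader feedback $\bar\Theta_1$. Comparing with the follower control $v=0$ gives
\[
\langle P_2(t)x,x\rangle\le \mE\Big[\int_t^T\langle Q_2X^0,X^0\rangle ds+\langle G_2X^0(T),X^0(T)\rangle\Big],
\]
where $X^0$ is the state under $(\bar\Theta_1,0)$. This bound still depends on $\bar\Theta_1$, which in turn involves $P_1$ and $P_2$, so the two bounds must be closed jointly.

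\textbf{Case (ii), $n=1$, $D_2=0$.} Here $\mathbf{B}=B_1$, $\mathbf{D}=D_1$, and
\[
\bar\Theta_1=-(R_1+D_1^2P_1)^{-1}(B_1+D_1C)P_1 .
\]
Hence $|\bar\Theta_1|\le |B_1+D_1C|P_1/(R_1+D_1^2P_1)$, and $|D_1\bar\Theta_1|$ is bounded uniformly in $P_1\ge0$ whenever $D_1\ne0$. The $P_1$ equation reads
\[
\dot P_1=-\big[2\mathbf{A}P_1+\mathbf{C}^2P_1+Q_1+R_1\bar\Theta_1^2\big].
\]
Substituting $\bar\Theta_1$ into the first-order condition turns it into the scalar Riccati form
\[
\dot P_1=-\Big[2\tilde A P_1+\tilde C^2P_1+Q_1-\frac{(B_1+D_1C)^2P_1^2}{R_1+D_1^2P_1}\Big],
\]
with $\tilde A=A-B_2^2R_2^{-1}P_2$ and $\tilde C=C$. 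The quadratic term has a favorable sign, and $P_2\ge0$ makes $\tilde A\le |A|$. Gronwall then gives $P_1\le \mathcal{C}$ independently of $P_2$. With $\bar\Theta_1$ bounded, the $P_2$ equation is a standard Riccati equation with bounded coefficients, so $P_2$ is bounded by Step 2.

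\textbf{Case (i), $n=1$, $|D_2|>\delta$.} A direct computation gives $\mathbf{D}(P_2)=D_1R_2/(R_2+D_2^2P_2)$, and
\[
\mathbf{C}=\frac{R_2(C+D_1\bar\Theta_1)-D_2B_2P_2}{R_2+D_2^2P_2}.
\]
I would again substitute the first-order condition \eqref{th1-eq1} to write the $P_1$ equation as a Riccati equation with a nonpositive quadratic term plus a linear term whose coefficient is $2\tilde A+\tilde C^2$. I expect $\tilde A$ and $\tilde C$ to be bounded uniformly in $P_2\ge0$, precisely because $D_2^2P_2$ dominates the numerators $B_2P_2$ and $D_2P_2C$. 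The same dominance should bound $\Theta_2^*$ and the leader's effective diffusion $C+D_1\bar\Theta_1+D_2\Theta_2^*$. That yields Gronwall bounds for $P_1$ first and then $P_2$, through the $v=0$ comparison or through the first equation of \eqref{ERE-2} directly.

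\textbf{Case (iii), high dimension, $D_2=0$, $\mathrm{Rank}\,B_2=n$.} Here $\mathbf{B}=B_1$, $\mathbf{D}=D_1$, and
\[
\mathbf{A}=A-B_2R_2^{-1}B_2^\top P_2+B_1\bar\Theta_1 .
\]
The $P_1$ equation becomes a Riccati equation with drift $A-B_2R_2^{-1}B_2^\top P_2$ coupled through $P_2$. The $P_2$ equation contains the cross term $-\bar\Theta_1^\top B_1^\top P_2-P_2B_1\bar\Theta_1$ and the term $P_2D_1\bar\Theta_1$, with $\bar\Theta_1$ linear in $P_1$. The plan is to use full rank of $B_2$, so that $B_2R_2^{-1}B_2^\top\ge cI$ and the quadratic term $-P_2B_2R_2^{-1}B_2^\top P_2\le -c|P_2|^2$ dominates. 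Differentiability of $B_2$ and $R_2$ would enter through a transformed variable, for example $K=B_2^\top P_2$ or the product $(B_2R_2^{-1}B_2^\top)^{1/2}P_2(\cdot)$, whose time derivative must exist. The key step is to write $\mathrm{tr}\,P_1$ or the $P_1$-value function via the Lyapunov representation, and use the damping $-B_2R_2^{-1}B_2^\top P_2$ in the drift of $P_1$, which is contractive since $P_2\ge0$, to bound $P_1$ independently of $P_2$. A bound $\|P_1\|\le\mathcal{C}$ then bounds $\bar\Theta_1$ by $\mathcal{C}(1+\|P_1\|)$. Finally, the $P_2$ equation, a Riccati equation with coefficients bounded in terms of $P_1$, is bounded by comparison with $v=0$.

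\textbf{Main obstacle.} The hardest step is showing that the drift $\mathbf{A}$ in the $P_1$ equation does not destroy the estimate when $P_2$ is large, particularly in Case (iii), where the matrix $B_2R_2^{-1}B_2^\top P_2$ need not be symmetric and so $\mathbf{A}+\mathbf{A}^\top$ may fail to be contractive. To handle this I would first try the time-varying weighting change of variables $\tilde P_1=M P_1 M$, using $C^1$ regularity of $B_2$ and $R_2$, and check that it yields a dissipative sign.
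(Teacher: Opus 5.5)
Your Cases (i) and (ii) are correct and follow the paper's argument. Substituting the first-order condition exposes the nonpositive term $-\bar\Theta_1^\top(R_1+\mathbf{D}(P_2)^\top P_1\mathbf{D}(P_2))\bar\Theta_1$. The remaining $P_2$-dependent coefficients are controlled uniformly in $P_2\ge 0$: by $|D_2|>\delta$ in Case (i), and by the sign of $-2B_2^2R_2^{-1}P_2P_1$ in Case (ii). Gronwall then bounds $P_1$, hence $\bar\Theta_1$, and $P_2$ follows by dropping the nonnegative quadratic term, which is what your $v=0$ comparison does.

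The gap is Case (iii). The whole difficulty there is to bound $P_1$ uniformly in $P_2$. Equivalently, you must bound the fundamental solution $\Phi$ of $\partial_s\Phi(s,t)=-S(s)P_2(s)\Phi(s,t)$, $S:=B_2R_2^{-1}B_2^\top$, which enters the representation $P_1(t)=\Phi(T,t)^\top G_1\Phi(T,t)+\int_t^T\Phi(s,t)^\top[A^\top P_1+P_1A+C^\top P_1C+Q_1-\bar\Theta_1^\top(R_1+D_1^\top P_1D_1)\bar\Theta_1]\Phi(s,t)\,ds$. Your key step asserts that this damping is contractive because $P_2\ge0$, and that is false. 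Since $SP_2$ is not symmetric, $\frac{d}{ds}|\Phi x|^2=-2\langle SP_2\Phi x,\Phi x\rangle$ can be positive. For example, take $P_2=vv^\top$ with $Sv$ not parallel to $v$, and pick $y$ with $\langle v,y\rangle=1$, $\langle Sv,y\rangle=-1$. The growth rate is proportional to $|P_2|$, for which no bound is available at this stage. Your last paragraph recognizes the issue, but the remedy is left as an unspecified trial. Neither the quadratic-dominance idea for $P_2$ nor the variable $B_2^\top P_2$ addresses it.

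The missing idea is the specific similarity $S^{-1/2}(SP_2)S^{1/2}=S^{1/2}P_2S^{1/2}\ge0$. The paper sets $\Pi(s,t)=S(s)^{-1/2}\Phi(s,t)$, so that $\partial_s\Pi=[\frac{d}{ds}(S^{-1/2})\,S^{1/2}-S^{1/2}P_2S^{1/2}]\Pi$. The flow $\Psi$ of $-S^{1/2}P_2S^{1/2}$ satisfies $\partial_s\mathrm{Tr}(\Psi^\top\Psi)\le0$, hence $|\Psi|\le\sqrt n$. The frame term $\frac{d}{ds}(S^{-1/2})\,S^{1/2}$ has no sign but is bounded. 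This is exactly where $S\ge\nu I$ (from the rank condition) and $S^{\pm1/2}\in C^1$ (Lemma~\ref{lm5.1}, from $B_2,R_2\in C^1$) are used. Gronwall then gives $|\Phi|\le\mathcal{C}$ independently of $P_2$, and dropping the nonpositive $\bar\Theta_1$-term in the representation bounds $P_1$.

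In your notation this is $M=S^{1/2}$. Then $\tilde P_1=S^{1/2}P_1S^{1/2}$ obeys a backward equation whose $P_2$-part is $\tilde P_2\tilde P_1+\tilde P_1\tilde P_2$ with $\tilde P_2=S^{1/2}P_2S^{1/2}\ge0$. The remaining terms come from $\frac{d}{ds}S^{1/2}$; they are \emph{not} dissipative and must be absorbed by Gronwall rather than by a sign. A slightly cheaper variant uses the weighted norm: along $y=\Phi x$ one has $\frac{d}{ds}\langle S^{-1}y,y\rangle=\langle (S^{-1})'y,y\rangle-2\langle P_2y,y\rangle\le|(S^{-1})'|\,|y|^2$, which needs only $S^{-1}\in C^1$ and no matrix square roots. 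Once $P_1$ is bounded, your remaining steps for $\bar\Theta_1$ and $P_2$ go through as stated.
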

%
%\vspace{-2mm}
Cases (i) and (ii) are one-dimensional but allow  control-dependent diffusion; Case (iii) is high-dimensional but requires $D_2 = 0$ and a  full-rank condition on $B_2$.

\begin{remark}
The conditions in Theorem \ref{th4} are not mere technicalities. They ensure the leader can maintain a time-consistent, no-regret equilibrium by keeping the follower's influence on uncertainty predictable and controllable. We explain each case intuitively.

Case (i): One-dimensional, $|D_2| > \delta > 0$ (non‑degenerate diffusion effect of the follower). Here, the follower's control $v$ directly affects the diffusion via $D_2$, and this effect is uniformly non-degenerate. This allows the leader to invert the follower's influence on volatility and obtain a well-defined Riccati equation. Without non‑degeneracy, the follower could unpredictably amplify or suppress noise, preventing a stable, time-consistent strategy.

Case (ii): One-dimensional, $D_2 = 0$ (follower does not affect diffusion). The follower's control enters only the drift, eliminating any risk of additional volatility. The leader can then focus on steering the mean dynamics without worrying about the follower's effect on uncertainty. This is common in the feedback Stackelberg literature \cite{Basar-Olsder-1999,Bensoussan-Chen-Sethi-2014,Huang-Shi-2024}, but we remove the short-time restriction and prove global well‑posedness for any finite horizon in the one‑dimensional setting.

Case (iii): High-dimensional, $D_2 = 0$, $\operatorname{Rank}(B_2(t)) = n$ (full drift controllability). In higher dimensions, eliminating the follower's diffusion influence is insufficient; the follower must also be able to influence every direction of the state space through the drift. The rank condition ensures full controllability, allowing the leader to delegate drift stabilization to the follower while focusing on her own objective. Without it, some state components would be uncontrollable, exposing the leader to unpredictable dynamics and breaking time-consistency.

From a management  or economics perspective, these conditions reflect a hierarchy where the leader anticipates the follower's globally optimal response, and the follower either avoids introducing extra risk ($D_2 = 0$) or acts predictably even when influencing risk ($|D_2| > \delta$). In high-dimensional problems, the follower must influence every aspect of the system (full rank) for the leader's strategy to remain time-consistent regardless of the initial state. These are natural requirements for a well-functioning hierarchy in a stochastic environment.
\end{remark}

\begin{remark}
Combining Theorems \ref{th3} and \ref{th4}, the ERE \eqref{ERE-2}--\eqref{Th1-Th2} admits a unique global solution on any finite horizon $T>0$ in Cases (i)--(iii). Consequently, the leader's closed-loop equilibrium strategy exists (Theorem \ref{th1}), or equivalently, a feedback Stackelberg equilibrium exists (Theorem \ref{th2}). We highlight two points:

1. {\bf Regardless of time-inconsistency}.
To our knowledge, general conditions for pre-committed closed-loop Stackelberg solutions are unavailable except for special examples \cite{Bensoussan-Chen-Sethi-2015,Hernandez-etal-2024}.

2. {\bf Taking time-inconsistency into account}.
In contrast, existing results on feedback Stackelberg equilibria are largely restricted to control-independent diffusion and require $T$ sufficiently small \cite{Basar-Olsder-1999,Huang-Shi-2024}.
\end{remark}

\vspace{-1mm}
\section{Application}\label{sec5}

In this section, we apply Theorem \ref{th4} to an asset management problem with two agents operating under asymmetric information. Numerical simulations validate the theoretical predictions. The model is particularly well-suited for Stackelberg LQ problems where control enters the diffusion term.

\vspace{-1mm}

\subsection{Model and solutions}\label{sec5.1}

Consider an asset $X$ managed by two agents, both of whom derive benefit from its terminal value $X(T)$. The financial market consists of a risk-free bond $S^0$ and two risky stocks $S^1, S^2$ governed by: 
\vspace{-1mm}
\begin{equation*}
d S^0 = r S^0 ds, \quad d S^i = \mu_i S^i ds + \sigma_i S^i dW(s), \quad i=1,2,\vspace{-1mm}
\end{equation*}
where $r > 0$ is the risk-free rate, and $\mu_i, \sigma_i > 0$ are the appreciation and volatility rates, respectively. Let $u_i$ be the amount Agent $i$ invests in stock $i$. The dynamics of the property $X$ follows:
\vspace{-2mm}
\begin{equation}\label{model}
\begin{cases}\ds
dX = \Big[ r X + \sum_{i=1}^2 (\mu_i - r)u_i \Big] ds + \sum_{i=1}^2 \sigma_i u_i dW(s), \q \mbox{in } [0,T], \\
\ns\ds X(0) = x_0.
\end{cases}\vspace{-1mm}
\end{equation}
Motivated  by G\^{a}rleanu and Pedersen \cite{Garleanu-Pedersen-2016},  we adopt the following conventions:
\begin{enumerate}
\item[(i)] \textbf{Terminal Goal:} Both agents aim to minimize the variance from a target $z\in \dbR$, where $z \ge x_0 e^{rT}$. The objective is to minimize $(X(T) - z)^2$.
\item[(ii)] \textbf{Transaction Costs:} To reflect risk-based compensation, Agent $i$ incurs a cost measured by $\sigma_i u_i^2(s)$.
\item[(iii)] \textbf{Information Asymmetry:} Agent 1 (Leader) possesses full information $(\mu_1, \mu_2, \sigma_1, \sigma_2)$, whereas Agent 2 (Follower) only observes her own market parameters $(\mu_2, \sigma_2)$.
\end{enumerate}
Under these conventions, the cost functional for Agent $i$ at each initial pair $(t,\xi)$ is
\vspace{-1mm}
\begin{equation*}
\mathcal{J}_i(t, \xi; u_1, u_2) = \mathbb{E}_t \[ \int_{t}^{T} \sigma_i u_i^2(s) ds + (X(T) - z)^2 \].\vspace{-1mm}
\end{equation*}

To deal with the above model, we take the following steps:

\ss

{\bf Step 1.} In this step, we introduce an equivalent auxiliary control problem, and reformulate the leader's control problem accordingly.

By defining the shifted state $\wt X(s) := X(s) - z e^{-r(T-s)}$, we introduce the following equivalent auxiliary control problem:
\vspace{-1mm}
\begin{equation*}
\begin{cases}
\begin{cases}\ds
d\wt X=\Big[ r \wt X+\sum_{i=1}^{2}(\mu_i-r)u_i \Big]ds+   \sum_{i=1}^{2}\sigma_i u_i dW(s), &\q \mbox{in } [t,T],\\\ns\ds
\wt X(t)=\xi-z e^{-r(T-t)},
\end{cases}\\\ns\ds
\wt \cJ_i\Big(t,\xi-z e^{-r(T-t)};u_1,u_2\Big):=\mE_t \[\int_{t}^{T}  \langle \sigma_i u_i,u_i \rangle  ds + \langle \wt X(T),\wt X(T) \rangle\],\q (i=1,2).
\end{cases}
\end{equation*}
It is clear that
\begin{equation}\label{ex-eq1}
\cJ_i(t,\xi;u_1,u_2)=\wt \cJ_i\Big(t,\xi-z e^{-r(T-t)};u_1,u_2\Big),\q i=1,2.
\end{equation}

Now we focus on the auxiliary control problem. Following the procedure in Subsection \ref{se-2.1}, for Agent 1 choosing $u_1=\Theta_1 \wt X$, we solve the optimal control problem for Agent 2, that is
\vspace{-1mm}
$$
\min_{u_2}\wt \cJ_2\Big(t,\xi-z e^{-r(T-t)};\Theta_1\wt X,u_2\Big).\vspace{-1mm}
$$
From Proposition \ref{pro2.1}, we have $u_2^*=\Theta_2^*(\Theta_1)\wt X$,
where
\vspace{-1mm}
$$
\Theta_2^*(\Theta_1)=-\big[\sigma_2+\sigma_2^2 P_2\big]^{-1}\big[(\mu_2-r)P_2+ \sigma_2 P_2 \sigma_1 \Theta_1\big],\vspace{-1mm}
$$
$P_2$ solves the following equation
\vspace{-1mm}
\begin{equation*}
\begin{cases}
\dot{P_2}+2[r+(\mu_1-r)\Theta_1]P_2+ \sigma_1^2\Theta_1^2P_2-[P_2(\mu_2-r)+\sigma_1 \Theta_1 P_2 \sigma_2]^2 [\sigma_2+\sigma_2^2P_2]^{-1}=0,\q \mbox{in } [t,T],\\\ns\ds
P_2(T)=1,
\end{cases}\vspace{-1mm}
\end{equation*}
and $\widetilde{X}$ is the unique solution of the closed-loop system under $u_2^* = \Theta_2^*(\Theta_1)\widetilde{X}$. Consequently, Agent 1's control problem becomes 
\vspace{-1mm}
\begin{eqnarray}\label{auxi-agent1}
\!\!\!\begin{cases}
\begin{cases}
d\wt X  = \big\{ \big[r+(\mu_1 - r)\Theta_1\big] \wt X  +  (\mu_2-r)\Theta_2^*(\Theta_1)\wt X \big\} ds + \big\{  \sigma_1 \Theta_1 \wt X + \sigma_1 \Theta_2^*(\Theta_1)\wt X  \big\} dW(s),  &  \mbox{in }[t,T],\\\ns\ds
\dot{P_2}=-2\big[r + (\mu_1 - r)\Theta_1\big]P_2 -  \sigma_1^2\Theta_1^2P_2 + \big[P_2(\mu_2 - r) + \sigma_1 \Theta_1 P_2 \sigma_2\big]^2 \big[\sigma_2 + \sigma_2^2P_2\big]^{-1},  &  \mbox{in }[t,T], \\\ns\ds
\wt X(t)=\xi-z e^{-r(T-t)},\q P_2(T)=1,
\end{cases}\\\ns\ds
\wt \cJ_1\Big(t, \xi-z e^{-r(T-t)}; \Theta_1 \wt X, \Theta_2^*(\Theta_1)\wt X\Big) = \mathbb{E}_t \[ \int_{t}^{T} \sigma_1 \Theta_1^2 \wt X^2 ds + \wt X(T)^2 \].
\end{cases}\vspace{-1mm}
\end{eqnarray}

Motivated from Definition \ref{De-closedloop}, we introduce the following definition for \eqref{auxi-agent1}:%\vspace{-1mm}

\begin{definition}\label{de-5.1}
A function $\bar{\Theta}_1 \in L^\infty(0,T;\dbR)$ is called a closed-loop equilibrium strategy for Problem \eqref{auxi-agent1} if for any $ (t,\xi,\dbV_1)  \in [0,T)\times L_{\cF_t}^2(\Omega;\dbR)\times L^\infty(0,T;\dbR),$
the following variational inequality holds:
\vspace{-1mm}
\begin{equation}\label{ex-eq2}
\liminf_{\e\to 0+} \frac{\wt \cJ_1\Big(t,\xi-z e^{-r(T-t)};\bar{\Theta}^\e_1 \wt X^\e,\Theta_2^*(\bar{\Theta}^\e_1)\wt X^\e\Big)-
\wt \cJ_1\Big(t,\xi-z e^{-r(T-t)};\bar{\Theta}_1 \wt X,\Theta_2^*(\bar{\Theta}_1)\wt X\Big)}{\e}\geq 0,  \q \as
\end{equation}
where
$\bar{\Theta}_1^\e(s)=\bar{\Theta}_1(s)+1_{[t,t+\e]}\dbV(s)$,
and $\wt X^\e(\cd)$ denotes the solution of the state equation in \eqref{auxi-agent1} corresponding to the perturbed control pair $\big( \bar{\Theta}^\e_1 \wt X^\e,\Theta_2^*(\bar{\Theta}^\e_1) \wt X^\e\big)$.
\end{definition}

{\bf Step 2.} In this step, we derive $\bar{\Theta}_1$ from Definition \ref{de-5.1} and obtain the time-consistent equilibrium controls for both agents.

Since $\xi\in L_{\cF_t}^2(\Omega;\dbR)$ is arbitrary in Definition \ref{de-5.1}, $\xi-z e^{-r(T-t)}$ is also arbitrary,
and thus we can replace $\xi-z e^{-r(T-t)}$ in \eqref{ex-eq2} by any  $\eta \in L_{\cF_t}^2(\Omega;\dbR)$. Then Problem \eqref{auxi-agent1} and Definition \ref{de-5.1} coincide with Problem \eqref{ConPro-leader} and Definition \ref{De-closedloop}, respectively. Setting $A = r$, $C = 0$, $B_i = \mu_i - r$, $D_i = R_i = \sigma_i$, $G_i = 1$ ($i = 1,2$) in \eqref{ERE} yields the following ERE:
\vspace{-1mm}
\begin{align}\label{ex-ERE}
\begin{cases}
\dot{P_2}=-2\big[r+(\mu_1-r)\bar{\Theta}_1\big]P_2- \sigma_1^2\bar{\Theta}_1^2P_2+\big[P_2(\mu_2-r)+\sigma_1 \bar{\Theta}_1 P_2 \sigma_2\big]^2\big[\sigma_2+\sigma_2^2 P_2\big]^{-1}, & \mbox{in } [0,T],\\\ns\ds
\dot{P_1}=- 2P_1\big\{r+(\mu_1-r)\bar{\Theta}_1-(\mu_2-r) (\sigma_2+\sigma_2^2P_2)^{-1} \big[(\mu_2-r)P_2+\sigma_2P_2\sigma_1\bar{\Theta}_1\big] \big\}\\\ns\ds
\qq\q - P_1\big\{\sigma_1 \bar{\Theta}_1 -\sigma_2 \big(\sigma_2+\sigma_2^2 P_2\big)^{-1}\big[(\mu_2-r)P_2+\sigma_2 P_2\sigma_1\bar{\Theta}_1\big] \big\}^2 - \sigma_1 \bar{\Theta}_1^2 ,& \mbox{in } [0,T],\\\ns\ds
P_2(T)=1,\q P_1(T)=1,
\end{cases}\vspace{-1mm}
\end{align}
and
\vspace{-2mm}
\begin{equation}\label{ex-Th1}
\begin{aligned}
\bar{\Theta}_1&=-\Big\{\sigma_1+P_1 \big[\sigma_1-\sigma_2(\sigma_2+\sigma_2^2P_2)^{-1}\sigma_2P_2\sigma_1 \big]^2\Big\}^{-1} \Big\{P_1 \big[(\mu_1-r)-(\mu_2-r)(\sigma_2+\sigma_2^2 P_2)^{-1} \sigma_2 P_2 \sigma_1\big]\\&
\qq-P_1\big[\sigma_1-\sigma_2(\sigma_2+\sigma_2^2P_2)^{-1}\sigma_2P_2\sigma_1 \big]\big[\sigma_2 (\sigma_2+\sigma_2^2P_2)^{-1}(\mu_2-r)P_2 \big] \Big\}.%\vspace{-2mm}
\end{aligned}	\vspace{-1mm}
\end{equation}

From Cases (i) and (ii) of Theorem \ref{th4}, in the one-dimensional case with constant system parameters, the ERE \eqref{ERE} always admits a unique solution under positivity assumptions. Hence, \eqref{ex-ERE} admits a unique solution $(P_1, P_2)$, and $\bar{\Theta}_1$ is given by \eqref{ex-Th1}. Finally, letting both agents choose
\vspace{-1mm}
\begin{equation}\label{u_1u_2}
u_1^*(s,X(s))=\bar{\Theta}_1(s) \big[X(s)-ze^{-r(T-s)}\big],\qq
u_2^*(s,X(s))= \Theta_2^*(\bar{\Theta}_1)(s)\big[ X(s)-ze^{-r(T-s)}\big],\vspace{-1mm}
\end{equation}
they obtain the time-consistent equilibrium control for the original control problem.

\begin{remark}
Although our analysis is set within a Stackelberg LQ framework, it offers useful insights for more general nonlinear problems. In many applications where the state dynamics admit a local linear approximation and the cost functional is a quadratic one, the resulting nonlinear problem can be treated--at least approximately--within an LQ setting. In such cases, our results remain applicable as useful approximations.
\end{remark}

\vspace{-3mm}

\subsection{Numerical simulations}

We implement a  backward Euler-type scheme for the ERE \eqref{ex-ERE}. The time horizon $[0, T]$ is uniformly discretized into $N$ subintervals with grid points $t_i = i\Delta t$, where $\Delta t = T/N$. The recursion is initialized with terminal conditions $P_{1,N}=1$, $P_{2,N}=1$.

For each backward step $i = N, N-1, \ldots, 1$:

1.  Compute intermediate quantities:
\vspace{-1mm}
$$
I_{2,i} = (\sigma_2 + \sigma_2^2 P_{2,i})^{-1}, \qquad
A_{1,i} = \sigma_1 - \sigma_2 I_{2,i} \sigma_2 P_{2,i} \sigma_1.\vspace{-1mm}
$$

2.  Update feedback strategies:
\vspace{-1mm}
$$
\left\{
\begin{aligned}
\ds	\bar{\Theta}_{1,i-1}&  = -\big(\sigma_1 + P_{1,i} A_{1,i}^2\big)^{-1}\big\{ P_{1,i}\big[(\mu_1 - r) - (\mu_2 - r) I_{2,i} \sigma_2 P_{2,i} \sigma_1\big] - P_{1,i} A_{1,i} \big[\sigma_2 I_{2,i} (\mu_2 - r) P_{2,i}\big]\big\}, \\
\ns\ds	\Theta_{2,i-1}^* & = -I_{2,i}\big[(\mu_2 - r) P_{2,i} + \sigma_2 P_{2,i} \sigma_1 \bar{\Theta}_{1,i-1}\big].
\end{aligned}
\right.\vspace{-1mm}
$$

3.  Compute time derivatives:
\vspace{-1mm}
\begin{equation*}
	\begin{cases}
		\begin{aligned}
			\dot{P}_{1,i} &= -\big\{ 2P_{1,i}\big[r+(\mu_1-r)\bar{\Theta}_{1,i-1} + (\mu_2-r)\Theta_{2, i-1}^* \big]  + P_{1,i} \big[\sigma_1 \bar{\Theta}_{1,i-1} + \sigma_2 \Theta_{2, i-1}^* \big]^2 + \sigma_1 \bar{\Theta}_{1,i-1}^2 \big\}, \\\ns\ds
			\dot{P}_{2,i} &= -2\big[r+(\mu_1-r)\bar{\Theta}_{1,i-1}\big]P_{2,i}- \sigma_1^2\bar{\Theta}_{1,i-1}^2 P_{2,i} + \big[P_{2,i}(\mu_2-r)+\sigma_1 \bar{\Theta}_{1,i-1} P_{2,i} \sigma_2\big]^2 I_{2,i}.
		\end{aligned}
	\end{cases}\vspace{-1mm}
\end{equation*}

4.  Backward Euler update:
$$
P_{j,i-1} = P_{j,i} - \dot{P}_{j,i} \Delta t, \quad j = 1,2.\vspace{-1mm}
$$

This recursive procedure yields the discrete trajectories of the ERE \eqref{ex-ERE}, which are then used in the forward simulation of the wealth process.

We simulate the Stackelberg LQ model from Subsection \ref{sec5.1} with parameters given in Table \ref{tab:params}. To comprehensively illustrate the dynamics, we present results for three distinct sample paths under the feedback Stackelberg equilibrium \eqref{u_1u_2}. Figure \ref{Fig1_P} shows the deterministic solutions $P_1(s)$ and $P_2(s)$ of \eqref{ex-ERE}, which govern the feedback strategies of the leader and follower, respectively. Figure \ref{Fig2_X} displays the stochastic realizations of the wealth process $X(s)$ for the three paths; as expected, all trajectories converge to the terminal wealth target $z$ (black dotted line). Finally, Figure \ref{Fig3_U} compares the dynamic asset allocations $u_1$ (leader, solid blue) and $u_2$ (follower, dashed red) for each of the three paths.

\begin{table}[htbp]
\centering
\caption{Simulation Parameters and Economic Meaning}
\label{tab:params}
\begin{tabular}{lcl}
\hline
\textbf{Parameter} & \textbf{Value} & \textbf{Economic Rationale} \\ \hline
Initial Wealth $x_0$ & 100.0 & Initial principal at $t=0$ \\
Wealth Target $z$ & 200.0 & Desired terminal payoff at $T=10$ \\
Risk-free Rate $r$ & 0.03 & Baseline yield of riskless assets \\
Drift $\mu_1, \mu_2$ & 0.08, 0.10 & Anticipated returns of risky assets \\
Volatility $\sigma_1, \sigma_2$ & 0.15, 0.19 & Standard deviation of market fluctuations \\
Time Horizon $T$ & 10.0 & Strategic planning interval  \\ \hline
\end{tabular}
\end{table}
%\vspace{-1mm}
\begin{figure}[htbp]
\centering
\includegraphics[width=0.5\textwidth]{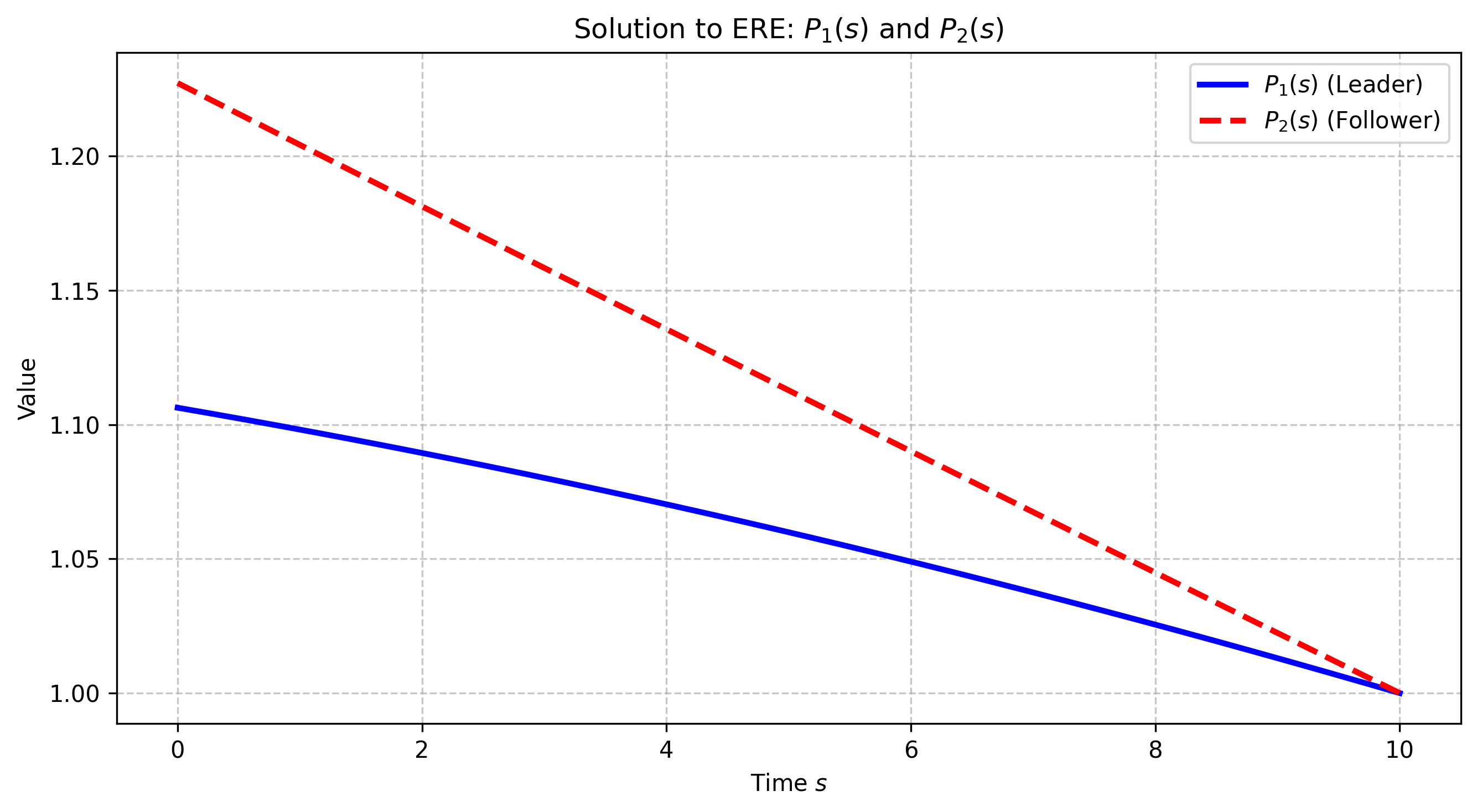} % Ensure filename matches Python output
\caption{\small Deterministic solutions $P_1(s)$ and $P_2(s)$ governing the feedback Stackelberg equilibrium.}
\label{Fig1_P}
\end{figure}

Figure 2 displays three sample paths of the wealth process $X(s)$ under the closed-loop equilibrium strategies. Several observations are in order:

1. {\bf Convergence to the target}. All three paths start from the same initial condition, increase over time, and approach the terminal target $z$ as $s \to T$. This shows that the equilibrium strategies successfully steer wealth toward the goal despite stochastic volatility.

2. {\bf Robustness to noise}.  Although the paths temporarily diverge due to different realizations of the Wiener process, they all converge to $z$ within a narrow band at $s = T$. This indicates that the closed-loop equilibrium strategy is robust to market uncertainty.

3. {\bf No overshooting}.  Unlike some pre-committed strategies that may overshoot and then correct, the equilibrium paths increase monotonically (on average) without significant oscillations. This reflects the time-consistent nature of the strategy: the leader never deviates from a plan that would later require regret-inducing corrections.

%\vspace{-2mm}
\begin{figure}[H]
\centering
\includegraphics[width=0.5\textwidth]{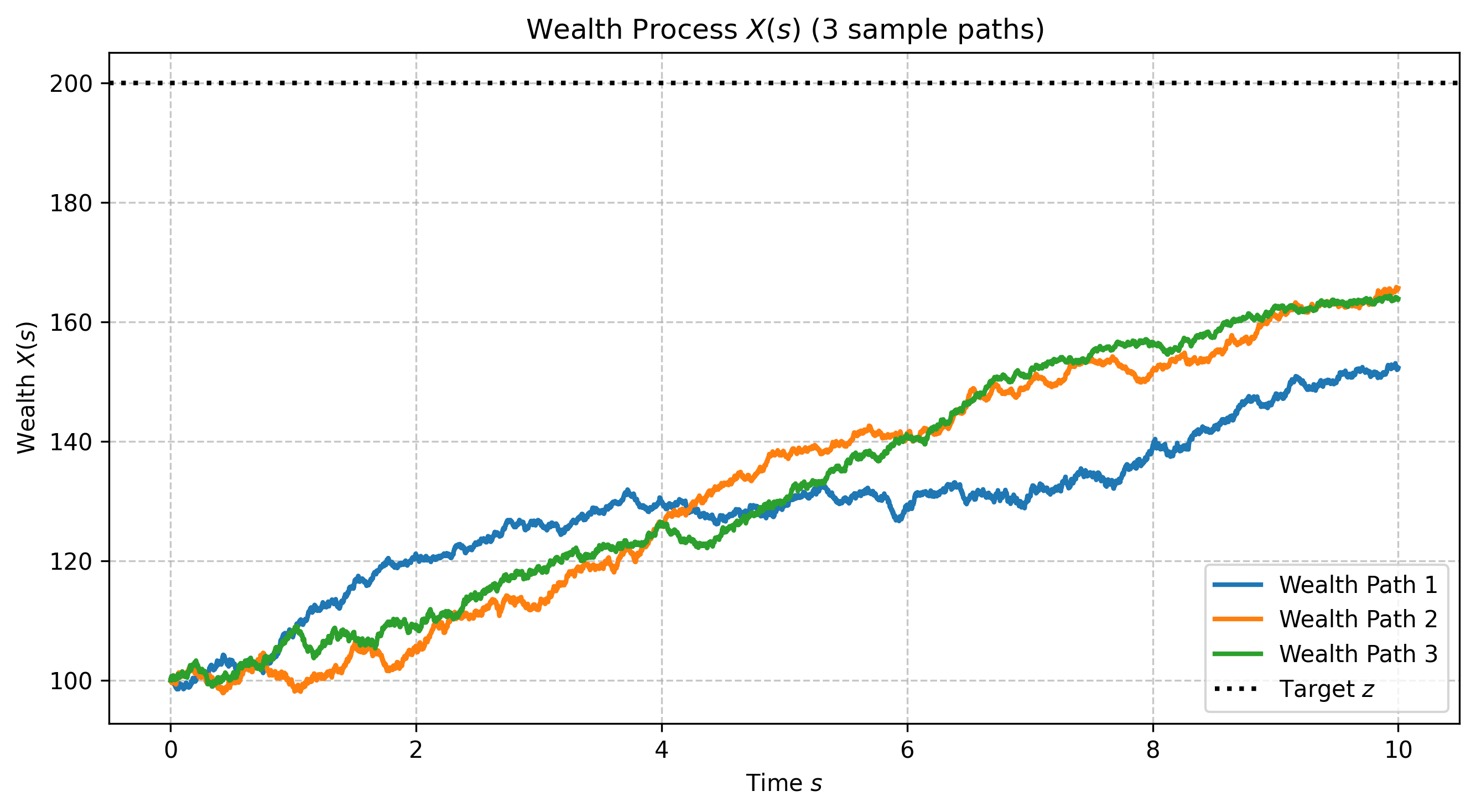} % Ensure filename matches Python output
\caption{\small Three sample paths of the wealth process $X(s)$ converging towards the terminal target $z$.}
\label{Fig2_X}
\end{figure}

Figure 3 compares the control trajectories $u_1(s)$ (leader) and $u_2(s)$ (follower) across  three sample paths.
Note that in the simulation, we have ${\mu_1}/{\sigma_1} > {\mu_2}/{\sigma_2}$. One might therefore expect the leader to invest more aggressively than the follower, i.e., $u_1(s) > u_2(s)$. However, the simulation results reveal the opposite pattern: $u_1(s) < u_2(s)$ for all $s$. This seemingly counterintuitive outcome can be attributed to the strategic interaction between the leader and the follower in a Stackelberg framework:

1. {\bf Leader's strategic leverage}. The magnitude of $u_1(s)$ is consistently smaller than that of $u_2(s)$ throughout the horizon. This reflects the leader's advantageous position: because she anticipates the follower's rational response and internalizes the follower's actions, she can strategically rely on the follower's effort. Consequently, the leader takes more conservative investment positions, achieving her goal with less control exertion.

2. {\bf Follower's reactive burden}. The follower's control $u_2(s)$ is clearly larger in magnitude. This is consistent with the follower's role: given the leader's strategy, the follower must take more aggressive investment positions to optimize her own cost. Lacking the hierarchical power to shape the system, she is forced to bear the primary control burden.

3. {\bf Path dependence}. The three sample paths of each control differ slightly, reflecting dependence on the realized noise. However, the feedback form $u_i(s) = \Theta_i(s)(X(s) - z e^{-r(T-s)})$ ensures that the control at each instant is adapted to the current state, so the variations across paths are natural and do not indicate instability.

\vspace{-1mm}
 \begin{figure}[H]
\centering
\includegraphics[width=0.5\textwidth]{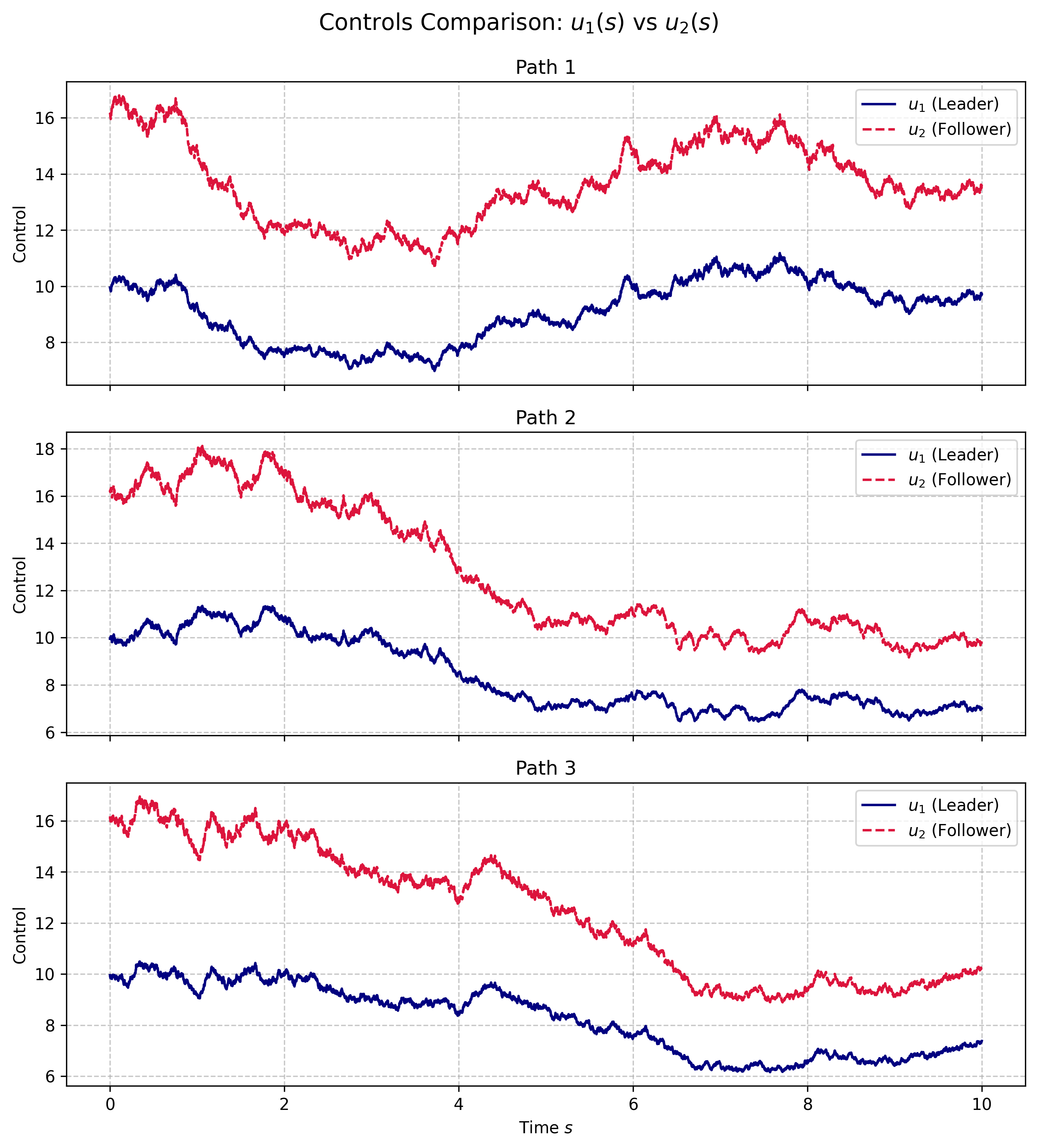} % Ensure filename matches Python output
\caption{\small Comparison of control trajectories $u_1(s)$ (leader) and $u_2(s)$ (follower) across three sample paths.}
\label{Fig3_U}
\end{figure}

\begin{remark}
The numerical simulation presented here is merely an approximation, and the observed phenomena are illustrative rather than mathematically rigorous. A full justification--including a stability analysis of the ERE and a precise interpretation of the approximate equilibrium strategy--is required but falls outside the scope of this paper, and is left for future research.
\end{remark}

\vspace{-2mm}

\section{Proof of the main results}\label{Appendix}

%\vspace{-1mm}

This section collects the proofs of Theorems \ref{th1}, \ref{th2}, \ref{th3}, and \ref{th4}.

%\vspace{-2mm}

\subsection{Proof of Theorem \ref{th1}}\label{Appendix-pr-th1}
\begin{proof}
We divide the proof into two steps:

{\bf Step 1.} In this step, we derive essential estimates for the original system and its perturbed counterpart.

For any initial pair $(t,\xi)\in [0,T) \times L_{\cF_t}^2(\Omega;\dbR^n) $, consider a control strategy $\Theta_1\in L^\infty(0,T;\dbR^{m_1 \times n})$ and its perturbation
\begin{equation}\label{pr-th1-eq1}
\Theta_1^\e := \Theta_1+ 1_{[t,t+\e]} \dbV \text{ with } \dbV(\cd)\in L^\infty(0,T;\dbR^{m_1 \times n}).
\end{equation}
Substituting these into the state equation \eqref{ConPro-leader} yields $P_2$, $P_2^\varepsilon$, and
\begin{equation}
\begin{cases}
dX = \mathbf{A} X ds+ \mathbf{C} X dW(s), & \mbox{in } [t,T],\\
X(t)=\xi,
\end{cases}\q
\begin{cases}
dX^\e= \mathbf{A}^\e X^\e ds+ \mathbf{C}^\e X^\e dW(s), & \mbox{in } [t,T],\\
X^\e(t)=\xi,
\end{cases}
\end{equation}
where
\begin{equation*}
\begin{aligned}
&\mathbf{A}:= \mathbf{A}(\Theta_1,P_2),\q \mathbf{C}:= \mathbf{C}(\Theta_1,P_2),\\\ns\ds
&\mathbf{A}^\e:=\mathbf{A}(\Theta^\e_1,P^\e_2)= A+ B_1 \Theta^\e_1 - B_2  \big[ R_2+ D_2^{\top} P^\e_2 D_2 \big]^{-1} \big[ B_2^\top P^\e_2+ D_2^\top P^\e_2 (C+D_1\Theta^\e_1) \big],\\ \ns\ds
& \mathbf{C}^\e:= \mathbf{C}( \Theta_1^\e,P_2^\e)= C+D_1 \Theta^\e_1  -D_2 \big[ R_2+ D_2^{\top} P^\e_2 D_2 \big]^{-1} \big[ B_2^\top P^\e_2+ D_2^\top P^\e_2 (C+D_1\Theta^\e_1) \big].
\end{aligned}
\end{equation*}
Define $X_0^\varepsilon = X^\varepsilon - X$, $\mathbf{A}_0^\varepsilon = \mathbf{A}^\varepsilon - \mathbf{A}$, $\mathbf{C}_0^\varepsilon = \mathbf{C}^\varepsilon - \mathbf{C}$. Then 
\begin{equation}
\begin{cases}
d X^\e_0 =\big[ \mathbf{A} X^\e_0+  ( \mathbf{A}^\e -\mathbf{A} ) X^\e  \big]ds + \big[  \mathbf{C} X^\e_0+ (\mathbf{C}^\e -\mathbf{C})X^\e \big]dW(s),\q \mbox{in } [t,T],\\
X^\e_0(t)=0.
\end{cases}
\end{equation}
From \eqref{pr-th1-eq1}, $\Theta_1^\varepsilon(s)=\Theta_1(s)$ for $s\in(t+\varepsilon,T]$. Since $P_2$ is uniquely determined by $\Theta_1$ via \eqref{P_2}, we obtain 
\begin{equation}\label{pr-th1-est6}
P^\e_2(s)=P_2(s),\q
\mathbf{A}^\e(s)=\mathbf{A}(s),\q
\mathbf{C}^\e(s)=\mathbf{C}(s),\q \forall s\in (t+\e,T],
\end{equation}
which readily leads to the following estimates for $P_2$ and $P_2^\varepsilon$:
\begin{equation*}\label{pr-th1-est1}
\begin{aligned}
|P_2(\cd)|_{C([t,T];\dbR^{n\times n})}\leq \cC (\Theta_1),\q |P_2^\e(\cd)|_{C([t,T];\dbR^{n\times n})}\leq \cC (\Theta_1,\dbV),\q |P_2^\e(\cd)-P_2(\cd)|_{C([t,T];\dbR^{n\times n})}\leq \cC (\Theta_1,\dbV) \e.
\end{aligned}
\end{equation*}
The boundedness of $P_2$ and $P_2^\varepsilon$, together with standard $L^2$-estimates for linear SDEs, implies
\begin{equation}\label{pr-th1-est2}
\mE_t \Big[ \sup_{s\in [t,T]} |X(s)|^2  \Big]\leq \cC (\Theta_1) |\xi|^2, \quad
\mE_t \Big[ \sup_{s\in [t,T]} |X^\e(s)|^2  \Big]\leq \cC (\Theta_1,\dbV) |\xi|^2.%\vspace{-2mm}
\end{equation}
To obtain estimates for $X^\varepsilon - X$, we first estimate $\mathbf{A}^\varepsilon - \mathbf{A}$ and $\mathbf{C}^\varepsilon - \mathbf{C}$. We decompose the difference:
\begin{equation*}
\mathbf{A}^\e - \mathbf{A} = \big[ \mathbf{A}(\Theta^\e_1, P^\e_2) - \mathbf{A}(\Theta^\e_1, P_2) \big] + \big[ \mathbf{A}(\Theta^\e_1, P_2) - \mathbf{A}(\Theta_1, P_2) \big].
\end{equation*}
The first term is bounded by $\cC(\Theta_1,\dbV)|P^\e_2 - P_2| \leq \cC(\Theta_1,\dbV)\e$.
Since the second term  is linear in $\Theta_1$ (see \eqref{nota}),  we have $\mathbf{A}(\Theta^\e_1, P_2) - \mathbf{A}(\Theta_1, P_2) = \mathbf{B}(P_2)\dbV 1_{[t,t+\e]}$.
Thus, the following estimate holds:
\begin{equation}\label{pr-th1-est3}
\big|\mathbf{A}^\e-\mathbf{A} - \mathbf{B}(P_2)\dbV 1_{[t,t+\e]} \big| \leq \cC(\Theta_1,\dbV) \e.%\vspace{-1mm}
\end{equation}
Similarly, we obtain
\begin{equation}\label{pr-th1-est4}
\big|\mathbf{C}^\e-\mathbf{C} - \mathbf{D}(P_2)\dbV 1_{[t,t+\e]} \big| \leq \cC(\Theta_1,\dbV) \e.
\end{equation}
Using standard $L^2$-estimates for linear SDEs together with \eqref{pr-th1-est2}--\eqref{pr-th1-est4}, we have 
\vspace{-2mm}
\begin{equation}\label{pr-th1-est5}
	\begin{aligned}
\mE_t \Big[ \sup_{s\in [t,T]} |X^\e_0(s)|^2 \Big] &\leq \cC (\Theta_1) \mE_t \int_t^T \Big( |(\mathbf{A}^\e - \mathbf{A})X^\e|^2 + |(\mathbf{C}^\e - \mathbf{C})X^\e|^2 \Big) ds \\
&\leq \cC (\Theta_1) \mE_t \Big[\sup_{s\in [t,T]} |X^\e(s)|^2 \int_t^{t+\e} \cC(\Theta_1,\dbV) \big(1_{[t,t+\e]}+1_{[t,t+\e]}\e\big)  ds \\ &\q  + \sup_{s\in [t,T]} |X^\e(s)|^2 \int_t^{t+\e} \cC(\Theta_1,\dbV) \big(1_{[t,t+\e]}+1_{[t,t+\e]}\e\big)  ds \Big] \\
&\leq \cC (\Theta_1,\dbV) |\xi|^2 \e.
\end{aligned}
\end{equation}

{\bf Step 2.} In this step, we use decoupling techniques to precisely represent the variation of the cost functional and derive the equivalent characterization of the closed-loop equilibrium strategy from it.

Direct computation yields
\vspace{-1mm}
\begin{align*}
&\cJ_1(t,\xi; \Theta_1^\e X^\e,\Theta_2^*({\Theta}^\e_1) X^\e)- \cJ_1(t,\xi; \Theta_1 X,\Theta_2^*({\Theta}_1) X )
\\&=\frac{1}{2}\mE_t \[ \int_{t}^{T} \big[  \langle Q_1 X^\e_0,X^\e_0  \rangle + \langle  R_1(  \Theta_1 X^\e_0 +1_{[t,t+\e]} \dbV X^\e  ),   \Theta_1 X^\e_0 +1_{[t,t+\e]} \dbV X^\e  \rangle  \big] ds +  \langle G_1  X^\e_0 (T), X^\e_0 (T) \rangle  \]\\
&\q +  \mE_t \[ \int_{t}^{T} \big[ \langle Q_1 X,X^\e_0  \rangle + \langle  R_1 \Theta_1 X,   \Theta_1 X^\e_0 +1_{[t,t+\e]} \dbV X^\e  \rangle  \big] ds +  \langle G_1  X(T), X^\e_0 (T) \rangle  \]\\
&= \frac{1}{2}J_1+J_2+ \frac{1}{2}\mE_t  \int_{t}^{t+\e}  \langle  R_1  \dbV X^\e  ,    \dbV X^\e  \rangle   ds
+  \mE_t  \int_{t}^{t+\e}  \langle  R_1 \Theta_1 X,  \dbV X^\e  \rangle   ds+ \mE_t\int_{t}^{t+\e} \langle  \Theta_1^\top R_1\dbV X^\e, X^\e_0 \rangle ds,
\end{align*}
where $J_1$ and $J_2$ are given by 
\vspace{-1mm}
\begin{align*}
J_1&=\mE_t \[ \int_{t}^{T}  \big\langle \big(   Q_1+ \Theta_1^\top R_1 \Theta_1 \big) X^\e_0,X^\e_0  \big\rangle  ds +  \langle G_1  X^\e_0 (T), X^\e_0 (T) \rangle  \],\\
J_2&=  \mE_t \[ \int_{t}^{T}  \big\langle \big( Q_1 + \Theta_1^\top R_1 \Theta_1 \big) X,X^\e_0  \big\rangle  ds +  \langle G_1  X(T), X^\e_0 (T) \rangle  \].\vspace{-1mm}
\end{align*}

For $J_1$, we introduce the following adjoint equation:
\vspace{-1mm}
\begin{equation*}
\begin{cases}
dY_1=-\big[ \mathbf{A}^\top Y_1+\mathbf{C}^\top Z_1+ (Q_1+\Theta_1^\top R_1 \Theta_1) X^\e_0 \big]ds+ Z_1 dW(s), & \mbox{ in } [t,T],\\
Y_1(T)=G_1 X^\e_0(T).
\end{cases}
\end{equation*}
Applying It\^o's formula to $\langle Y_1,X^\e_0 \rangle$ gives
\vspace{-1mm}
\begin{align*}
J_1=\mE_t \int_{t}^{t+\e} \big\langle \big(  \mathbf{A}^\e -\mathbf{A}\big)^\top Y_1+  \big( \mathbf{C}^\e- \mathbf{C} \big)^\top  Z_1, X^\e \big\rangle ds.
\end{align*}
Set the ansatz
\vspace{-1mm}
\begin{align*}
\begin{cases}
Y_1=P_1 X^\e_0+ P_3,  \qq
dP_3=\Pi_3ds+V_3dW(s) & \mbox{ in }[t,T],\\
P_1(T)=G_1,\q P_3(T)=0,
\end{cases}
\end{align*}
with $P_1$ and  $P_3$ respectively solving 
\vspace{-1mm}
\begin{align*}
&\begin{cases}
\dot{P}_1=-\big( \mathbf{A}^\top P_1+ P_1 \mathbf{A} + \mathbf{C}^\top P_1 \mathbf{C} + Q_1+ \Theta_1^\top R_1 \Theta_1 \big), & \mbox{ in }[t,T],\\
P_1(T)=G_1,
\end{cases}
\\ \ns\ds
&\begin{cases}
dP_3=-\big[ \mathbf{A}^\top P_3 +\mathbf{C}^\top V_3+ \mathbf{C}^\top P_1 (\mathbf{C}^\e-\mathbf{C}) X^\e +  P_1(\mathbf{A}^\e-\mathbf{A}) X^\e \big]ds +V_3 dW(s), & \mbox{ in }[t,T],\\\ns\ds
P_2(T)=0.
\end{cases}
\end{align*}
Then
\begin{equation*}
Y_1=P_1X^\e_0+ P_3,\qq
Z_1=P_1\big[ \mathbf{C} X^\e_0+ (\mathbf{C}^\e-\mathbf{C}) X^\e \big] +V_3. 
\end{equation*}
Assuming $P_3 = \widetilde{P}_3 X^\varepsilon$ with $\widetilde{P}_3(T) = 0$, we obtain the equation for $\widetilde{P}_3$:
\vspace{-1mm}
\begin{align*}
\begin{cases}
\dot{\wt P}_3=-\big[ \mathbf{A}^\top \wt P_3 + \wt P_3 \mathbf{A}^\e + \mathbf{C}^\top \wt P_3 \mathbf{C}^\e + \mathbf{C}^\top P_1 (\mathbf{C}^\e-\mathbf{C})+ P_1 (\mathbf{A}^\e-\mathbf{A}) \big], & \mbox{ in }[t,T],\\
\wt P_3(T)=0,
\end{cases}
\end{align*}
and
$V_3=\wt P_3\mathbf{C}^\e X^\e$.
Consequently, 
\begin{equation*}
Y_1=P_1X^\e_0+\wt P_3X^\e,\qq
Z_1=P_1\big[ \mathbf{C} X^\e_0 +(\mathbf{C}^\e-\mathbf{C}) X^\e \big] + \wt P_3\mathbf{C}^\e X^\e. 
\end{equation*}
From estimates \eqref{pr-th1-est6}, \eqref{pr-th1-est3}, \eqref{pr-th1-est4}, boundedness of $P_1$, and Gr\"onwall's inequality, we obtain
\begin{equation*}
|\wt P_3(\cd)|_{C([t,T];\dbR^{n\times n})}\leq \cC(\Theta_1,\dbV) \e.
\end{equation*}
Then 
\vspace{-1mm}
\begin{equation}\label{J_1}
	\begin{aligned}
J_1&=\mE_t \int_{t}^{t+\e} \big\langle \big(  \mathbf{A}^\e-\mathbf{A} \big)^\top Y_1+  \big( \mathbf{C}^\e-\mathbf{C} \big)^\top  Z_1, X^\e \big\rangle ds \\
&= \mE_t \int_{t}^{t+\e} \!\!\Big \langle \big(  \mathbf{A}^\e\!-\!\mathbf{A} \big)^\top \big(P_1X^\e_0+\wt P_3X^\e\big)\!+\!  \big( \mathbf{C}^\e\!-\!\mathbf{C} \big)^\top \! \Big[P_1\big[ \mathbf{C} X^\e_0 \!+\!(\mathbf{C}^\e-\mathbf{C}) X^\e \big] \!+\! \wt P_3\mathbf{C}^\e X^\e\Big], X^\e \Big\rangle ds \\
&= \mE_t \int_{t}^{t+\e} \big\langle  ( \mathbf{C}^\e-\mathbf{C} )^\top  P_1 (\mathbf{C}^\e-\mathbf{C} )X^\e  , X^\e \big\rangle ds + 	 \cC(\Theta_1,\dbV)|\xi|^2o(\e). %\vspace{-2mm}
\end{aligned}
\end{equation}

For $J_2$, we introduce
\begin{equation*}
\begin{cases}
dY_2= -\big[ \mathbf{A}^\top Y_2 + \mathbf{C}^\top Z_2+ \big( Q_1+ \Theta_1^\top  R_1 \Theta_1\big)X \big]ds +Z_2 dW(s),\q \mbox{in } [t,T],\\
Y_2(T)= G_1 X(T).
\end{cases}
\end{equation*}
Applying It\^o's formula to $\langle Y_2,X^\e_0 \rangle $  gives
\begin{equation*}
\begin{aligned}
J_2= \mE_t \int_{t}^{t+\e} \big\langle  (\mathbf{A}^\e-\mathbf{A})^\top Y_2+ (\mathbf{C}^\e-\mathbf{C})^\top Z_2,X^\e \big\rangle ds.
\end{aligned}
\end{equation*}
Assuming $Y_2 = P_4 X$, we obtain the equation for $P_4$:
\vspace{-1mm}
\begin{equation*}
\begin{cases}
\dot{P}_4=-\big( \mathbf{A}^\top P_4+ P_4 \mathbf{A} + \mathbf{C}^\top P_4 \mathbf{C} + Q_1+ \Theta_1^\top R_1 \Theta_1 \big),\q \mbox{in } [t,T],\\
P_4(T)=G_1,
\end{cases}
\end{equation*}
and $Z_2 = P_4 \mathbf{C} X$. Clearly, $P_4(s) = P_1(s)$ for $s \in [t,T]$. Then we have 
\vspace{-1mm}
\begin{equation}\label{J_2}
	\begin{aligned}
J_2 &= \mE_t \int_{t}^{t+\e} \big\langle  (\mathbf{A}^\e-\mathbf{A})^\top Y_2+ (\mathbf{C}^\e-\mathbf{C})^\top Z_2,X^\e \big\rangle ds\\
&=  \mE_t \int_{t}^{t+\e} \big\langle  (\mathbf{A}^\e-\mathbf{A})^\top P_1 X+ (\mathbf{C}^\e-\mathbf{C})^\top P_1 \mathbf{C} X,X^\e \big\rangle ds.
\end{aligned}
\end{equation}

Utilizing estimates \eqref{pr-th1-est3}--\eqref{pr-th1-est4}, \eqref{J_1}--\eqref{J_2}, and the notations in \eqref{nota}, we obtain
\vspace{-1mm}
\begin{align*}
&\cJ_1(t,\xi; \Theta_1^\e X^\e,\Theta_2^*({\Theta}^\e_1) X^\e)- \cJ_1(t,\xi; \Theta_1 X,\Theta_2^*({\Theta}_1) X )\\
&=  \frac{1}{2}\mE_t \int_{t}^{t+\e}  \big\langle   \big[( \mathbf{C}^\e-\mathbf{C} )^\top P_1 (\mathbf{C}^\e-\mathbf{C}) + \dbV^\top R_1 \dbV \big]  X^\e,X^\e \big\rangle ds\\
&\q +  \mE_t \int_{t}^{t+\e} \big\langle  \big[  \dbV^\top R_1 \Theta_1+    (\mathbf{A}^\e-\mathbf{A})^\top P_1+ (\mathbf{C}^\e-\mathbf{C})^\top P_1 \mathbf{C}  \big] X ,X^\e \big\rangle ds+ \cC(\Theta_1,\dbV)|\xi|^2o(\e)  \\
&=   \frac{1}{2}\mE_t \int_{t}^{t+\e}  \big\langle    \dbV^\top \big[ \mathbf{D}(P_2)^\top P_1 \mathbf{D}(P_2)+ R_1\big] \dbV  X^\e,X^\e \big\rangle ds\\
&\q +  \mE_t \int_{t}^{t+\e} \big\langle   \dbV^\top  \big[   R_1 \Theta_1+    \mathbf{B}(P_2)^\top P_1+ \mathbf{D}(P_2)^\top P_1 \mathbf{C}  \big] X ,X^\e \big\rangle ds+ \cC(\Theta_1,\dbV)|\xi|^2o(\e)\\
&= \frac{1}{2} \int_{t}^{t+\e}  \big\langle    \dbV^\top \big[ \mathbf{D}(P_2)^\top P_1 \mathbf{D}(P_2)+ R_1\big] \dbV  \xi,\xi \big\rangle ds\\
&\q +    \int_{t}^{t+\e} \big\langle \dbV^\top  \big[  R_1 \Theta_1+    \mathbf{B}(P_2)^\top P_1+ \mathbf{D}(P_2)^\top P_1 \mathbf{C}  \big] \xi ,\xi \big\rangle ds+
\cC(\Theta_1,\dbV)|\xi|^2o(\e),
\end{align*}
where the last equality follows from
\vspace{-1mm}
\begin{equation*}
\mE_t \Big[ \sup_{s\in [t,t+\e]} |X(s)-\xi|^2  \Big]\leq \cC (\Theta_1) |\xi|^2\e, \quad
\mE_t \Big[ \sup_{s\in [t,t+\e]} |X^\e(s)-\xi|^2  \Big]\leq \cC (\Theta_1,\dbV) |\xi|^2\e.
\end{equation*}
Consequently, Definition \ref{De-closedloop} implies
\vspace{-1mm}
\begin{align*}
&\liminf_{\e\to 0+} \frac{\cJ_1\big(t,\xi;\bar{\Theta}^\e_1 X^\e,\Theta_2^*(\bar{\Theta}^\e_1) X^\e\big)-\cJ_1\big(t,\xi;\bar{\Theta}_1 \bar{X},\Theta_2^*(\bar{\Theta}_1) \bar{X} \big)}{\e}\\
&= \frac{1}{2} \big\langle  \big( \mathbf{D}(P_2)^\top P_1 \mathbf{D}(P_2)+ R_1\big) \dbV  \xi,\dbV \xi \big\rangle+ \big\langle   \big( R_1 \Theta_1+    \mathbf{B}(P_2)^\top P_1+ \mathbf{D}(P_2)^\top P_1 \mathbf{C}  \big) \xi, \dbV \xi \big\rangle \geq 0,  \, \as
\end{align*}
From the arbitrariness of $t,\xi,\dbV$, we know that $\Theta_1$ is a closed-loop equilibrium strategy if and only if
\begin{enumerate}
\item[(i)] $\mathbf{D}(P_2)^\top P_1 \mathbf{D}(P_2)+ R_1\geq 0$;
\item[(ii)] $ R_1 \Theta_1+    \mathbf{B}(P_2)^\top P_1+ \mathbf{D}(P_2)^\top P_1 \mathbf{C}=0$.
\end{enumerate}
However, by Assumption \ref{H2} and properties of Lyapunov differential equations \cite[Lemma 7.3]{Yong-Zhou1999}, we obtain $\mathbf{D}(P_2)^\top P_1 \mathbf{D}(P_2) + R_1 \geq \delta I$. Hence, condition (i) holds automatically. Thus we complete the proof.
\end{proof}

\vspace{-3mm}

\subsection{Proof of Theorem \ref{th2}}\label{Appendix-pr-th2}

\begin{proof}
We divide the proof into three steps.

{\bf Step 1.} In this step, we introduce Hamiltonians for the leader and follower and show local minimum property for $u^*$ and $v^*$ (e.g., \eqref{pr-th2-eq1},
\eqref{pr-th2-eq2}).

For $(t,x,\mu,\nu,p_1,q_1),(t,x,\mu,\nu,p_2,q_2)\in[0,T]\times \dbR^n\times \dbR^{m_1}\times \dbR^{m_2}\times \dbR^n\times\dbR^{n\times n}$, define
\vspace{-2mm}
\begin{equation*}
\begin{aligned}
H_1(t,x,\mu ,\nu ,p_1,q_1)&=p_1^\top \big[A(t) x+B_1(t)\mu+B_2 \nu\big]+ \frac{1}{2} x^\top Q_1(t) x+ \frac{1}{2} \mu^\top R_1(t) \mu \\&\q+ \frac{1}{2}\mathrm{Tr}\Big\{q_1 \big[C(t)x+D_1(t)\mu+D_2(t)\nu\big]\big[C(t)x+D_1(t)\mu+D_2(t)\nu\big]^\top \Big\},\\
H_2(t,x,\mu,\nu,p_2,q_2)&=p_2^\top \big[A(t) x+B_1(t)\mu+B_2 \nu\big]+ \frac{1}{2} x^\top Q_2(t) x+ \frac{1}{2} \nu^\top R_2(t) \nu \\&\q+ \frac{1}{2}\mathrm{Tr}\Big\{q_2 \big[C(t)x+D_1(t)\mu+D_2(t)\nu\big]\big[C(t)x+D_1(t)\mu+D_2(t)\nu\big]^\top \Big\},
\end{aligned}
\end{equation*}
where $H_1$ and $H_2$ are the Hamiltonians for the leader and follower, respectively.

Fix $(t,x,\mu)\in [0,T]\times \dbR^n\times \dbR^{m_1}$. Substituting $p_2=P_2(t)x$, $q_2=P_2(t)$ into $H_2$ and minimizing over $\nu$ yields the unique minimizer
\vspace{-2mm}
\begin{align*}
&\mathrm{arg} \min_{\nu\in\dbR^{m_2}} H_2\big(t,x,\mu,\nu,P_2(t)x,P_2(t)\big)
\\&=  - \big[ \big( R_2+ D_2^{\top} P_2 D_2 \big)^{-1} \big( B_2^\top P_2+ D_2^\top P_2 C \big)\big](t)\,x -\big[\big( R_2+ D_2^{\top} P_2 D_2 \big)^{-1}  D_2^\top P_2 D_1 \big](t) \,\mu.
\end{align*}
By \eqref{v*}, $v^*(t,x,\mu)$ equals this minimizer, so 
\vspace{-1mm}
\begin{align}\label{pr-th2-eq1}
H_2\big(t,x,\mu,v^*(t,x,\mu),P_2(t)x,P_2(t)\big) \leq H_2\big(t,x,\mu,\nu,P_2(t)x,P_2(t)\big),\q \forall \nu\in \dbR^{m_2}.
\end{align}

Next, fix $(t,x) \in [0,T] \times \mathbb{R}^n$. Substituting $p_1 = P_1(t)x$, $q_1 = P_1(t)$, and $v = v^*(t,x,\mu)$ into $H_1$ and minimizing over $\mu$ gives 
\vspace{-2mm}
\begin{align*}
&\mathrm{arg} \min_{\mu\in\dbR^{m_1}} H_1\big(t,x,\mu,v^*(t,x,\mu),P_1(t)x,P_1(t)\big)
\\&= -\Big(\big[R_1\!+\! \mathbf{D}(P_2)^\top P_1 \mathbf{D}(P_2)  \big]^{-1} \big\{  \mathbf{B}(P_2)^\top  P_1\!+\! \mathbf{D}(P_2)^\top P_1  \big[ C \! -\!D_2 \big( R_2\!+\! D_2^{\top} P_2 D_2 \big)^{-1} \big( B_2^\top P_2\!+\! D_2^\top P_2 C \big)\big]   \big\} \Big) (t) x.\vspace{-1mm}
\end{align*}
From \eqref{bar-Theta1} and \eqref{u*}, $u^*(t,x)$ coincides with this minimizer, hence
\vspace{-1mm}
\begin{align}\label{pr-th2-eq2}
H_1\big(t,x,u^*(t,x),v^*(t,x,u^*(t,x)),P_1(t)x,P_1(t)\big) \leq H_1\big(t,x,\mu,v^*(t,x,\mu),P_1(t)x,P_1(t)\big), \q \forall \mu \in \mathbb{R}^{m_1}.
\end{align}

{\bf Step 2.} In this step, we prove that $(u^*,v^*)$ is a feedback Stackelberg equilibrium as defined in \eqref{De-feed-Sta}.

Fix the leader's strategy $u^*(s,x)$ and let the follower choose any $v\in\mathcal{V}$ (defined in \eqref{feed-Sta-control}). For any initial pair $(t,\xi) \in [0,T] \times \mathbb{R}^n$, denote by $X(\cdot)$ the corresponding state trajectory. Applying It\^o's formula to $\frac12 X(s)^\top P_2(s)X(s)$ over $[t,T]$ gives 
\vspace{-2mm}
\begin{align*}
&\frac{1}{2}\mE_t \big[ X(T)^\top G_2 X(T)\big]-\frac{1}{2} \mE_t\big[ \xi^\top P_2(t)\xi \big]\\
&= \mE_t \int_{t}^{T} \Big\{ \frac{1}{2} X^\top \dot{P}_2 X+ X^\top P_2 \big[AX+ B_1u^* +B_2 v(s,X,u^*) \big] \\
&\qq\q + \frac{1}{2}\mathrm{Tr}\big[ P_2\big(CX+D_1 u^*+D_2 v(s,X,u^*) \big) \big( CX+D_1 u^*+D_2 v(s,X,u^*) \big)^\top \big]  \Big\}ds.
\end{align*}
Using \eqref{pr-th2-eq1} and the $P_2$-equation in \eqref{ERE-2}, we obtain 
%
%\vspace{-2mm}
\begin{align*}
&\frac{1}{2} \mE_t\big[ \xi^\top P_2(t)\xi \big]\\ & =\cJ_2(t,\xi;u^*,v(u^*)) -\mE_t \int_{t}^{T} \Big[\frac{1}{2} X^\top \dot{P}_2 X + H_2\big(s,X,u^*(s,X),v(s,X,u^*(s,X)),P_2X,P_2 \big) \Big] ds\\
&\leq \cJ_2(t,\xi;u^*,v(u^*))-\mE_t \int_{t}^{T} \Big[\frac{1}{2} X^\top \dot{P}_2 X + H_2\big(s,X,u^*(s,X),v^*(s,X,u^*(s,X)),P_2X,P_2 \big) \Big] ds\\
&=\cJ_2(t,\xi;u^*,v(u^*))-\mE_t \int_{t}^{T} \Big[\frac{1}{2} X^\top \dot{P}_2 X + H_2\big(s,X,\bar{\Theta}_1X,\Theta_2^*(\bar{\Theta}_1)X,P_2X,P_2 \big) \Big] ds\\
&=\cJ_2(t,\xi;u^*,v(u^*)).%\vspace{-1mm}
\end{align*}
Equality holds when $v=v^*$, and then $\mathcal{J}_2(t,\xi;u^*,v^*(u^*)) = \tfrac12\mathbb{E}_t[\xi^\top P_2(t)\xi]$. Thus, 
%
%\vspace{-1mm}
\begin{equation*}
\cJ_2(t,\xi;u^*,v^*(u^*))\leq \cJ_2(t,\xi;u^*,v(u^*)), \q \forall v\in \cV.
\end{equation*}
Similarly, we can show
\vspace{-1mm}
\begin{align*}
\cJ_1(t,x;u^*,v^*(u^*))\leq \cJ_1(t,x;u,v^*(u)),\q \forall u\in \cU.
\end{align*}
This completes the proof.
\end{proof}

\vspace{-3mm}

\subsection{Proof of Theorem \ref{th3}}\label{Appendix-pr-th3}

\begin{proof}
We prove the result using Lemma \ref{lm2.3}. Rewrite the ERE \eqref{ERE-2}--\eqref{Th1-Th2} in column form: 
\vspace{-1mm}
\begin{equation}
x(t)=\begin{pmatrix}
\operatorname{vec}(P_1(t))\\\operatorname{vec}(P_2(t))
\end{pmatrix},\q \dot{x}(t)=\begin{pmatrix}
f_1(t,x(t))\\f_2(t,x(t))
\end{pmatrix}\vspace{-1mm}
\end{equation}
and choose the domain $\cD = [0,T] \times \{ x \in \mathbb{R}^{2n^2} \mid |x| \leq 3 \mathcal{C}_{\mathrm{pri}} \}$. The proof is divided into two steps.

{\bf Step 1.} In this step, we establish Lipschitz estimates for $f_1$ and $f_2$ over the domain $\cD$.

For any $P_i,P_i'\in\mathbb{S}_+^n$ ($i=1,2$) satisfying 
\vspace{-1mm}
$$
\max\left\{\left |\begin{pmatrix} \operatorname{vec}(P_1)\\\operatorname{vec}(P_2) \end{pmatrix}\right| , \left |\begin{pmatrix} \operatorname{vec}(P_1')\\\operatorname{vec}(P_2') \end{pmatrix}\right| \right\}\leq  3 \mathcal{C}_{\mathrm{pri}},\vspace{-1mm}
$$
direct computation yields
\vspace{-1mm}
\begin{equation}\label{pr-th3-eq1}
\begin{cases}
\ds|\mathbf{B}(P_2)-\mathbf{B}(P'_2)|\leq  \mathcal{C}(\mathcal{C}_{\mathrm{pri}}) |P_2-P_2'|\leq \mathcal{C}(\mathcal{C}_{\mathrm{pri}}) | \operatorname{vec}(P_2)-\operatorname{vec}(P_2')|,\\
\ns\ds |\mathbf{D}(P_2)-\mathbf{D}(P'_2)|\leq  \cC(\mathcal{C}_{\mathrm{pri}}) |P_2-P_2'|\leq \mathcal{C}(\mathcal{C}_{\mathrm{pri}}) |\operatorname{vec}(P_2)-\operatorname{vec}(P_2')|,
\end{cases}\vspace{-1mm}
\end{equation}
where $\mathcal{C}(\mathcal{C}_{\mathrm{pri}})$ is a generic constant depending on $\mathcal{C}_{\mathrm{pri}}$ and system parameters.
Define
\vspace{-1mm}
\begin{align*}
\!\!\begin{cases}
\bar{\Theta}_1\!=\!-\big[R_1\!+\! \mathbf{D}(P_2)^\top P_1 \mathbf{D}(P_2)  \big]^{-1}\! \big\{  \mathbf{B}(P_2)^\top  P_1\!+\! \mathbf{D}(P_2)^\top P_1  \big[ C \! -\!D_2 \big( R_2\!+\! D_2^{\top}\! P_2 D_2 \big)^{-1} \big( B_2^\top P_2\!+\! D_2^\top P_2 C \big)\big]   \big\},\\
\ns\ds\bar{\Theta}_1'\!=\!-\big[R_1\!+\! \mathbf{D}(P_2')^\top P_1' \mathbf{D}(P_2')  \big]^{-1}\! \big\{  \mathbf{B}(P_2')^\top  P_1'\!+\! \mathbf{D}(P_2')^\top P_1'  \big[ C\!-\!D_2 \big( R_2\!+\! D_2^{\top}\! P_2' D_2 \big)^{-1} \big( B_2^\top P_2'\!+\! D_2^\top P_2' C \big)\big]   \big\}.
\end{cases}\vspace{-1mm}
\end{align*}
From \eqref{pr-th3-eq1}, we obtain
\vspace{-1mm}
\begin{equation}\label{pr-th3-eq2}
\begin{aligned}
|\bar{\Theta}_1-\bar{\Theta}_1' |\leq \cC(\mathcal{C}_{\mathrm{pri}}) \big(|\operatorname{vec}(P_1)-\operatorname{vec}(P_1')|+|\operatorname{vec}(P_2)-\operatorname{vec}(P_2')| \big),
\end{aligned}\vspace{-1mm}
\end{equation}
which together with \eqref{Th1-Th2} implies
\begin{equation}\label{pr-th3-eq3}
\begin{aligned}
|\Theta_2^*(\bar{\Theta}_1)-\Theta_2^*(\bar{\Theta}_1' )|\leq \cC(\mathcal{C}_{\mathrm{pri}}) \big(|\operatorname{vec}(P_1)-\operatorname{vec}(P_1')|+|\operatorname{vec}(P_2)-\operatorname{vec}(P_2')| \big).
\end{aligned}
\end{equation}
Set $x={\small\begin{pmatrix}
\operatorname{vec}(P_1)\\\operatorname{vec}(P_2)
\end{pmatrix}}$ and $y={\small\begin{pmatrix}
\operatorname{vec}(P_1')\\\operatorname{vec}(P_2')
\end{pmatrix}}$. From estimates \eqref{pr-th3-eq2}--\eqref{pr-th3-eq3}, direct computation gives
\begin{equation}\label{pr-th3-eq4}
\begin{aligned}
|f_i(t,x)\!-\!f_i(t,y)|\leq \cC(\mathcal{C}_{\mathrm{pri}}) \big(|\operatorname{vec}(P_1)\!-\!\operatorname{vec}(P_1')|+|\operatorname{vec}(P_2)\!-\!\operatorname{vec}(P_2')| \big)\leq \cC(\mathcal{C}_{\mathrm{pri}}) |x-y|,\, (i=1,2).
\end{aligned}
\end{equation}

{\bf Step 2.} In this step, we apply Lemma~\ref{lm2.3} to establish the global well-posedness of the ERE \eqref{ERE-2}--\eqref{Th1-Th2}.

With the Lipschitz estimate \eqref{pr-th3-eq4}, all conditions of Lemma~\ref{lm2.3} are satisfied. Hence, there exists $\delta>0$ such that the ERE \eqref{ERE-2}--\eqref{Th1-Th2} admits a unique solution on $[T-\delta,T]$, and this solution can be continued until it reaches the boundary  
%
%\vspace{-1mm}
\begin{equation}
\begin{aligned}
\partial \cD=\bigl(\{0,T\}\times \big\{x\in\mathbb{R}^{2n^2}\mid |x|\le 3\cC_{\mathrm{pri}}\big\}\bigr)\,\cup\,\bigl((0,T)\times \big\{x\in\mathbb{R}^{2n^2}\mid |x|=3\cC_{\mathrm{pri}}\big\}\bigr).
\end{aligned}%\vspace{-1mm}
\end{equation}
Since the estimate \eqref{th3-eq1} gives $\sup_{t\in[0,T]}|x(t)|\leq \mathcal{C}_{\mathrm{pri}}$, the solution cannot reach the spatial boundary $(0,T) \times \{ x \in \mathbb{R}^{2n^2} \mid |x| = 3 \mathcal{C}_{\mathrm{pri}} \}$; the only possible boundary it may hit is $\{0,T\} \times \{ x \in \mathbb{R}^{2n^2} \mid |x| \leq 3 \mathcal{C}_{\mathrm{pri}} \}$. Consequently, the solution can be uniquely extended to the entire interval $[0,T]$. 
\end{proof}

\vspace{-3mm}

\subsection{Proof of Theorem \ref{th4}}\label{Appendix-pr-th4}

To prove Theorem \ref{th4}, we first establish a technical lemma.
\begin{lemma}\label{lm5.1}
Let $B_2(\cdot)\in C^1([0,T];\mathbb R^{n\times m_2})$ and $R_2(\cdot)\in C^1([0,T];\mathbb R^{m_2\times m_2})$.
Assume there exist constants $\delta>0$ and $v>0$ such that for all $s\in[0,T]$
\vspace{-2mm}
$$
R_2(s)\geq \delta I_{m_2},\qquad S(s):=B_2(s)R_2(s)^{-1}B_2(s)^\top \geq v I_n.\vspace{-1mm}
$$
Define
$R(s):=S(s)^{1/2}$ and $T(s):=R(s)^{-1}$.
Then $S(\cd),R(\cd),T(\cd) \in C^1([0,T];\mathbb R^{n\times n})$, with
\vspace{-2mm}
\begin{equation*}
|R'(s)|\le \frac{|\operatorname{vec}(S(s)')|}{2\sqrt{v}},\q |T'(s)|\le \frac{|\operatorname{vec}(S(s)')|}{2 v^{3/2}}.\vspace{-1mm}
\end{equation*}
\end{lemma}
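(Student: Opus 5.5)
The plan is to treat $S$, $R$, $T$ in turn. First, $S=B_2R_2^{-1}B_2^\top$ is $C^1$ because $R_2\ge\delta I$ is $C^1$ and invertible. Hence $R_2^{-1}$ is $C^1$, with $(R_2^{-1})'=-R_2^{-1}R_2'R_2^{-1}$, and products of $C^1$ functions are $C^1$. Next, $S(s)$ is symmetric with $S(s)\ge vI_n$, so its spectrum lies in $[v,\infty)$. The square root map is real-analytic on the open cone of positive definite matrices; this follows from the holomorphic functional calculus, or from the implicit mapping theorem, Lemma \ref{lm2.4}, applied to $F(X,Y)=X^2-Y$. Composing, $R=S^{1/2}$ is $C^1$. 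Finally, $T=R^{-1}$ is $C^1$ by smoothness of inversion, with $T'=-TR'T$.

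For the derivative bound on $R$, differentiate $R(s)^2=S(s)$ to get the Sylvester equation
\[
R(s)R'(s)+R'(s)R(s)=S'(s).
\]
By Lemma \ref{lm2.1}, this becomes $(I_n\otimes R+R^\top\otimes I_n)\operatorname{vec}(R')=\operatorname{vec}(S')$. The coefficient matrix $\mathcal K:=I_n\otimes R+R\otimes I_n$ is symmetric, since $R$ is symmetric and Lemma \ref{lm2.2}(i) applies. By Lemma \ref{lm2.2}(ii), its eigenvalues are $\lambda_i+\lambda_j$, where $\lambda_i\ge\sqrt v$ are the eigenvalues of $R$. So $\mathcal K\ge 2\sqrt v\,I_{n^2}$, hence $\mathcal K$ is invertible and $|\mathcal K^{-1}|\le 1/(2\sqrt v)$. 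It follows that
\[
|R'|=|\operatorname{vec}(R')|\le \frac{|\operatorname{vec}(S')|}{2\sqrt v},
\]
using the Frobenius norm, for which $|M|=|\operatorname{vec}(M)|$. This same argument also gives differentiability of $R$ directly via Lemma \ref{lm2.4}, since $D_XF(R,S)[H]=RH+HR$ is an isomorphism. This yields a $C^1$ local branch, and it agrees with the positive square root by uniqueness and continuity.

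For $T$, use $T'=-TR'T$ together with $|T|_{\rm op}\le 1/\sqrt v$, because $R\ge\sqrt v I$. Submultiplicativity in the mixed form $|AB|_F\le|A|_{\rm op}|B|_F$ then gives
\[
|T'|\le \frac1v\,|R'|\le \frac{|\operatorname{vec}(S')|}{2v^{3/2}}.
\]

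The only delicate point is norm bookkeeping. I will interpret $|\cdot|$ for matrices as the Frobenius norm, matching the vec-notation, and use operator norms for the factors $T$. The regularity of the square root is standard, so this is the main care point rather than a real obstacle.
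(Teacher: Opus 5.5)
Your proposal is correct and follows essentially the same route as the paper: you get $C^1$ regularity of $S$ from $(R_2^{-1})'=-R_2^{-1}R_2'R_2^{-1}$, regularity of $S^{1/2}$ from the implicit mapping theorem with $F(X,Y)=X^2-Y$, the bound on $R'$ from the vectorized Sylvester equation with $I_n\otimes R+R\otimes I_n\ge 2\sqrt{v}\,I_{n^2}$, and the bound on $T'$ from $T'=-TR'T$. You also make two points explicit that the paper leaves implicit: the implicit-function branch coincides with the positive square root by continuity, and the factors $T$ must be measured in the operator norm, since a Frobenius bound would introduce an extra factor $n$.
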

\begin{proof}
Since $R_2(s)\geq \delta I_{m_2}>0$ and $R_2(\cdot)\in C^1([0,T];\mathbb R^{m_2\times m_2})$, we have $R_2(\cd)^{-1}\in C^1([0,T];\mathbb R^{m_2\times m_2})$ and
\vspace{-2mm}
$$
\frac{d}{ds}\big(R_2(s)^{-1}\big)=-R_2(s)^{-1}R_2'(s)R_2(s)^{-1},\vspace{-1mm}
$$
which together with $B_2(\cdot)\in C^1([0,T];\mathbb R^{n\times m_2})$ implies $S(\cd)\in C^1([0,T];\mathbb R^{n\times n})$.
From $S(s)\geq v I_n$, we know that $R(s)=S(s)^{1/2}$ and $T(s)= S(s)^{-1}=S(s)^{-1/2}$ are well defined.

We now prove that $R(\cd)\in C^1([0,T];\mathbb R^{n\times n})$.
Consider  $F: \mathbb{R}^{n \times n} \times \mathbb{R}^{n \times n} \to \mathbb{R}^{n \times n}$ defined by $
F(X, Y) = X^2 - Y$,
whose partial Fr\'echet derivative with respect to $X$ at  $(X, Y)$ is given by
\vspace{-2mm}
\begin{equation*}
D_X F(X, Y)[H] = \lim_{\epsilon \to 0} \frac{(X+\epsilon H)^2 - X^2}{\epsilon} = X H + H X.\vspace{-1mm}
\end{equation*}
Clearly, for any $s\in [0,T]$, we have $F(R(s),S(s))=0$ and $D_X F(R(s), S(s))[H]=R(s)H+HR(s)$.
Next, we show that the mapping $D_X F(R(s), S(s)): H \mapsto R(s)H + H R(s)$ is an isomorphism. First, it is clear that $D_X F(R(s), S(s)): \mathbb{R}^{n \times n} \to \mathbb{R}^{n \times n}$ is linear. Second, consider the following Sylvester equation: 
\vspace{-1mm}
\begin{equation}\label{pr-lm2.5-eq1}
R(s)H+HR(s)= Z.\vspace{-1mm}
\end{equation}
By Lemma \ref{lm2.1}, equation \eqref{pr-lm2.5-eq1} is equivalent to
\vspace{-1mm}
\begin{equation}
\big[I_n \otimes R(s) + R(s) \otimes I_n\big] \operatorname{vec}(H) = \operatorname{vec}(Z).\vspace{-1mm}
\end{equation}
From Lemma \ref{lm2.2}, we know that $I_n \otimes R(s) + R(s) \otimes I_n$ is symmetric and satisfies $I_n \otimes R(s) + R(s) \otimes I_n \geq 2\sqrt{\nu} I_{n^2}$. Hence, the mapping $D_X F(R(s), S(s))$ is a linear bijection on a finite-dimensional space and therefore an isomorphism. Consequently, Lemma \ref{lm2.4} implies that the mapping $Y \mapsto Y^{1/2}$ is continuously differentiable in a neighborhood of $S(s)$ for any $s \in [0,T]$. This, together with $S(\cdot) \in C^1([0,T]; \mathbb{R}^{n \times n})$, implies that the  function $R(s) = (S(s))^{1/2}$ is continuously differentiable on $[0,T]$.

Differentiating $R(s)^2=S(s)$ gives $R(s)'R(s)+R(s)R(s)'=S(s)'$, which via Lemma \ref{lm2.1} becomes
\begin{equation}
\big[I_n \otimes R(s) + R(s) \otimes I_n\big] \operatorname{vec}(R(s)') = \operatorname{vec}(S(s)').
\end{equation}
Hence
\vspace{-1mm}
\begin{align}\label{pr-lm5.1-eq2}
\big|\operatorname{vec}(R(s)')\big|=\Big|\big[ I_n \otimes R(s) + R(s) \otimes I_n\big]^{-1}\operatorname{vec}(S(s)') \Big|\leq \frac{|\operatorname{vec}(S(s)')|}{2\sqrt{v}}.
\end{align}
Since $R(s)\geq\sqrt{\nu}I_n$ and $T(s)=R(s)^{-1}$, $T(\cdot)$ is $ C^1$ and $T(s)' = -R(s)^{-1}R(s)'R(s)^{-1}$, so 
\vspace{-1mm}
\begin{equation*}
|T(s)'|\le \big|R(s)^{-1}\big|^2 |R(s)'|\le \frac{|\operatorname{vec}(S(s)')|}{2 v^{3/2}}.
\end{equation*}
This completes the proof.
\end{proof}

Now we are in a position to prove Theorem \ref{th4}.
\begin{proof}
We divide the proof into four steps.

{\bf Step 1.} In this step, we derive a preliminary estimate.

From \eqref{Th1-Th2}, we have
\vspace{-1mm}
\begin{equation}\label{pr-th4-eq1}
\begin{aligned}
&  2\bar{\Theta}_1^\top \big[ R_1+ \mathbf{D}(P_2)^\top P_1 \mathbf{D}(P_2)  \big]\bar{\Theta}_1 +  \bar{\Theta}_1^\top \mathbf{B}(P_2)^\top P_1 + P_1 \mathbf{B}(P_2) \bar{\Theta}_1 \\
& + \bar{\Theta}_1^\top \mathbf{D}(P_2)^\top P_1 \big[ C- D_2 (R_2+D_2^\top P_2 D_2)^{-1} (B_2^\top P_2+ D_2^\top P_2 C)  \big] \\
&+   \big[ C- D_2 (R_2+D_2^\top P_2 D_2)^{-1} (B_2^\top P_2+ D_2^\top P_2 C)  \big]^\top P_1 \mathbf{D}(P_2) \bar{\Theta}_1=0.
\end{aligned}\vspace{-1mm}
\end{equation}
Substituting \eqref{nota} and \eqref{pr-th4-eq1} into the $P_1$-equation yields
\vspace{-1mm}
\begin{equation}\label{pr-th4-eq7}
	\begin{aligned}
\dot{P}_1&=\!-\big\{\big[  A^\top\!\! -\! \big(P_2B_2 \!+\! C^\top P_2 D_2\big) \big(R_2\!+\! D_2^\top P_2 D_2\big)^{-1}\! B_2^\top  \big] P_1\! \\&\qq 
+\! P_1 \big[  A\!-\! B_2\big(R_2\!+\!D_2^\top\! P_2 D_2\big)^{-1} \big(B_2^\top P_2 \!+\!D_2^\top P_2 C\big) \big] \\
&\qq  +  \big[ C\!-\! D_2\big(R_2\!+\!D_2^\top P_2 D_2\big)^{-1}\! \big(B_2^\top P_2\! +\!D_2^\top P_2 C\big) \big]^\top\! \\&
\qq\times P_1\big[ C\!-\! D_2\big(R_2\!+\!D_2^\top P_2 D_2\big)^{-1}\! \big(B_2^\top P_2 \!+\!D_2^\top P_2 C\big) \big]\\
&\qq  +Q_1  - \bar{\Theta}_1^\top \big[R_1+ \mathbf{D}(P_2)^\top P_1 \mathbf{D}(P_2) \big] \bar{\Theta}_1  \big\}.
\end{aligned}\vspace{-1mm}
\end{equation}
By Assumption \ref{H2} and standard properties of Lyapunov differential equations (see, e.g., \cite[Lemma 7.3]{Yong-Zhou1999}), if the ERE \eqref{ERE-2}--\eqref{Th1-Th2} admits a solution $(P_1, P_2)$, then it necessarily holds that 
\vspace{-1mm}
\begin{equation}\label{pr-th4-eq2}
P_1(s)\geq 0,\q P_2(s)\geq 0, \q \forall s\in [0,T].\vspace{-1mm}
\end{equation}
From \eqref{pr-th4-eq2} and Assumption \ref{H2}, we have $\bar{\Theta}_1^\top \big[R_1+ \mathbf{D}(P_2)^\top P_1 \mathbf{D}(P_2) \big] \bar{\Theta}_1 \geq \delta I$,
which leads to 
\vspace{-1mm}
\begin{equation}\label{pr-th4-eq3}
	\begin{aligned}
\dot{P}_1& \!\geq \!-\big\{  \big[  A^\top \!-\! \big(P_2B_2 \!+\! C^\top P_2 D_2\big) \big(R_2\!+\! D_2^\top P_2 D_2\big)^{-1}\! B_2^\top\!  \big] P_1 \\&\qq
+ P_1 \big[  A\!-\! B_2\big(R_2\!+\!D_2^\top\! P_2 D_2\big)^{-1}\! \big(B_2^\top P_2 \!+\!D_2^\top\! P_2 C\big) \big] \\
&\qq  + \! \big[ C\!-\! D_2\big(R_2\!+\!D_2^\top P_2 D_2\big)^{-1} \big(B_2^\top P_2\! +\!D_2^\top\! P_2 C\big) \big]^\top  \\ &\qq
\times P_1\big[ C\!-\! D_2\big(R_2\!+\!D_2^\top\! P_2 D_2\big)^{-1}\! \big(B_2^\top P_2 \!+\!D_2^\top P_2 C\big) \big] \!+\! Q_1 \big\}.
\end{aligned}\vspace{-1mm}
\end{equation}

{\bf Step 2.} In this step, we establish a priori estimates for {\bf Case (i)}.

In the one-dimensional case with $|D_2|\geq \delta>0$, using Assumption \ref{H2}, we can directly obtain
\vspace{-1mm}
\begin{align}\label{pr-th4-eq4}
\big|(R_2+D_2^\top P_2 D_2)^{-1} (B_2^\top P_2+ D_2^\top P_2 C)\big| \leq \max\left\{ \frac{|B_2|+|D_2||C|}{\delta},\frac{|B_2|+|D_2||C|}{\delta^2} \right\},\q \forall P_2\geq 0.\vspace{-1mm}
\end{align}
From \eqref{pr-th4-eq3},  we have
\vspace{-1mm}
\begin{align*}
P_1(t)&\leq G_1+\int_{t}^{T} \Big\{  \big[  A^\top - \big(P_2B_2 + C^\top P_2 D_2\big) \big(R_2+ D_2^\top P_2 D_2\big)^{-1} B_2^\top  \big] P_1\\
&\qq \qq \qq+ P_1 \big[  A- B_2\big(R_2+D_2^\top P_2 D_2\big)^{-1} \big(B_2^\top P_2 +D_2^\top P_2 C\big) \big] \\
&\qq \qq \qq+  \big[ C- D_2\big(R_2+D_2^\top P_2 D_2\big)^{-1} \big(B_2^\top P_2 +D_2^\top P_2 C\big) \big]^\top  \\
& \qq \qq\qq \times  P_1 \big[ C- D_2\big(R_2+D_2^\top P_2 D_2\big)^{-1} \big(B_2^\top P_2 +D_2^\top P_2 C\big) \big] +Q_1 \Big\}ds.\vspace{-1mm}
\end{align*}
This, together with \eqref{pr-th4-eq4}, implies
\vspace{-1mm}
\begin{equation*}
|P_1(t)|\leq \cC + \int_{t}^{T} \big[\cC |P_1(s)| + \cC \big]ds,\vspace{-1mm}
\end{equation*}
where $\mathcal{C}$ is a generic constant depending only on the system parameters. By Gr\"onwall's inequality, we obtain the boundedness of $P_1$.

For $P_2$, note that
\vspace{-1mm}
\begin{align*}
\dot{P}_2&=- \big( A+B_1 \bar{\Theta}_1\big)^\top P_2 - P_2(A+B_1 \bar{\Theta}_1)- \big(C+D_1 \bar{\Theta}_1\big)^\top P_2 \big(C+D_1 \bar{\Theta}_1\big) - Q_2\\\ns\ds
&\q +\big[P_2B_2+ (C+D_1 \bar{\Theta}_1)^\top P_2 D_2 \big] \big[R_2+ D_2^\top P_2 D_2\big]^{-1} \big[B_2^\top P_2+ D_2^\top P_2 (C+D_1\bar{\Theta}_1) \big],
\end{align*}
where
\vspace{-1mm}
\begin{align*}
\bar{\Theta}_1&=-\big[R_1\!+\! \mathbf{D}(P_2)^\top P_1 \mathbf{D}(P_2)  \big]^{-1} \big\{  \mathbf{B}(P_2)^\top  P_1\!+\! \mathbf{D}(P_2)^\top P_1  \big[ C \! -\!D_2 \big( R_2\!+\! D_2^{\top}\! P_2 D_2 \big)^{-1} \big( B_2^\top P_2\!+\! D_2^\top P_2 C \big)\big]   \big\},\vspace{-1mm}
\end{align*}
and
\vspace{-1mm}
\begin{align*}
& \mathbf{B}(P_2):= B_1 -B_2(R_2+ D_2^\top P_2 D_2)^{-1} D_2^\top P_2 D_1,\qq \mathbf{D}(P_2):= D_1 -D_2(R_2+ D_2^\top P_2 D_2)^{-1} D_2^\top P_2 D_1.
\end{align*}
Similar to \eqref{pr-th4-eq4}, we can directly obtain
\vspace{-2mm}
\begin{equation} \label{pr-th4-eq5}
\begin{aligned}
&|\mathbf{B}(P_2)|\leq |B_1|+ |B_2| \max\left \{\frac{|D_1||D_2|}{\delta},\frac{|D_1||D_1|}{\delta^2} \right\},\q \forall P_2\geq 0,\\
&|\mathbf{D}(P_2)|\leq |D_1|+ |D_2| \max\left \{\frac{|D_1||D_2|}{\delta},\frac{|D_1||D_1|}{\delta^2} \right\},\q \forall P_2\geq 0.
\end{aligned}\vspace{-1mm}
\end{equation}
From \eqref{pr-th4-eq4}, \eqref{pr-th4-eq5} and the boundedness of $P_1$, we obtain the boundedness of $\bar{\Theta}_1$.
Since $P_2$ satisfies
\vspace{-1mm}
\begin{align*}
\dot{P}_2 \geq- \big( A+B_1 \bar{\Theta}_1\big)^\top P_2 - P_2(A+B_1 \bar{\Theta}_1)- (C+D_1 \bar{\Theta}_1)^\top P_2 (C+D_1 \bar{\Theta}_1) - Q_2,
\end{align*}
we obtain the boundedness of $P_2$ from the boundedness of $\bar{\Theta}_1$ and Gr\"onwall's inequality.

\ss

{\bf Step 3.}  In this step, we establish a priori estimates for {\bf Case (ii)}.

In the one-dimensional case with $D_2 = 0$, we have
\vspace{-1mm}
\begin{equation*}
\mathbf{B}(P_2)=B_1,\q \mathbf{D}(P_2)=D_1, \q \bar{\Theta}_1=-\big( R_1+ D_1^\top P_1 D_1 \big)^{-1} \big( B_1^\top P_1 + D_1^\top P_1 C \big).\label{pr-th4-eq6}\vspace{-1mm}
\end{equation*}
Moreover, estimate \eqref{pr-th4-eq3} reduces to
\vspace{-1mm}
\begin{equation*}
\dot{P}_1 \geq -\big[ \big(  A^\top - P_2B_2  R_2^{-1} B_2^\top  \big) P_1 + P_1 \big(  A - B_2R_2^{-1}  B_2^\top P_2  \big)   +   C ^\top P_1  C  +Q_1 \big].\vspace{-1mm}
\end{equation*}
Since $
P_i(\cd)\geq 0$ for $i=1,2$, and
$B_2R_2^{-1}B_2^\top \geq 0$,
we have
\vspace{-1mm}
\begin{equation*}
\dot{P}_1 \geq -\big(  A^\top  P_1 + P_1  A      +   C ^\top P_1  C  +Q_1 \big).\vspace{-1mm}
\end{equation*}
By Gr\"onwall's inequality, we obtain the boundedness of $P_1$, which also leads to the boundedness of $\bar{\Theta}_1$. Following the same procedure as in Step 2, we obtain the boundedness of $P_2$.

{\bf Step 4.} We establish a priori estimates for {\bf Case (iii)}.

In the high-dimensional case with $D_2 = 0$, the equation for $P_1$ (e.g., \eqref{pr-th4-eq7}) reduces to 
\vspace{-1mm}
\begin{equation*}
\dot{P}_1  = -\big[ \big( A^\top - P_2 B_2 R_2^{-1} B_2^\top \big) P_1 + P_1 \big( A - B_2 R_2^{-1} B_2^\top P_2 \big) + C^\top P_1 C + Q_1 - \bar{\Theta}_1^\top \big(R_2+ D_1^\top P_1 D_1 \big)\bar{\Theta}_1 \big].\vspace{-1mm}
\end{equation*}
Set $S(s)=B_2(s)R_2(s)^{-1}B_2(s)^\top$. Since $R_2(\cdot)\geq\delta I$ and $\operatorname{Rank}(B_2(t))=n$, $S(s)$ is positive definite. By continuity, $\exists\nu>0$ such that $S(s)\geq\nu I$. Lemma \ref{lm5.1} gives $S^{1/2},S^{-1/2}$ is $ C^1$. The equation for $P_1$ becomes
\vspace{-1mm}
\begin{align}
\dot{P}_1 &= P_2S P_1+ P_1 S P_2 -\big[  A^\top P_1 + P_1  A  + C^\top P_1 C + Q_1 - \bar{\Theta}_1^\top \big(R_2+ D_1^\top P_1 D_1 \big)\bar{\Theta}_1 \big].\vspace{-1mm}
\end{align}
Now we divide the proof of this part into {\bf Step (i)} to {\bf Step (iii)}.

\ss

{\bf Step (i).}   Let $\Phi(t,s)$ solve $\partial_s\Phi(s,t) = -S(s)P_2(s)\Phi(s,t)$ with $\Phi(t,t)=I$. By variation of constants formula,
\vspace{-1mm}
\begin{eqnarray}\label{pr-th4-eq11}
P_1(t) \! = \!\Phi(T, t)^\top\! G_1 \Phi(T, t) \!+\! \int_{t}^{T}\!\! \Phi(s, t)^\top  \big[ A^\top\! P_1 \!+\!P_1A \!+\! C^\top P_1 C\!+\! Q_1 \!-\! \bar{\Theta}_1^\top \big(R_2\!+\! D_1^\top P_1 D_1 \big)\bar{\Theta}_1 \big]  \Phi(s, t) ds.\vspace{-1mm}
\end{eqnarray}

{\bf Step (ii).}  Define $\Pi(s,t)=S(s)^{-1/2}\Phi(s,t)$. Then $\Phi(s,t)=S(s)^{1/2}\Pi(s,t)$, so boundedness of $\Phi$ is equivalent to boundedness of $\Pi$. Differentiating  $\Pi(s, t)$ with respect to $s$, we obtain 
\vspace{-1mm}
\begin{align*}
\frac{\partial \Pi (s,t)}{\partial s}  = \dot{T}(s) \Phi(s, t) + T(s) \frac{\partial \Phi(s,t)}{\partial s}  = \big[ \dot{T}(s)T^{-1}(s) - S^{1/2}(s) P_2(s) S^{1/2}(s) \big] \Pi(s, t),\vspace{-1mm}
\end{align*}
where $T(s)=S(s)^{-1/2}$.
From the variation of constants formula,  
\vspace{-1mm}
\begin{equation}\label{pr-th4-eq9}
\begin{aligned}
\Pi(s, t) &= \Psi(s, t) \Pi(t, t) + \int_{t}^{s} \Psi(s, \tau) \dot{T}(\tau)T^{-1}(\tau) \Pi(\tau, t) d\tau\\
&= \Psi(s, t) T(t) + \int_{t}^{s} \Psi(s, \tau) \dot{T}(\tau)T^{-1}(\tau) \Pi(\tau, t) d\tau
\end{aligned}\vspace{-1mm}
\end{equation}
where $\Psi$ is the fundamental solution of $\partial_s\Psi(s,t) = -S(s)^{1/2}P_2(s)S(s)^{1/2}\Psi(s,t)$, $\Psi(t,t)=I$. Let $V(s,t)=\Psi(s,t)^\top\Psi(s,t)$. Then 
$$\partial_s\operatorname{Tr}(V) = -2\operatorname{Tr}(\Psi^\top S^{1/2}P_2S^{1/2}\Psi) \leq 0,$$
which implies
\begin{equation*}
\begin{aligned}
\mathrm{Tr} (V(s,t))\leq \mathrm{Tr} (V(t,t))=\mathrm{Tr} (I)=n, \q \forall 0\leq t\leq s \leq T,
\end{aligned}
\end{equation*}
and
\begin{equation*}
|\Psi(s,t)|\leq \sqrt{n},\q \forall 0\leq t\leq s \leq T.
\end{equation*}
Combining this with \eqref{pr-th4-eq9},  the boundedness of $T(\cd),T(\cd)^{-1},\dot{T}(\cd)$, and Gr\"onwall's inequality, we obtain
\vspace{-1mm}
\begin{equation}\label{pr-th4-eq10}
\max\{|\Pi(s,t)|,|\Phi(s,t)|\}\leq \cC,\q \forall 0\leq t\leq s \leq T.
\end{equation}

\textbf{Step (iii).} In this step, we show the boundedness of $P_1$ and $P_2$.

Since $-\bar{\Theta}_1^\top(R_2+D_1^\top P_1D_1)\bar{\Theta}_1\leq0$, from the expression for $P_1(t)$ and the boundedness of $\Phi$ we obtain 
\begin{equation*}
\begin{aligned}
|P_1(t)|&\leq \cC|G_1| +\cC \int_{t}^{T} \big[ |P_1|(|A|+|C|^2) + |Q_1|\big] ds \leq \cC + \int_{t}^{T} \big( \cC|P_1| + \cC \big) ds,
\end{aligned}%\vspace{-1mm}
\end{equation*}
which implies the boundedness of $P_1$ by Gr\"onwall's inequality. Similar to Step 3, we can also obtain the boundedness of $P_2$.
\end{proof}

\end{document}